\newtheorem{theo}{Theorem}[section]
\newtheorem{pro}[theo]{Proposition}
\newtheorem{lem}[theo]{Lemma}
\newtheorem{cor}[theo]{Corollary}
\newcommand{\ra}{\rightarrow}
\theoremstyle{definition}
\newtheorem{defin}[theo]{Definition}
\newtheorem{exa}{Example}[section]
\theoremstyle{remark}
\newtheorem{rem}[theo]{Remark}
\begin{document}

\title{On some random walks driven by spread-out measures}
\author{Laurent Saloff-Coste\thanks{%
Both authors partially supported by NSF grant DMS 1004771} \\
{\small Department of Mathematics}\\
{\small Cornell University} \and Tianyi Zheng \\
{\small Department of Mathematics}\\
{\small Stanford  University} }
\maketitle

\begin{abstract}
Let $G$ be a finitely generated group equipped with a symmetric generating $%
k $-tuple $S$. Let $|\cdot|$ and $V$ be the associated word length and
volume growth function. Let $\nu$ be a probability measure such that $%
\nu(g)\simeq [(1+|g|)^2V(|g|)]^{-1}$. We prove that if $G$ has polynomial
volume growth then $\nu^{(n)}(e) \simeq V(\sqrt{n\log n})^{-1}$. We also
obtain assorted estimates for other spread-out probability measures.
\end{abstract}

\noindent{\bf 2010 MSC:} 20F65, 60J10, 60J51 \vspace{.1in}

\noindent{\bf Key words:} Random walk, polynomial volume growth,
local limit theorems

\section{Introduction}

\setcounter{equation}{0}

This work is concerned with questions related to a number of recent studies
where ``stable-like'' processes and random walks are considered. We
focus on random walks on groups, mostly nilpotent groups and groups of
polynomial volume growth, associated with various type of spread-out
probability measures. Here, spread-out is used in a non-technical sense 
to convey the idea that these
measures do not have finite support.

Given a probability measure $\nu$ on a (finitely generated) group $G$,
we consider the discrete time random walk $(X_n)_0^\infty$ driven by $\nu$
and started at $X_0=e$. This means that $X_n=\xi_1\dots\xi_n$, $n\ge 1$, 
where $(\xi)_1^\infty$ is a i.i.d.\ sequence of $G$-random variables 
with common law $\nu$.
The distribution of $X_n$ is the convolution power 
$\nu^{(n)}$.  We also consider the associated continuous time random walk
$X_t$ whose distribution is given by 
$$p_t(g)=e^{-t}\sum_0^\infty \frac{t^n}{n!}\nu^{(n)}(g).$$
This continuous time process will serve as a tool in the study 
of the discrete random walk driven by $\nu$, a technique that 
has been used by many authors before.

The question addressed in the present work is the following. Assuming good upper 
bounds on $\mu^{(n)}(e)$, under which circumstances can one prove matching 
lower bounds? Further, can one describe (in a certain sense)
the region  where, for a given $n$, $\nu^{(n)}(g)\simeq \nu^{(n)}(e)$? 
We provide answers for measures $\nu$ that are quite natural and for 
which well understood existing techniques are insufficient and/or 
need to be modified.

\subsection{Main definitions}

\begin{defin}
\label{norm} We say that $\|\cdot\|: G\rightarrow [0,\infty)$ is a norm on $%
G $ if $\|g\|=0$ if and only if $g=e$ and, for all $g,h\in G$, $\|gh\|\le
\|g\|+\|h\|$. Given a norm $\|\cdot\|$, we say that $V(r)=\#\{g\in G:
\|g\|\le r\}$ is the associated volume function.
\end{defin}

The simplest and most common example is of a norm is provided by the 
word-length associated to a given finite symmetric set of generators. We 
will encounter other norms as well. 

The properties studied in this work  are the following.

\begin{defin}
\label{controlled} Let $\mu$ be a symmetric probability measure on a group $%
G $. Let $\|\cdot\|$ be a norm with volume function $V$. Let $r:
(0,\infty)\rightarrow (0,\infty),\; t\mapsto r(t)$, be a non-decreasing
function. Let $(X_n)_0^\infty$ be the random walk on $G$ driven by $\mu$%
. We say that $\mu$ is $(\|\cdot\|,r)$-controlled if the following
properties are satisfied:

\begin{enumerate}
\item For all $n$, $\mu^{(2n)}(e)\simeq V(r(n))^{-1}.$

\item For all $\epsilon >0$ there exists $\gamma\in (0,\infty)$ such that 
\begin{equation*}
\mathbf{P}_e\left(\sup_{0\le k\le n}\{\|X_k\|\}\ge \gamma r(n)\right)\le
\epsilon .
\end{equation*}
\end{enumerate}
\end{defin}
The first of these two properties is rather straightforward and 
self-explanatory. It provides a two-sided estimate for the probability of 
return of the random walk. In more general contexts, this property is also 
known as a two-sided ``on-diagonal'' bound. The second property 
is related to the first in so far as it actually easily implies the lower bound 
$\mu^{(2n)}(e)\ge V(cr(n))^{-1}$. It also provides a weak control 
of the behavior of  $\mu^{(n)}(g)$ away from the neutral element $e$.

\begin{defin}
\label{controlled*} Let $\mu$ be a symmetric probability measure on a group $%
G$. Let $\|\cdot\|$ be a norm with volume function $V$. Let $r:
(0,\infty)\rightarrow (0,\infty),\; t\mapsto r(t)$, be an increasing continuous
function with inverse $\rho$. Let $(X_n)_0^\infty$ denote the random walk on 
$G$ driven by $\mu$. We say that $\mu$ is strongly $(\|\cdot\|,r)$%
-controlled if the following properties are satisfied:

\begin{enumerate}
\item There exists $C\in (0,\infty)$ and, for any $\kappa>0$, there exists $%
c(\kappa)>0$ such that, for all $n\ge 1$ and $g$ with $\|g\|\le \kappa r(n)$%
, 
\begin{equation*}
c(\kappa)V(r(n)))^{-1} \le \mu^{(2n)}(g)\le C V(r(n))^{-1}.
\end{equation*}

\item There exists $\epsilon, \gamma_1, \gamma_2\in (0,\infty)$, $%
\gamma_2\ge 1$, such that, for all $n,\tau$ such that $\frac{1}{2}%
\rho(\tau/\gamma_1)\le n \le \rho(\tau/\gamma_1) $ 
\begin{equation}  \label{strong}
\inf_{x:\|x\|\le \tau }\left\{\mathbf{P}_x\left(\sup_{0\le k\le n}
\{\|X_k\|\}\le \gamma_2 \tau; \|X_n\|\le \tau \right)\right\}\ge \epsilon .
\end{equation}
\end{enumerate}
\end{defin}

Strong control implies the following useful estimate. The last section 
of this paper gives an application of this estimate to random walks 
on wreath products.

\begin{pro}
\label{pro-StC} Assume that $r$ is continuous increasing with inverse $\rho$
and that the symmetric probability measure $\mu$ is strongly $(\|\cdot\|, r)$%
-controlled. Then, for any $n$ and $\tau$ such that $\gamma_1 r(2n)\ge \tau$%
, we have 
\begin{equation}  \label{strong*}
\inf_{x:\|x\|\le \tau}\left\{\mathbf{P}_x\left(\sup_{0\le k\le
n}\{\|X_k\|\le \gamma_2 \tau; \|X_n\|\le \tau\right)\right\}\ge \epsilon^{1+
2n/\rho(\tau/\gamma_1) }.
\end{equation}
\end{pro}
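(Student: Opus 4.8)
The plan is to reduce the general case to the range of times already covered by the second strong-control property, via the Markov property, by cutting the time interval $[0,n]$ into consecutive blocks whose lengths all fall in the admissible window. Throughout, write $m=\rho(\tau/\gamma_1)$. Since $r$ is increasing with inverse $\rho$, the hypothesis $\gamma_1 r(2n)\ge\tau$ is equivalent to $2n\ge m$, i.e.\ $n\ge m/2$; and the range of times for which \eqref{strong} applies with this same $\tau$ is exactly $n'\in[m/2,m]$.

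First I would fix a decomposition $n=n_1+\dots+n_j$ into positive integers with each $n_i\in[m/2,m]$ and $j\le 1+2n/m$. Such a decomposition exists precisely because $n\ge m/2$: if $n\le m$ a single block suffices, and otherwise one cuts $[0,n]$ into $j=\lceil n/m\rceil$ nearly equal pieces, so that the common length lies in $(m/2,m]$ and the lower bound $n>jm/2$ gives $j\le 2n/m$. Setting checkpoint times $t_0=0$ and $t_i=n_1+\dots+n_i$, introduce the block events
$$B_i=\Big\{\sup_{t_{i-1}\le k\le t_i}\|X_k\|\le\gamma_2\tau;\ \|X_{t_i}\|\le\tau\Big\},\qquad i=1,\dots,j.$$
Because the intervals $[t_{i-1},t_i]$ cover $[0,n]$ and $t_j=n$, on the intersection $\bigcap_{i=1}^{j}B_i$ one has $\sup_{0\le k\le n}\|X_k\|\le\gamma_2\tau$ and $\|X_n\|\le\tau$. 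Hence $\bigcap_i B_i$ is contained in the event of \eqref{strong*}, and it suffices to bound $\mathbf{P}_x(\bigcap_i B_i)$ from below for $\|x\|\le\tau$.

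Next I would peel off the blocks one at a time. Conditioning on $\mathcal{F}_{t_{i-1}}$, the Markov property gives that the conditional probability of $B_i$ equals $h_i(X_{t_{i-1}})$, where $h_i(y)=\mathbf{P}_y\big(\sup_{0\le k\le n_i}\|X_k\|\le\gamma_2\tau;\ \|X_{n_i}\|\le\tau\big)$. On the $\mathcal{F}_{t_{i-1}}$-measurable event $\bigcap_{i'<i}B_{i'}$ the checkpoint constraint forces $\|X_{t_{i-1}}\|\le\tau$, while $n_i\in[m/2,m]$ is an admissible time; so \eqref{strong} yields $h_i(X_{t_{i-1}})\ge\epsilon$ there. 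Multiplying by $\mathbf{1}_{\bigcap_{i'<i}B_{i'}}$ and taking expectations gives the recursion $\mathbf{P}_x(\bigcap_{i'\le i}B_{i'})\ge\epsilon\,\mathbf{P}_x(\bigcap_{i'<i}B_{i'})$. Applying this for $i=j,j-1,\dots,2$ and using the base case $\mathbf{P}_x(B_1)\ge\epsilon$ (valid since $\|x\|\le\tau$ and $n_1$ is admissible) produces $\mathbf{P}_x(\bigcap_i B_i)\ge\epsilon^{\,j}$. Finally, as $0<\epsilon\le1$ and $j\le 1+2n/m=1+2n/\rho(\tau/\gamma_1)$, we get $\epsilon^{\,j}\ge\epsilon^{\,1+2n/\rho(\tau/\gamma_1)}$, which is \eqref{strong*}.

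The substantive content is the Markov peeling, which is routine once the blocks are in place. The only genuine obstacle is the combinatorial bookkeeping of the decomposition: guaranteeing simultaneously that every block length is an \emph{integer} lying in the closed window $[m/2,m]$ and that the number of blocks does not exceed $1+2n/\rho(\tau/\gamma_1)$. I expect this to be handled exactly by the $+1$ in the exponent, which permits one extra (possibly short) block and thereby absorbs all rounding at the endpoints of the window; should one wish to avoid even this, one could instead enlarge the admissible window slightly at the cost of adjusting the constants $\gamma_2$ and $\epsilon$.
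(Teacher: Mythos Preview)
Your proof is correct and takes essentially the same approach as the paper, which organizes the identical Markov-peeling argument as an induction on $\ell\ge 1$ (covering $1\le 2n/\rho(\tau/\gamma_1)<\ell+1$), removing a single block of length $n-n'=\lceil\rho(\tau/\gamma_1)/2\rceil$ at each step rather than fixing the full decomposition in advance. That specific block size---the smallest admissible integer---cleanly resolves the integer bookkeeping you flagged, since it automatically lies in $[m/2,m]$ and the remainder $n'$ falls back into the range of the inductive hypothesis.
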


\begin{proof} By induction on $\ell\ge 1$ such that
$1\le   2n/\rho( \tau /\gamma_1) < (\ell+1)$, we are going to prove that
$$
\inf_{x:\|x\|\le \tau}\left\{\mathbf 
P_x\left(\sup_{0\le k\le n}\{\|X_k\|\le \gamma_2 \tau; \|X_n\|\le  
\tau\right)\right\}\ge \epsilon^{1+ \ell }.$$
This easily yields the desired result. For $\ell=1$, the inequality 
follows from the strong control assumption. 
Assume the property holds for some $\ell\ge 1$. Let $n,\tau$ be such that
$(\ell+1)\le 2n/\rho(\tau/\gamma_1) < (\ell+2)$. 
Choose $n'$ such that 
$n-n'= \lceil \rho(\tau/\gamma_1)/2\rceil $
and note that $ 2 n'\in [1, (\ell+1)\rho(\tau/\gamma_1))$.
Write  $Z_{n}= \sup_{k\le n}\{\|X_k\|\}$ and, for any $x$ 
such that $\|x\|\le \tau$,
\begin{eqnarray*}
\lefteqn{\mathbf 
P_x\left( Z_n\le \gamma_2 \tau;\|X_n\|\le \tau\right)}&&\\
&\ge &
\mathbf P_x\left( Z_n\le \gamma_2 \tau;\|X_{n'}\|\le\tau;
 \|X_n\|\le \tau\right)\\
&\ge & \mathbf P_x\left( Z_{n'}\le \gamma_2 \tau;\|X_{n'}\|\le\tau;
\sup_{n'\le k\le n}\{\|X_k\|\}\le \gamma_2 \tau;
 \|X_n\|\le \tau\right)\\
&=&
\mathbf E_x\left( \mathbf 1_{\{ Z_{n'}\le \gamma_2 \tau;\|X_{n'}\|\le\tau\}}\mathbf P_{X_{n'}}\left( Z_{n-n'}\le \gamma_2 \tau;
 \|X_{n-n'}\|\le \tau\right)\right)\\
&\ge &
\epsilon 
\mathbf P_x\left(Z_{n'}\le \gamma_2 \tau;\|X_{n'}\|\le\tau\right)
\ge \epsilon ^{2+\ell}.
\end{eqnarray*}
This gives the desired property for $\ell+1$.
\end{proof}

\subsection{Word-length radial measures}

Let the group $G$ be equipped with a generating $k$-tuple 
\begin{equation*}
S=(s_1,\dots, s_k)
\end{equation*}
and the associated finite symmetric set of generators $\mathcal{S}=\{s^{\pm
1}_1,\dots, s^{\pm 1}_k\}$. Let $|g|$ be the associated word length, that
is, the minimal $k$ such that $g= u_1\dots u_k$ with $u_i\in \mathcal S$, $1\le i\le
k $. By definition, the identity element $e$ has length $0$. Hence, $|\cdot|$
is a norm and $(x,y)\rightarrow |x^{-1}y|$ is a left-invariant distance
function on $G$. Let 
\begin{equation*}
V_S(r)= \#\{G: |g|\le r\}
\end{equation*}
be the volume of the ball of radius $r$. We say that $G$ has polynomial
volume growth of degree $D$ if  $%
V_S(r)\simeq r^D$ in the sense that the ratio $V_S(r)/r^D$ is bounded away
from $0$ and $\infty$ for $r\ge 1$. Finitely generated nilpotent groups have
polynomial volume growth and, by Gromov's theorem, any finitely generated
group with polynomial volume growth contains a nilpotent subgroup of finite
index. More precisely, any finitely generated group $G$ such that 
there exist constant s $C,A$ and a sequence $n_k$ with $V(n_k)\le Cn_k^A$
contains a nilpotent subgroup of finite
index and thus has polynomial volume growth of degree $D$ for some integer $D$.
See, e.g., \cite{delaH}.
\begin{exa}
Let $G$ be equipped with a word-length function $|\cdot|$ associated with a
symmetric finite generating subset. Assume that $G$ has polynomial volume
growth. The main results of \cite{HSC} imply that, for any symmetric
probability measure $\mu$ with finite generating support, $\mu$ is strongly $%
(|\cdot|,t\mapsto\sqrt{t})$-controlled. The main results of \cite{BBK,BGK}
show that, if $\nu_\beta$ is symmetric and satisfies $\nu_\beta(g)\simeq
[(1+|g|)^{\beta}V(|g|)]^{-1}$ with $\beta\in (0,2)$, then $\nu_\beta$ is
strongly $(|\cdot|,t\mapsto t^{1/\beta})$-controlled. See also \cite%
{BSClmrw,MSC}.
\end{exa}

One example that motivates the  present work is the case of the measure
$$\nu_2(g)= \frac{c}{(1+|g|)^2V(|g|)}.$$
Can one provides good estimates for $\nu_2^{(n)}(e)$ on groups 
of polynomial volume growth? The following theorem gives a very 
satisfactory answer to this question and covers not only this particular 
example but the full range of cases passing through  the classical threshold 
corresponding to the second moment condition. 
\begin{theo}\label{th-phi}
Let $G$ be equipped with a word-length function $|\cdot|$ associated with a
symmetric finite generating subset. Let $V$ be the associated volume function 
and assume that $G$ has polynomial volume
growth.  Let $\phi:[0,\infty)\ra [1,\infty)$ be a continuous regularly varying function of positive index. Let $r$ be the inverse function of 
$$t\mapsto  t^2/\int_0^t \frac{sds}{\phi(s)}.$$
Let $\nu_\phi$ be a symmetric probability measure such that 
\begin{equation}\label{nuphi}
\nu_\phi(g)\simeq \frac{1}{\phi(|g|)V(|g|)}.
\end{equation}
Then $\nu_\phi$ is strongly $(|\cdot|,r)$-controlled.
\end{theo}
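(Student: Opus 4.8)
The plan is to verify the two properties of Definition \ref{controlled*} by reducing, scale by scale, to the well-understood finite-range walk and by isolating the effect of the large jumps of $\nu_\phi$. Throughout, I would organize the analysis around the two profile functions attached to $\phi$: the truncated second moment $\Phi(\ell)=\int_0^\ell \frac{s\,ds}{\phi(s)}$ and the time scale $\ell^2/\Phi(\ell)$, which is precisely the inverse $\rho$ of $r$, so that $n=\rho(r(n))$ reads $n\,\Phi(r(n))\simeq r(n)^2$. Since $\phi$ is regularly varying of positive index, so are $\Phi$ and $\rho$, and the whole argument rests on the balance between the jump tail $\sum_{|g|\ge\ell}\nu_\phi(g)\simeq 1/\phi(\ell)$ and the truncated second moment $\sum_{|g|\le\ell}|g|^2\nu_\phi(g)\simeq\Phi(\ell)$, which match exactly at $\ell=r(n)$. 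I would use freely that the finite-range walk $\mu$ is strongly $(|\cdot|,\sqrt{\cdot})$-controlled (the HSC input recalled in the Example) and that regular variation lets me compare $\phi,\Phi,\rho$ at comparable scales up to multiplicative constants.

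For the on-diagonal upper bound in property (1) I would truncate $\nu_\phi$ at scale $\ell=r(n)$, writing $\nu_\phi=\nu^{\le\ell}+\nu^{>\ell}$. The truncated walk has jumps bounded by $\ell$ and per-step second moment $\simeq\Phi(\ell)$, so the Nash inequality for its Dirichlet form yields the diffusive bound $\tilde p_t(e)\lesssim V(\sqrt{t\Phi(\ell)})^{-1}$; at $t=n$, $\ell=r(n)$ this gives $\tilde p_n(e)\lesssim V(r(n))^{-1}$ because $n\Phi(r(n))\simeq r(n)^2$. The contribution of $\nu^{>\ell}$ is then absorbed by Meyer's decomposition, using that the expected number of jumps larger than $r(n)$ up to time $n$ is $\simeq n/\phi(r(n))=O(1)$ by regular variation. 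This transfers the estimate to the full walk, so $\nu_\phi^{(2n)}(e)\lesssim V(r(n))^{-1}$, and the uniform bound $\nu_\phi^{(2n)}(g)\le\nu_\phi^{(2n)}(e)$ is automatic from symmetry and Cauchy--Schwarz.

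For the lower bounds and property (2) I would first prove a confinement estimate: splitting the increments into small jumps ($\le\gamma r(n)$, controlled by a second-moment/maximal inequality with variance $\simeq n\Phi(\gamma r(n))\simeq C(\gamma) r(n)^2$) and large jumps (with expected number $\simeq n/\phi(\gamma r(n))$ made small by taking $\gamma$ large), one obtains $\mathbf P_e(\sup_{k\le n}|X_k|\le C r(n))\ge 1/2$. Cauchy--Schwarz then gives the on-diagonal lower bound $\nu_\phi^{(2n)}(e)\ge c\,V(r(n))^{-1}$. The decisive step is to upgrade this to the near-diagonal lower bound $\nu_\phi^{(2n)}(g)\ge c(\kappa)V(r(n))^{-1}$ for $|g|\le\kappa r(n)$. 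Granting it, property (2) follows cleanly: since $\tau\simeq\gamma_1 r(n)$ on the relevant time window, I would fix $\gamma_1$ (hence $\kappa\simeq 2\gamma_1$) so that every $y$ with $|y|\le\tau$ satisfies $|x^{-1}y|\le\kappa r(n)$, whence $\mathbf P_x(|X_n|\le\tau)\ge c(\kappa)V(\tau)/V(r(n))\ge c_0>0$, and then choose $\gamma_2$ large enough that $\mathbf P_x(\sup_{k\le n}|X_k|>\gamma_2\tau)\le c_0/2$, giving \eqref{strong} with $\epsilon=c_0/2$.

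I expect the near-diagonal lower bound to be the main obstacle. It amounts to a regularity (oscillation / Hölder continuity) estimate for $\nu_\phi$-caloric functions, in the spirit of the De Giorgi--Nash--Moser and Bass--Levin theory for jump processes used in \cite{BBK,BGK}. The genuine difficulty is that those arguments are designed for the strictly sub-diffusive range (index below $2$) with clean power scaling, whereas Theorem \ref{th-phi} must pass through and beyond the classical second-moment threshold, where $\Phi$ is slowly varying (the $\sqrt{n\log n}$ regime) and the truncated second moment is dominated by the cutoff scale itself. The plan is therefore to run the iteration relative to the intrinsic scale $r$ and the profile $\Phi$, controlling all ratios of $\phi,\Phi,\rho$ at comparable scales by regular variation, and feeding in the confinement and on-diagonal estimates above as the base of the iteration.
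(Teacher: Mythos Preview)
Your tightness/confinement argument (truncate at scale $r(n)$, control the small-jump part by a second-moment bound and the large-jump part by Meyer) matches the paper's Proposition~\ref{tightness}, and your identification $\delta_R\simeq 1/\phi(R)$, $\mathcal G(R)\simeq\Phi(R)$ is exactly what the paper records. So the passage from an on-diagonal upper bound to $(|\cdot|,r)$-control is common ground.

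The substantive divergence is in how one obtains the sharp on-diagonal upper bound and, above all, the near-diagonal lower bound. The paper does \emph{not} run any De~Giorgi--Nash--Moser or Bass--Levin regularity iteration. Instead it establishes a \emph{pointwise pseudo-Poincar\'e inequality}
\[
\|f(\cdot\,g)-f\|_2^2\le C\,\rho(|g|)\,\mathcal E_{\nu_\phi}(f,f),
\]
and Theorem~\ref{theo-PP} shows that this single inequality, combined with the two-sided on-diagonal bound, immediately gives $\nu_\phi^{(n)}(g)\simeq V(r(n))^{-1}$ for $|g|\le\eta\,r(n)$; Proposition~\ref{pro-SC} then upgrades control to strong control. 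Both the pseudo-Poincar\'e inequality and the sharp upper bound $\nu_\phi^{(n)}(e)\le C\,V(r(n))^{-1}$ are imported from \cite{SCZ-nil} via an auxiliary measure $\mu_{S,\phi}$ supported on the powers of the generators: one proves them for $\mu_{S,\phi}$ and transfers to $\nu_\phi$ by the Dirichlet-form comparison $\mathcal E_{\mu_{S,\phi}}\le C\,\mathcal E_{\nu_\phi}$. This detour through $\mu_{S,\phi}$ is precisely what carries the argument through the index-$2$ threshold; the paper stresses (Section~1.3) that the critical case ``makes essential use'' of those results. Incidentally, your Nash claim for the truncated walk does not explain why the Nash constant scales like $1/\Phi(\ell)$ uniformly in $\ell$; that is again exactly what the pseudo-Poincar\'e inequality supplies.

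Your proposed route to the near-diagonal lower bound is where there is a genuine gap. The oscillation/H\"older iterations of \cite{BBK,BGK} are built around the strict power scaling $r(t)=t^{1/\beta}$ with $\beta<2$, and the mechanism that drives them---matching tail mass, truncated second moment, and exit-time lower bounds across a geometric sequence of scales---does not carry over unchanged when $\Phi$ is slowly varying (the $\sqrt{n\log n}$ regime) or when $\phi$ has index at least~$2$. Saying you will ``run the iteration relative to the intrinsic scale $r$'' and ``control all ratios by regular variation'' is a hope rather than an argument: at minimum you would need a scale-uniform Harnack or Krylov--Safonov estimate for the family of truncated kernels, and none is available in this regime. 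The pseudo-Poincar\'e route bypasses the entire issue, and that is the tool you should reach for.
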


\begin{exa} 
Assume that $\phi(t)=(1+t)^\beta \ell(t)$ with $\ell$ positive 
continuous and slowly varying (we refer the reader to \cite[Chap. I]{BGT}
for the definition and basic properties of slowly and regularly varying 
functions. The scaling function $r$ of Theorem \ref{th-phi}
can be described more explicitly as follows.
\begin{itemize}
\item If $\beta>2$, $r(t)\simeq t^{1/2}$.
\item If $\beta < 2$, we
have $t^2/\int_0^t \frac{sds}{\phi(s)} \simeq  c_\phi\,
 \phi(t)$ and 
$r$ is essentially the inverse of $\phi$, namely,
$$r(t)\simeq t^{1/\beta} \ell_{\#}^{1/\beta}(t^\beta) $$
where $\ell_{\#}$ is the de Bruijn conjugate of $\ell$. 
See \cite[Prop.\ 1.5.15]{BGT}. For example, if $\ell$ has the property that 
$\ell(t^a)\simeq \ell(t)$ for all $a>0$ then $\ell_{\#}\simeq 1/\ell$.
\item The case $\beta=2$ is more subtle and the proof is more difficult. 
The function  
$\psi:t\mapsto \int_0^t\frac{sds}{\phi(s)}$ is slowly varying and satisfies
$\psi(t)\ge \frac{c_1}{\ell(t)}$. For example, 
if $\ell\equiv 1$, we have $\psi(t)\simeq \log t$
and $r(t)\simeq (t\log t)^{1/2}$. When 
$\ell(t)=(\log t)^\gamma$ with $\gamma\in \mathbb R$ then
\begin{itemize}
\item If $\gamma>1$,  $\psi(t)\simeq 1$ and  $r(t)\simeq  t^{1/2}$;
\item If $\gamma=1$,  $\psi(t)\simeq \log \log t$ and  $r(t)\simeq  
(t\log\log t)^{1/2}$;
\item If $\gamma <1$,  $\psi(t)\simeq (\log t)^{1-\gamma}$ and  
$r(t)\simeq  (t (\log t)^{1-\gamma})^{1/2}$;
\end{itemize}\end{itemize}
\end{exa}

\subsection{Measures supported by the powers of the generators}

In the critical case when $\phi$ is regularly varying of index $2$ and 
$\nu_\phi$ has infinite second moment (i.e., $\sum |g|^2\nu_\phi(g)=\infty$), 
the proof of Theorem \ref{th-phi} 
makes essential use of some of the results from 
\cite{SCZ-nil} which are related to variations on 
the following  class of examples.
Recall that $G$ is equipped with the generating $k$-tuple $%
S=(s_1,\dots,s_k)$. For any $k$-tuple $a=(\alpha_1,\dots,\alpha_k)\in
(0,\infty)^k$, and consider the probability measure $\mu_{S,a}$ supported on
the powers of the generators $s_1,\dots,s_k$ and defined by 
\begin{equation}  \label{muSa}
\mu_{S,a}(g)=\frac{1}{k}\sum_1^k \sum_{m\in \mathbb{Z}} \frac{\kappa_i}{%
(1+|m|)^{1+\alpha_i}} \mathbf{1}_{s_i^m}(g).
\end{equation}

Set 
\begin{equation*}
\widetilde{\alpha}_i=\min\{\alpha_i,2\} \;\mbox{ and }\;\; \alpha_*=\max\{%
\widetilde{\alpha}_i, 1\le i\le k\}.
\end{equation*}
Define 
\begin{equation}  \label{normSa}
\|g\|_{S,a}= \min\left\{ r: g= \prod _{j=1}^m s_{i_j}^{\epsilon_j}:
\epsilon_j=\pm 1,\;\; \#\{j: i_j =i\}\le r^{\alpha_*/\widetilde{\alpha}%
_i}\right\}.
\end{equation}
Note that $g\mapsto \|g\|_{S,a}: G\rightarrow [0,\infty)$ is a norm.
Consider also the measure 
\begin{equation}  \label{nuSa}
\nu_{S,a,\beta}(g)= \frac{c(G,a,\beta)}{(1+\|g\|_{S,a})^\beta V_{S,a}
(\|g\|_{S,a})}
\end{equation}
with $\beta\in (0,2)$.

Under the  assumption that $G$ is nilpotent and $\{s_i: \alpha_i\in
(0,2)\}$ generates a subgroup of finite index in $G$, it is proved in \cite%
{SCZ-nil} that there exists a positive real $D_{S,a}$ such that 
\begin{equation*}
Q_{S,a}(r)=\#\{\|g\|_{S,a}\le r^{1/\alpha_*}\}\simeq r^{D_{S,a}}
\end{equation*}
and 
\begin{equation*}
\mu_a^{(n)}(e)\le C_{S,a} n ^{-D_{S,a}},\;\;\nu_{S,a,\beta}^{(n)}(e)\le
C_{S,a,\beta}n^{-\alpha_*D(S,a)/\beta}.
\end{equation*}
Here we prove the following complementary result.

\begin{theo}
\label{th-muSa} Let $G$ be a finitely generated nilpotent group equipped
with a generating $k$-tuple $S=(s_1,\dots,s_k)$. Referring to the notation
introduced above, fix $a\in (0,\infty)^k$ and assume that $\{s_i:
\alpha_i\in (0,2)\}$ generates a subgroup of finite index in $G$.

\begin{itemize}
\item The probability measure $\mu_{S,a}$ is strongly $(\|\cdot\|_{S,a},t%
\mapsto t^{1/\alpha_*})$-controlled.

\item For any $\beta\in (0,2)$, $\nu_{S,a,\beta}$ is strongly $%
(\|\cdot\|_{S,a},t\mapsto t^{1/\beta })$-controlled.
\end{itemize}
\end{theo}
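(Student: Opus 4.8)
The plan is to verify, for each of the two measures, the two defining properties of strong $(\|\cdot\|_{S,a},r)$-control in Definition \ref{controlled*}, with $r(t)=t^{1/\alpha_*}$ for $\mu_{S,a}$ and $r(t)=t^{1/\beta}$ for $\nu_{S,a,\beta}$. Two observations reduce the workload. First, the on-diagonal upper bounds $\mu_{S,a}^{(n)}(e)\le C n^{-D_{S,a}}$ and $\nu_{S,a,\beta}^{(n)}(e)\le C n^{-\alpha_* D_{S,a}/\beta}$ are quoted from \cite{SCZ-nil}, and since $Q_{S,a}(r)\simeq r^{D_{S,a}}$ gives $V_{S,a}(r(n))\simeq n^{D_{S,a}}$ (resp.\ $n^{\alpha_* D_{S,a}/\beta}$), they say exactly that $\mu^{(2n)}(e)\le C V(r(n))^{-1}$; moreover, by symmetry and Cauchy--Schwarz, $\mu^{(2n)}(g)=\sum_y\mu^{(n)}(y)\mu^{(n)}(y^{-1}g)\le\mu^{(2n)}(e)$ for every $g$, so the pointwise upper bound in the first property of Definition \ref{controlled*} comes for free. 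Second, the confinement estimate one must prove is only the window version $\frac12\rho(\tau/\gamma_1)\le n\le\rho(\tau/\gamma_1)$ of (\ref{strong}); Proposition \ref{pro-StC} then propagates it to all scales. Thus the genuinely new content is (I) a matching on-diagonal and near-diagonal lower bound, and (II) the window confinement estimate (\ref{strong}).

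For (I), I would first prove a tightness estimate $\mathbf P_e(\sup_{k\le n}\|X_k\|_{S,a}\ge\gamma r(n))\le\epsilon$. The key point is the control of large jumps. Writing each step as $s_i^m$, the probability that a single step moves the walk a norm-distance $\ge\delta\tau$ is, for $\mu_{S,a}$, governed generator by generator by the $\alpha_i$-stable tail $\mathbf P(|m|\ge M)\simeq M^{-\alpha_i}$ at $M\simeq(\delta\tau)^{\alpha_*/\widetilde\alpha_i}$, hence of order $(\delta\tau)^{-\alpha_*}$; for $\nu_{S,a,\beta}$ one checks directly from (\ref{nuSa}) that $\nu_{S,a,\beta}(g)\simeq\|g\|_{S,a}^{-\alpha_* D_{S,a}-\beta}$, so the tail is $\simeq(\delta\tau)^{-\beta}$. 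In both cases the window $n\asymp\rho(\tau/\gamma_1)$ is chosen precisely so that the expected number of jumps of norm-size $\gtrsim\tau$ is $O(\gamma_1^{-\alpha_*})$ (resp.\ $O(\gamma_1^{-\beta})$), made small by taking $\gamma_1$ large, while the truncated part is handled by a maximal inequality. Given tightness, the on-diagonal lower bound follows from the usual Cauchy--Schwarz argument: $\mu^{(n)}$ carries mass $\ge\tfrac12$ on a ball of volume $\simeq V(r(n))$, whence $\mu^{(2n)}(e)=\sum_y\mu^{(n)}(y)^2\ge\tfrac14\,V(r(n))^{-1}$. The near-diagonal lower bound $\mu^{(2n)}(g)\ge c(\kappa)V(r(n))^{-1}$ for $\|g\|\le\kappa r(n)$ is then extracted from the on-diagonal lower bound together with the confinement estimate (II) by the now-standard near-diagonal lower bound argument (heat-kernel continuity/iteration for the walk confined to a ball).

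For (II), my plan splits according to the measure. For $\nu_{S,a,\beta}$ with $\beta\in(0,2)$, the identification $\nu_{S,a,\beta}(g)\simeq\|g\|_{S,a}^{-\alpha_* D_{S,a}-\beta}$ exhibits the walk as a stable-like jump process of index $\beta$ on the metric measure space $(G,\|\cdot\|_{S,a})$, which is volume doubling since $V_{S,a}(R)\simeq R^{\alpha_* D_{S,a}}$ by \cite{SCZ-nil}. I would therefore adapt the stable-like machinery of \cite{BBK,BGK} --- already invoked in the Example for the word-length norm --- to this doubling space; the confinement and two-sided near-diagonal estimates it yields are exactly the two properties required. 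The only points to verify are the geometric hypotheses of that machinery for $(G,\|\cdot\|_{S,a})$: doubling is in hand, and a Poincaré/connectivity-type input is supplied by the group structure. For $\mu_{S,a}$, by contrast, the jump measure is supported on the singular set of powers of the generators and may have infinite variance, so it is not stable-like in the sense of \cite{BBK,BGK}; the confinement must be proved by hand using the anisotropic structure. I would decompose the walk over the generators, control each one-dimensional $\alpha_i$-stable-like component so that its signed displacement stays $\lesssim\tau^{\alpha_*/\widetilde\alpha_i}$ with probability bounded below, and then convert this coordinate-wise control into the norm bound $\sup_{k\le n}\|X_k\|_{S,a}\le\gamma_2\tau$ through the efficient-representation and scaling properties of $\|\cdot\|_{S,a}$ established in \cite{SCZ-nil}.

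I expect step (II) for $\mu_{S,a}$ to be the main obstacle, and in particular the passage from coordinate-wise displacement control to a bound on the norm $\|X_k\|_{S,a}$. Because $\|\cdot\|_{S,a}$ is defined through minimal representations (\ref{normSa}), keeping it bounded does not merely require the abelianized (signed) displacement along each generator to be small: one must also control the higher, commutator coordinates in the nilpotent group, since cancellations there are exactly what make an efficient representation --- and hence a small norm --- possible. Carrying this out uniformly over the intermediate times $k\le n$, and including the critical case $\alpha_*=2$ (where the relevant one-dimensional components sit at the boundary of finite variance and carry logarithmic corrections), is where the scaling results of \cite{SCZ-nil} are indispensable and where the technical heart of the proof lies.
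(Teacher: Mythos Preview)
Your high-level structure is sound, but the paper takes a different and more unified route that bypasses the obstacle you flag at the end. Rather than treating $\mu_{S,a}$ and $\nu_{S,a,\beta}$ by separate mechanisms (coordinate-wise control for the former, BBK/BGK machinery for the latter), the paper proves both via the same two-step scheme of Sections~\ref{sec-Dav}--\ref{sec-psc}. Step one: tightness (hence $(\|\cdot\|_{S,a},r)$-control) comes from Davies' method applied to the truncated kernel (Proposition~\ref{Truncated}) plus Meyer's construction (Proposition~\ref{tightness}); the only case-specific input is the computation of $\delta_R$ and $\mathcal G(R)$ in the norm $\|\cdot\|_{S,a}$, carried out in Lemma~\ref{lem-stab} (for $\mu_{S,a}$) and Lemma~\ref{DGrad} (for the radial measure). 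Step two: the upgrade from control to \emph{strong} control --- both the near-diagonal lower bound and the window confinement~(\ref{strong}) --- is obtained from a \emph{pointwise pseudo-Poincar\'e inequality} via Theorem~\ref{theo-PP} and Proposition~\ref{pro-SC}. That inequality is imported from \cite{SCZ-nil} (Theorem~\ref{th-stab}) and is the decisive ingredient you do not mention.

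The gap in your plan shows up in two places. First, for the near-diagonal lower bound you invoke ``heat-kernel continuity/iteration for the walk confined to a ball'', but confinement alone only says the mass stays in a ball, not that it is spread over it; without a gradient estimate such as the pseudo-Poincar\'e inequality there is no mechanism to transfer $\mu^{(2n)}(e)\gtrsim V(r(n))^{-1}$ to nearby points, and the chaining of Lemma~\ref{lem-cov}/Proposition~\ref{pro-SC} needs that seed. Second, the coordinate-wise confinement program you propose for $\mu_{S,a}$ --- bounding each one-dimensional component and then lifting to a bound on $\|X_k\|_{S,a}$ --- runs headlong into the commutator issue you yourself identify, and the paper never attempts it: Davies' method is run directly in the norm geometry using only $\delta_R$, $\mathcal G(R)$, and the on-diagonal bound $m(t)$, so no coordinate decomposition is needed. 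Your phrase ``the truncated part is handled by a maximal inequality'' is pointing at the right object, but the precise form of that inequality is Proposition~\ref{Truncated}, and the subsequent lift to strong control requires the pseudo-Poincar\'e input rather than anything coming from BBK/BGK or from coordinate-wise analysis.
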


\begin{rem}
In \cite{SCZ-nil}, a detailed analysis of the sub-additive function $%
\|\cdot\|_{S,a}$ and the associated geometry is given. This analysis is key
to the above result and to its proper understanding. For instance, it is
important to understand that the parameter $\alpha_*$ is not necessarily a
significant
 parameter. It is the quantity $\|\cdot\|_{S,a}^{\alpha_*}$ that is the
important expression. Indeed, for any given nilpotent group $G$, \cite{SCZ-nil}
describes conditions on two pairs of tuples $(S,a)$, $(S^{\prime },a^{\prime
})$, 
\begin{equation*}
S=(s_i)_1^k\in G^k,a=(\alpha_i)_1^k\in (0,\infty)^k, S^{\prime }=(s^{\prime
}_i)_1^{k^{\prime }}\in G^{k^{\prime }}, a^{\prime }=(\alpha^{\prime
}_i)_1^{k^{\prime }}\in (0,\infty)^{k^{\prime }},
\end{equation*}
such that $\|\cdot\|_{S,a}^{\alpha_*}\simeq \|\cdot\|_{S^{\prime },a^{\prime
}}^{\alpha^{\prime }_*}.$ Since the geometry $\|g\|_{S,a}$ is studied and
described rather explicitly in \cite{SCZ-nil}, the above results give
rather concrete controls of the random walks driven with $\mu_{S,a}$ or $%
\nu_{S,a,\beta}$.
\end{rem}

On the one hand, in the case of the measures $\nu_{S,a,\beta}$
and with much more work, 
it is possible to improve upon the statement of Theorem \ref{th-muSa}
and obtain two side  pointiwse bound on $\nu_{S,a,\beta}$. Indeed,
based on the results of \cite{SCZ-nil}, it is proved in \cite{MSC}
that, for all $g\in G$ and $n\ge 1$, 
\begin{equation*}
\nu^{(n)}_{S,a,\beta}(g)\simeq \frac{ n}{(n^{1/\beta}+\|g\|_{S,a})^{%
\alpha_*D_{S,a}+\beta}} \simeq \min\left\{\frac{1}{n^{\alpha_* D_{S,a}/\beta}%
},\frac{n}{\|g\|_{S,a}^{\alpha_*D_{S,a}+\beta}}\right\}.
\end{equation*}

On the other hand, in the case of the measures $\mu_{S,a}$, 
Theorem \ref{th-muSa} provides the most detailed result available at this time. 
Indeed, available techniques do not seem to be adequate to provide a sharp 
two-sided bound for $\mu_{S,a}^{(n)}(g)$.

\subsection{A short guide}
Section \ref{sec-Dav} is based on well-known variations of the 
celebrated Davies off-diagonal upper bound technique. Our key observation is 
that, even in cases where we do not expect to obtain full off-diagonal 
upper bounds, Davies technique provides enough information to prove 
{\em control} in the sense of Definition \ref{controlled}. 

Section \ref{sec-psc} describes the notion of pointwise pseudo-Poincar\'e 
inequality (a variation on the idea introduced in \cite{CSCiso}) and shows 
how, with the help of the underlying group structure, 
a pseudo-Poincar\'e inequality
allow us to upgrade {\em control} to {\em strong control}.

Section \ref{sec-pow} applies the earlier results to a family 
of probability measures and random walks introduced in \cite{SCZ-nil}. These 
measures are supported on the powers of the given generators. They provide 
examples for which no good off-diagonal upper bounds are known at this time. 
Nevertheless, the results developed here apply and capture useful properties 
of the associated random walks.

Section \ref{sec-rad} is concerned with radial type measures where radial 
refers to a given norm on the group $G$. The simplest 
and most interesting case is when this norm is taken to be the usual 
word-length associated with a finite symmetric set of generators and, in 
this case,  we  prove Theorem \ref{th-phi}.

Section \ref{sec-wp} describes the applications to a class of random walks on 
wreath products. The notion of strong control 
(on the base group of the wreath product) leads to lower bounds 
on the probability of return on the wreath product.   

\section{Davies method, tightness and control}  \label{sec-Dav}

\setcounter{equation}{0}

\subsection{Davies method for the truncated process}

In this section, we review how Davies' method applies to the continuous time
process associated with truncated jumping kernels. We follow \cite[Section 5]%
{Mim} rather closely even so our setup is somewhat different. The first paper 
treating jump kernels by Davies method is \cite{CKS}.

Throughout this section $G$ is a discrete group equipped with its counting
measure. Fix a norm $g\mapsto \|g\|$ with volume function $V$ and set $%
d(x,y)=\|x^{-1}y\|$. Note that $d$ is a distance function on $G$. Consider
the left-invariant symmetric jumping kernel 
\begin{equation*}
J(x,y)= \nu(x^{-1}y)
\end{equation*}
associated to a given symmetric probability measure $\nu$ on $G$. For $R>0$,
define 
\begin{equation}  \label{def-dG}
\delta _{R}:=\sum_{\left\Vert x\right\Vert >R}\nu (x) \mbox{ and }\;\; 
\mathcal{G}(R)=\sum_{\left\Vert x\right\Vert \leq R}\left\Vert x\right\Vert
^{2}\nu (x),
\end{equation}
and 
\begin{equation*}
J_{R}(x,y):=J(x,y)\mathbf{1}_{\{d(x,y)\leq R\}},\;\; J_{R}^{\prime
}(x,y):=J(x,y)\mathbf{1}_{\{d(x,y)>R\}}.
\end{equation*}

Denote by $p(t,x,y)$ and $p_R(t,x,y)$ the transition densities of the
continuous time processes associated to $J$ and $J_R$, respectively. In
particular, 
\begin{equation*}
p(t,x,y)=p_t(x^{-1}y)=e^{-t}\sum_0^\infty \frac{t^n}{n!} \nu^{(n)}(x^{-1}y).
\end{equation*}
Let 
\begin{equation}\label{def-dir}
\mathcal{E}(f,f)= \mathcal{E}_\nu(f,f)= 
\frac{1}{2}\sum_{x,y} (f(x)-f(y))^2J(x,y)
\end{equation}
be the corresponding Dirichlet form  and set also 
\begin{equation*}
\; \mathcal{E}%
_R(f,f)= \frac{1}{2}\sum_{x,y} (f(x)-f(y))^2J_R(x,y).
\end{equation*}
Note that 
\begin{eqnarray*}
\mathcal{E}(f,f)-\mathcal{E}_{R}(f,f) &=&\frac{1}{2} \sum_{x,y:d(x,y)>R}%
\left\vert f(x)-f(y)\right\vert ^{2}J(x,y) \\
&\leq &\sum_{x,y: d(x,y)>R}(f(x)^{2}+f(y)^{2})J(x,y) \leq 2\left\Vert
f\right\Vert _{2}^{2}\delta_R
\end{eqnarray*}%
Consider the on-diagonal upper bound given by%
\begin{equation}  \label{odub}
\forall\,x\in G,\;t>0,\;\;p_t(e)\leq m(t),
\end{equation}
where $m:[0,\infty)\ra 
[0,\infty)$ is continuous regularly varying function of negative index 
at infinity and $m(0)<\infty$. 
Since the function $t\mapsto m(t)$ may present a slowly
varying factor, we follow \cite{Mim}. The starting point is the log-Sobolev
inequality 
\begin{equation}
\sum f^{2}\log f\leq \epsilon \mathcal{E}_{R}(f,f)+(2\epsilon \delta
_{R}+\log m(\epsilon ))\left\Vert f\right\Vert _{2}^{2}+\left\Vert
f\right\Vert _{2}^{2}\log \left\Vert f\right\Vert _{2}  \label{log-Sobolev}
\end{equation}%
with $\epsilon>0$ which follows from (\ref{odub}) by \cite[Theorem 2.2.3]%
{Dav}. The following technical proposition is the key to most of the results
obtained in later sections.

\begin{pro}
\label{Truncated}Assume that the on-diagonal upper bound \emph{(\ref{odub})}
holds with $m$ regularly varying of negative index. Then there is a constant 
$C$ such that, for all $R,t>0$ and $x\in G$ we have 
\begin{equation*}
p_{R}(t,e,x)\leq Ce^{4\delta _{R}t}m(t)\left(\frac{t}{R^{2}/\mathcal{G}(R)}
\right) ^{\left\Vert x\right\Vert /3R}.
\end{equation*}
\end{pro}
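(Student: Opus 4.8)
The plan is to run the classical Davies perturbation argument on the truncated process $p_R$, which has jumps bounded by $R$, so that the exponential weights used in the Davies method remain controlled. The starting ingredient is already in place: the on-diagonal bound \eqref{odub} feeds, via \cite[Theorem 2.2.3]{Dav}, into the log-Sobolev inequality \eqref{log-Sobolev} for the full Dirichlet form $\mathcal E_R$ (with the error term $2\epsilon\delta_R$ accounting for the truncation). The key point is that a log-Sobolev inequality of this shape self-improves, by Davies' iteration/semigroup perturbation scheme, into an on-diagonal bound for the \emph{weighted} semigroup, from which the off-diagonal factor $(t/(R^2/\mathcal G(R)))^{\|x\|/3R}$ will emerge.

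First I would fix a $1$-Lipschitz function $\psi$ on $(G,d)$ (for the final estimate one takes $\psi(y)=\lambda\, d(e,y)$, i.e.\ $\psi$ proportional to $\|y\|$) and introduce the tilted form $e^{-\psi}\mathcal E_R(e^{\psi}f, e^{\psi}f)$. Because $J_R$ is supported on $\{d(x,y)\le R\}$, the elementary bound $|e^{\psi(x)-\psi(y)}-1|\le e^{\lambda R}-1$ holds on the support, and a standard computation shows that conjugating the generator by $e^{\psi}$ perturbs the Dirichlet form by a term controlled by the ``carré du champ'' quantity
\begin{equation*}
\Gamma_R(\psi) \;=\; \sup_x \sum_y (e^{\psi(x)-\psi(y)}-1)^2 J_R(x,y)\;\le\; C\,(e^{\lambda R}-1)^2 \lambda^{-2}\,\mathcal G(R)/R^2 \cdot \lambda^2,
\end{equation*}
so that the exponential weight contributes an extra factor of order $\exp\!\big(c\,\lambda^2 R^2 e^{2\lambda R}\,\mathcal G(R)/R^2\big)$ to the diagonal of the tilted semigroup. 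Optimizing the free parameter $\lambda$ against the off-diagonal distance $\|x\|$ is what converts this into the stated power $(t/(R^2/\mathcal G(R)))^{\|x\|/3R}$; the choice $\lambda\sim \|x\|/(3Rt)$-type scaling, combined with $e^{\lambda R}\le e$, is what forces the denominator $R^2/\mathcal G(R)$ (the natural time scale of the truncated walk) and the exponent $\|x\|/3R$. The factor $e^{4\delta_R t}$ is the price of replacing $\mathcal E$ by $\mathcal E_R$, inherited directly from the $2\|f\|_2^2\delta_R$ term in \eqref{log-Sobolev} through the perturbed semigroup's $L^2\to L^\infty$ norm.

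Concretely, the steps in order are: (i) derive from \eqref{log-Sobolev} and Davies' method an ultracontractive bound $\|P^R_t\|_{2\to\infty}\le C e^{2\delta_R t} m(t/2)^{1/2}$ for the truncated semigroup; (ii) repeat the argument for the conjugated semigroup $P^{R,\psi}_t = e^{-\psi}P^R_t e^{\psi}$, picking up the weight $\Gamma_R(\psi)$ in the constant; (iii) combine the two weighted $2\to\infty$ bounds by duality/semigroup composition at time $t/2$ to get $e^{\psi(e)-\psi(x)}p_R(t,e,x)\le C e^{4\delta_R t} m(t)\,e^{c\,t\,\Gamma_R(\psi)}$; and (iv) optimize over the Lipschitz constant $\lambda$ in $\psi=\lambda\|\cdot\|$. \textbf{The main obstacle} I anticipate is step (iv): carrying out the optimization so that it yields precisely the clean power-law form $\big(t/(R^2/\mathcal G(R))\big)^{\|x\|/3R}$ rather than a crude Gaussian-type $\exp(-c\|x\|^2/(\cdots))$ bound. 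This requires handling the regime $t \gtrsim R^2/\mathcal G(R)$ (where the bound is genuinely sub-Gaussian and the power form is the right answer) by keeping $\lambda R$ bounded — hence using the inequality $e^{\lambda R}-1\le C\lambda R$ only for $\lambda R\lesssim 1$ and optimizing $\lambda\approx \frac{\|x\|}{3Rt}\,\log(\cdots)$ so that the logarithm of the diagonal-times-weight expression becomes $\frac{\|x\|}{3R}\log\!\big(t\,\mathcal G(R)/R^2\big)$. The regularly-varying hypothesis on $m$ ensures the slowly varying factors do not spoil this optimization, which is exactly why the statement is phrased with $m$ regularly varying of negative index.
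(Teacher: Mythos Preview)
Your plan is the paper's: Davies' method via the log-Sobolev inequality (\ref{log-Sobolev}), invoking \cite[Corollary 5.3]{Mim} for the weighted bound
\[
p_R(t,x,y)\le Cm(t)\exp\!\big(4\delta_R t + 72\,\Lambda_R(\psi)^2\, t - \psi(y)+\psi(x)\big),
\]
then bounding the carr\'e du champ and choosing $\lambda$ in $\psi(z)=\lambda(\|x_0\|-\|z\|)^+$.

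Your anticipated obstacle in step (iv), however, is misdiagnosed in a way worth correcting. The regime $t\ge R^2/\mathcal G(R)$ is \emph{trivial}: the right-hand side of the claimed inequality then already dominates $Ce^{4\delta_R t}m(t)$, so one need only treat $t<R^2/\mathcal G(R)$. In that regime one does \emph{not} keep $\lambda R$ bounded, and $\lambda$ does \emph{not} depend on $\|x\|$. After using the crude inequality $\lambda^2 e^{2\lambda R}\le R^{-2}e^{3\lambda R}$ (valid for all $\lambda R\ge 0$, since $u^2\le e^u$) to bound the carr\'e du champ term by $R^{-2}e^{3\lambda R}\mathcal G(R)$, the exponent to be controlled is
\[
72\,t\,R^{-2}e^{3\lambda R}\mathcal G(R)\;-\;\lambda\|x\|.
\]
The paper simply sets $\lambda=\frac{1}{3R}\log\frac{R^2}{t\,\mathcal G(R)}>0$, which makes the first term equal to the constant $72$ and turns the second into $\frac{\|x\|}{3R}\log\frac{t\,\mathcal G(R)}{R^2}$, i.e.\ exactly the stated power. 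No delicate optimization is required, and the regularly-varying hypothesis on $m$ is used inside the Davies machinery of \cite{Mim}, not in this final step.
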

\begin{rem} The bound in this proposition is better than the 
uniform bound $p_R(t,e,x)\le Ce^{4\delta_Rt} m(t)$ 
only when $t<R^2/\mathcal G(R)$.
\end{rem}
\begin{proof} It suffices to consider the case $t<R^2/\mathcal G(R)$.
Starting with (\ref{log-Sobolev}), we apply Davies method, as described in
\cite[Section 5.1]{Mim} to estimate $p_{R}(t,e,x)$. Let%
$$\Lambda_R(\psi)= \max\left\{\|e^{-2\psi}\Gamma_R( e^{\psi},e^{\psi})\|_\infty,\|e^{\psi}\Gamma_R(e^{-\psi},e^{-\psi})\|_\infty\right\}$$
with 
$$\Gamma_R(\psi)(x)=\sum_y |\psi(x)-\psi_y)|^2J_R(x,y).$$
then by \cite[Corollary 5.3]{Mim}, 
\[
p_{R}(t,x,y)\leq Cm(t)\exp \left(
4\delta _{R}t +72 \Lambda_{R}(\psi)^2 t- \psi(y)+\psi(x)\right). 
\]%
Consider the case $x=x_0$ and $y=e$.
For $\lambda>0$, set $
\psi (z)=\lambda (\left\Vert x_{0}\right\Vert -\left\Vert z\right\Vert
)^{+} $ and write%
\begin{eqnarray*}
e^{-2\psi (z)}\Gamma _{R}(e^{\psi },e^{\psi })(z)
&=&\sum_y (e^{\psi (z)-\psi (y)}-1)^{2}J _{R}(z,y) \\
&\leq &e^{2\lambda R}\sum_y (\psi (z)-\psi
(y))^{2}J _{R}(z,y) \\
&\leq &\lambda ^{2}e^{2\lambda R}\sum_{\left\Vert y\right\Vert \leq
R}\left\Vert y\right\Vert ^{2}d\nu \leq R^{-2}e^{3\lambda R}\mathcal{G}(R).
\end{eqnarray*}%
Since $\psi (e)=\lambda \left\Vert x_{0}\right\Vert 
$, we obtain%
\[
p_{R}(t,e,x_{0})\leq Cm(t)\exp \left( 4\delta _{R}t+72tR^{-2}e^{3\lambda R}%
\mathcal{G}(R)-\lambda \left\Vert x_{0}\right\Vert \right) . 
\]%
Since  $t<R^2/\mathcal G(R)$, we can set
\[
\lambda =\frac{1}{3R}\log \frac{R^{2}}{t\mathcal{G}(R)} 
\]%
so that the second term $72tR^{-2}e^{3\lambda R}\mathcal{G}(R)$ is a
constant. This yields the stated upper bound.
\end{proof}

\subsection{Control}

Meyer's
construction is a useful technique to construct the process $X_{s}$ by adding
big jumps to $X_{s}^{R}$. See, e.g., \cite{Meyer} and \cite[Lemma 3.1]{BGK}.
In this section, we combine the off-diagonal upper bound in Proposition \ref%
{Truncated} with Meyer's construction to derive control type results for the
process with jumping kernel $J$. Our goal is to show that there is a certain
choice of continuous increasing function $r(t)$, for any $\varepsilon >0$,
there exists constant $\gamma >1$ such that 
\begin{equation*}
\mathbf{P}_e\left( \sup_{s\leq t}\left\Vert X_{s}\right\Vert \ge \gamma
r(t)\right)\le \varepsilon.
\end{equation*}

Let $X_{s}^{R}$ denote the process with truncated kernel $J_{R}.$ 
It follows from Meyer's construction that (see \cite{Meyer} and \cite[Lemma 3.1]{BGK}) 
\begin{equation*}
\mathbf{P}_e(X_{s}\neq X_{s}^{R}\text{ for some }s\leq t)\leq t\delta _{R}.
\end{equation*}%
For any $r>0$, $\gamma >1$, both to be specified later, we have%
\begin{eqnarray}
\lefteqn{ \mathbf P_{e}\left( \sup_{s\leq t}\left\Vert X_{s}\right\Vert \geq
\gamma r\right)}\hspace{.5in}&&  \notag \\
&\leq &\mathbf{P}_e\left( \sup_{s\leq t}\left\Vert X_{s}^{R}\right\Vert \geq
\gamma r\right) + \mathbf{P}_{e}\left( X_{s}\neq X_{s}^{R}\text{ for some }%
s\leq t\right)  \notag \\
&\leq & \mathbf{P}_{e}\left( \sup_{s\leq t}\left\Vert X_{s}^{R}\right\Vert
\geq \gamma r\right) +t\delta _{R}  \notag \\
&\leq &2\sup_{s\leq t}\left\{\mathbf{P}_e\left( \left\Vert
X_{s}^{R}\right\Vert \geq \frac{\gamma }{2}r\right)\right\} +t\delta _{R}
\label{Meyer}
\end{eqnarray}

This will be helpful in deriving the following result.

\begin{pro}
\label{tightness} Assume that for all $\rho>0$, $V(2\rho)\le C_{VD}V(\rho).$
Assume also that $\nu$ is such that \emph{(\ref{odub})} holds where $m$ is
regularly varying of negative index. For $\varepsilon >0$, fix a function $%
R(t)$ such that 
\begin{equation*}
2 t\delta _{R(t)}<\varepsilon \mbox{ and }\;\; \frac{t}{R(t)^{2}/\mathcal{G}%
(R(t))}<e^{-1}.
\end{equation*}
Let $r(t)\ge R(t)$ be a positive continuous increasing function such that 
\begin{equation*}
\sup_{t>0}\left\{m(t)V(r(t))e^{-r(t)/6R(t)}\right\}<\infty .
\end{equation*}
Then, for any $\epsilon > 0$ there exists a constant $\gamma \ge 1$ such
that 
\begin{equation*}
\mathbf{P}_e\left( \sup_{s\leq t}\left\Vert X_{s}\right\Vert \geq \gamma
r(t)\right) <\varepsilon
\end{equation*}
In particular,  we have 
\begin{equation*}
p(t,e,e)\geq \frac{1-\varepsilon }{V(\gamma r(t))}.
\end{equation*}
If, in addition $V(r(t))\simeq m(t)$, then the measure $\nu$ is $(\|\cdot\|, r)$-controlled in continuous time.
\end{pro}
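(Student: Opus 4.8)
The plan is to substitute the off-diagonal bound of Proposition~\ref{Truncated} into the Meyer decomposition \eqref{Meyer} and to arrange the parameters so that both resulting terms are small. Writing $R=R(t)$, $T=R^2/\mathcal{G}(R)$ and $\rho=\tfrac{\gamma}{2}r(t)$, inequality \eqref{Meyer} gives
\[
\mathbf{P}_e\Big(\sup_{s\le t}\|X_s\|\ge \gamma r(t)\Big)\le 2\sup_{s\le t}\Big\{\mathbf{P}_e\big(\|X_s^{R}\|\ge \rho\big)\Big\}+t\delta_R .
\]
The hypothesis $2t\delta_R<\varepsilon$ disposes of the last term, so everything reduces to a bound on $\mathbf{P}_e(\|X_s^R\|\ge\rho)=\sum_{\|x\|\ge\rho}p_R(s,e,x)$ that is uniform in $s\le t$ and small once $\gamma$ is large. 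The conditions imposed on $R(t)$ and $r(t)$ are exactly what makes this possible.

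Next I would insert Proposition~\ref{Truncated}. For $s\le t$ the base obeys $s/T\le t/T<e^{-1}$, so with $\theta=s/T$,
\[
\mathbf{P}_e(\|X_s^R\|\ge\rho)\le Ce^{4\delta_R s}\,m(s)\sum_{\|x\|\ge\rho}\theta^{\|x\|/3R}.
\]
The geometric sum is controlled by volume doubling: decompose $\{\|x\|\ge\rho\}$ into shells $\rho+kR\le\|x\|<\rho+(k+1)R$, each of cardinality at most $V(\rho+(k+1)R)\le C(k+2)^{D}V(\rho)$ (with $D=\log_2 C_{VD}$ and using $R\le\rho$, valid for $\gamma\ge2$), and bound $\theta^{\|x\|/3R}\le\theta^{\rho/3R}\theta^{k/3}$ on the $k$-th shell. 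Since $\sum_k(k+2)^D\theta^{k/3}$ is uniformly summable for $\theta\le e^{-1}$, this yields $\sum_{\|x\|\ge\rho}\theta^{\|x\|/3R}\le C'V(\rho)\theta^{\rho/3R}$, hence
\[
\mathbf{P}_e(\|X_s^R\|\ge\rho)\le C''e^{4\delta_R s}\,m(s)\,V(\tfrac{\gamma}{2}r(t))\,(s/T)^{\gamma r(t)/6R(t)} .
\]

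The main obstacle is the supremum over $s\le t$, because $m(s)$ blows up as $s\to0$ (as $m$ has negative index). The resolution is that the exponent $\rho/3R=\gamma r(t)/6R(t)\ge\gamma/6$ (since $r(t)\ge R(t)$), so for $\gamma$ large enough in terms of the index of $m$ the function $s\mapsto m(s)(s/T)^{\rho/3R}$ is regularly varying of positive index; by the standard monotonicity properties of such functions (Potter's bounds) together with $m(0)<\infty$, its supremum over $(0,t]$ is comparable, uniformly in $t$, to its value at $s=t$. Using $t/T<e^{-1}$, this gives $\sup_{s\le t}m(s)(s/T)^{\rho/3R}\le C\,m(t)e^{-\gamma r(t)/6R(t)}$. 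Feeding in $V(\tfrac{\gamma}{2}r(t))\le C\gamma^{D}V(r(t))$, factoring $e^{-\gamma r(t)/6R(t)}=e^{-r(t)/6R(t)}e^{-(\gamma-1)r(t)/6R(t)}$, and invoking both the standing hypothesis $\sup_t m(t)V(r(t))e^{-r(t)/6R(t)}<\infty$ and $r(t)\ge R(t)$, the bounded factor $e^{4\delta_R s}\le e^{2\varepsilon}$ being absorbed, I obtain
\[
\sup_{s\le t}\mathbf{P}_e(\|X_s^R\|\ge\rho)\le C\gamma^{D}e^{-(\gamma-1)/6}.
\]
Since $\gamma^{D}e^{-(\gamma-1)/6}\to0$, choosing $\gamma$ large makes this $<\varepsilon/4$, and with $t\delta_R<\varepsilon/2$ the tail estimate follows.

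For the on-diagonal lower bound I would argue as usual. Applied at time $t/2$, the tail estimate gives $\mathbf{P}_e(\|X_{t/2}\|\le\gamma r(t/2))\ge1-\varepsilon$, and the semigroup identity $p_t(e)=\sum_x p_{t/2}(x)^2$ (using symmetry of $\nu$) combined with Cauchy--Schwarz yields
\[
p_t(e)\ge\frac{\big(\sum_{\|x\|\le\gamma r(t/2)}p_{t/2}(x)\big)^2}{V(\gamma r(t/2))}\ge\frac{(1-\varepsilon)^2}{V(\gamma r(t))},
\]
where $r(t/2)\le r(t)$ is used; relabelling $\varepsilon$ gives the stated bound. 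Finally, the additional assumption makes the a priori upper bound $p_t(e)\le m(t)$ of \eqref{odub} comparable (via doubling) to the lower bound just proved, so $p_t(e)\simeq V(r(t))^{-1}$; together with the tail estimate this is precisely the continuous-time form of $(\|\cdot\|,r)$-control in the sense of Definition~\ref{controlled}.
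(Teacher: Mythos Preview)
Your proof is correct and follows essentially the same approach as the paper: substitute the truncated-kernel bound of Proposition~\ref{Truncated} into the Meyer decomposition \eqref{Meyer}, sum over the region $\{\|x\|\ge\rho\}$ using volume doubling, and handle the supremum in $s$ via the regular variation of $m$ together with the fact that the exponent $\rho/3R\ge\gamma/6$ can be made to dominate the negative index of $m$. The only notable differences are cosmetic: the paper decomposes $\{\|x\|\ge\rho\}$ into dyadic annuli $\{\|x\|\simeq 2^i\gamma r\}$ rather than your linear shells of width $R$, and it leaves both the $e^{4\delta_R s}$ factor and the Cauchy--Schwarz argument for the on-diagonal lower bound implicit, whereas you spell them out.
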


\begin{proof}
Proposition \ref{Truncated} implies that for $s\leq t,$ 
\begin{eqnarray*}
p_{R}(s,e,x) &\leq &Cm(s)\left( \frac{s}{R^{2}/\mathcal{G}(R)}\right)
^{\left\Vert x\right\Vert /3R} \\
&=&Cm(s)\left( \frac{s}{t}\right) ^{\left\Vert x\right\Vert /3R}\left( \frac{%
t}{R^{2}/\mathcal{G}(R)}\right) ^{\left\Vert x\right\Vert /3R}.
\end{eqnarray*}%
Fix $R=R(t)$, $r=r(t)\ge R$,  
decompose $\{x: \|x\|\ge  \frac{\gamma }{2}r\}$ into dyadic
annuli  $\{x: \|x\| \simeq 2^{i}\gamma r\}$ and write 
\begin{eqnarray*}
\lefteqn{\mathbf P_e\left( \left\Vert X_{s}^{R}\right\Vert 
\geq \frac{\gamma r}{2}%
\right)}\hspace{.5in} &&\\
&\leq& C\sum_{i=0}^{\infty }m(s)\left( \frac{s}{t}\right) ^{2^{i-1}\gamma
/3}e^{-2^{i-1}\gamma r/3R}V(2^{i}\gamma r(t)) \\
&=&Cm(t)V(\gamma r)\sum_{i=0}^{\infty }\frac{m(s)}{m(t)}\left( \frac{s}{t}%
\right) ^{2^{i-1}\gamma /3}e^{-2^{i-1}\gamma r/3R}\left( \frac{%
V(2^{i}\gamma r)}{V(\gamma r)}\right) .
\end{eqnarray*}%
Let $C_{VD}$ denotes the volume doubling constant of $(G,d)$, then%
\[
V(\gamma r)\leq C_{VD}^{1+\log \gamma }V(r),\;\;\frac{V(2^{i}\gamma r)}{%
V(\gamma r)}\leq C_{VD}^{i}. 
\]%
Recall that $m(t)$ is a regularly varying function with 
negative index. Hence, for $\gamma $ large enough, we have
\[
M=\sup_{0<s\leq t,i\in \mathbb N}\left\{\frac{m(s)}{m(t)}\left( \frac{s}{t%
}\right) ^{2^{i-1}\gamma /3}\right\}<\infty 
. 
\]%
Therefore%
$$\mathbf P_e\left( \left\Vert X_{s}^{R}\right\Vert \geq 
\frac{\gamma r}{2}
\right)
\leq C_1 m(t)V(r)e^{-r/6R}\sum_{i=0}^{\infty }e^{-2^{i}\gamma/ 12}C_{VD}^i
$$
By assumption, $r=r(t)$  and $R=R(t)$
satisfy 
$$\sup_{t>0}\left\{m(t)V(r(t))e^{-r(t)/3R(t)}\right\}<\infty 
.$$ 
It follows that for  $\gamma $ sufficiently large, we have 
\[
\mathbf P_e\left( \left\Vert X_{s}^{R}\right\Vert \geq \frac{\gamma }{2}r(t)\right) <%
\frac{\varepsilon }{4}. 
\]%
Plugging this estimate into (\ref{Meyer}), we obtain 
$\mathbf P_e\left( \sup_{s\le t}\{\left\Vert X_{s}\right\Vert\} \geq \gamma r(t)\right) <\varepsilon$. 
\end{proof}

\begin{cor}
\label{cor-tight} Under the hypotheses of \emph{Proposition \ref{tightness}}%
, for any $\epsilon>0$ there exists $\gamma>0$ such that 
\begin{equation*}
\mathbf{P}_e\left( \sup_{s\leq t}\left\Vert X_{s}\right\Vert \leq 2\gamma
r(t),\left\Vert X_{t}\right\Vert \leq \gamma r(t)\right) \geq 1-\varepsilon .
\end{equation*}
\end{cor}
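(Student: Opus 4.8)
The plan is to deduce the corollary directly from Proposition \ref{tightness}, whose conclusion already controls the running maximum $\sup_{s\le t}\|X_s\|$. The only new feature here is the additional terminal constraint $\|X_t\|\le \gamma r(t)$, and the key observation is that this constraint is automatically \emph{weaker} than a bound on the running maximum: since $X_t$ is one of the positions visited up to time $t$, we always have $\|X_t\|\le \sup_{s\le t}\|X_s\|$. Thus the seemingly two-sided event in the statement is in fact dominated by a single bound on the running maximum, and the factor $2\gamma$ in front of the maximum is deliberately larger than the factor $\gamma$ controlling $\|X_t\|$ precisely so that one bound implies both conditions.

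Concretely, I would first record the elementary inclusion of events
$$\left\{\sup_{s\le t}\|X_s\|\le \gamma r(t)\right\}\subseteq \left\{\sup_{s\le t}\|X_s\|\le 2\gamma r(t);\ \|X_t\|\le \gamma r(t)\right\},$$
which holds because, on the left-hand event, the running maximum is at most $\gamma r(t)\le 2\gamma r(t)$, forcing the first condition on the right, while $\|X_t\|\le \sup_{s\le t}\|X_s\|\le\gamma r(t)$ forces the second. Next I would invoke Proposition \ref{tightness}: under its hypotheses, for the prescribed $\varepsilon>0$ there is a constant $\gamma\ge 1$ with $\mathbf{P}_e(\sup_{s\le t}\|X_s\|\ge \gamma r(t))<\varepsilon$. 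Passing to complements, and using $\{\sup_{s\le t}\|X_s\|>\gamma r(t)\}\subseteq\{\sup_{s\le t}\|X_s\|\ge\gamma r(t)\}$, gives $\mathbf{P}_e(\sup_{s\le t}\|X_s\|\le \gamma r(t))\ge 1-\varepsilon$. Combining this with the inclusion above yields the claimed lower bound for the same $\gamma$.

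I do not expect any genuine obstacle here: the statement is essentially a reformulation of Proposition \ref{tightness} in the two-constraint form needed later to match the second clause of Definition \ref{controlled*}. The only point requiring care is the bookkeeping of constants, namely keeping the threshold $2\gamma$ for the running maximum strictly weaker than the threshold $\gamma$ for $\|X_t\|$, so that the single tightness estimate supplied by Proposition \ref{tightness} simultaneously controls both events.
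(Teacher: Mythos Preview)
Your argument is correct and in fact more direct than the paper's. The paper proves the corollary by writing the joint event as a difference,
\[
\mathbf{P}_e\!\left(\sup_{s\le t}\|X_s\|\le 2\gamma r(t),\ \|X_t\|\le\gamma r(t)\right)
=\mathbf{P}_e(\|X_t\|\le\gamma r(t))-\mathbf{P}_e\!\left(\sup_{s\le t}\|X_s\|\ge 2\gamma r(t),\ \|X_t\|\le\gamma r(t)\right),
\]
and then bounding each piece separately via the tightness of the running maximum, ending with the lower bound $1-2\,\mathbf{P}_e(\sup_{s\le t}\|X_s\|\ge\gamma r(t))$; one then applies Proposition~\ref{tightness} with $\varepsilon/2$. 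You instead exploit the elementary inclusion $\{\sup_{s\le t}\|X_s\|\le\gamma r(t)\}\subseteq\{\sup_{s\le t}\|X_s\|\le 2\gamma r(t),\ \|X_t\|\le\gamma r(t)\}$, which immediately gives $\ge 1-\mathbf{P}_e(\sup_{s\le t}\|X_s\|\ge\gamma r(t))$ with no loss of a factor of~$2$. Your route is shorter and, as you note in passing, shows that the threshold $2\gamma$ for the running maximum could be replaced by $\gamma$; the paper's decomposition is the one that actually uses the extra room between $\gamma$ and $2\gamma$. One small point the paper makes explicit and you leave implicit: passing to complements in the form $\mathbf{P}_e(\sup_{s\le t}\|X_s\|\le\gamma r(t))=1-\mathbf{P}_e(\sup_{s\le t}\|X_s\|>\gamma r(t))$ presupposes that the process does not escape to infinity in finite time, which is automatic here since the jump rate is bounded.
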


\begin{proof}
Write
\begin{eqnarray*}
\lefteqn{\mathbf P_e\left( \sup_{s\leq t}\left\Vert X_{s}\right\Vert \leq 2\gamma
r(t),\left\Vert X_{t}\right\Vert \leq \gamma r(t)\right)} \hspace{.5in} && \\
&=&\mathbf P_e\left( \left\Vert X_{t}\right\Vert \leq \gamma r(t)\right)
-\mathbf P_e\left( \sup_{s\leq t}\left\Vert X_{s}\right\Vert \geq 2\gamma
r(t),\left\Vert X_{t}\right\Vert \leq \gamma r(t)\right) \\
&\geq &1-2\mathbf P_e\left(\sup_{s\leq t}\left\Vert X_{s}\right\Vert \geq 
\gamma
r(t)\right).
\end{eqnarray*}%
Note that we have used the fact that, 
because of space homogeneity (i.e., group invariance), $X_t$ cannot 
escape to infinity in finite time.
\end{proof}

\begin{rem}
\label{rem-disc} The conclusions of Proposition \ref{tightness} and
Corollary \ref{cor-tight} apply to the associated discrete time random walk.
To see this, fix a regularly varying function $m$ and note that (up to
changing $m$ to $cm$ for some constant $c$), (\ref{odub}) is equivalent to $%
\nu^{(2n)}(e)\le m(n)$. Further, it is easy to control the difference
between $\mathbf{P}_e(\|X_t\|\ge r)$ and $\mathbf{P}_e(\|X_n\|\ge r)$ with $%
n=\lfloor t\rfloor$ as long as $n$ is large enough. It follows that the
proof above applies the discrete random walk result as well.
\end{rem}

\section{Pseudo-Poincar\'e inequality and strong control} \label{sec-psc}
\setcounter{equation}{0}
\subsection{Pseudo-Poincar\'e inequality}

With some work, the results 
of the previous section can be extended to the more general
context of graphs and discrete spaces. The results presented below make a
more significant use of the underlying group structure.

\begin{defin}
Let $G$ be discrete group equipped with a symmetric probability measure $\nu$, 
a sub-additive function $\|\cdot\|$ and a positive continuous increasing
function $r$ with inverse $\rho$. We say that $\nu$ satisfies a pointwise $%
(\|\cdot\|,r)$-pseudo-Poincar\'e inequality if, for any $f$ with finite
support on $G$, 
\begin{equation}  \label{pseudo}
\forall\, g\in G,\;\;\sum_{x\in G}|f(xg)-f(x)|^2\le 
C \rho(\|g\|) \mathcal{E}_\nu%
(f,f)
\end{equation}
Here $\mathcal E_\nu$ is the Dirichlet form of $\nu$ defined at 
(\ref{def-dir}).
\end{defin}

\begin{theo}
\label{theo-PP} Assume that $(G,\|\cdot\|)$ is such that $V$ is doubling.
Let $\nu$ be a symmetric probability measure such that $\nu(e)>0$. Assume
that $r$ is a positive doubling continuous increasing function such that 
\begin{equation*}
\nu^{(2n)}(e)\simeq V(r(n))^{-1}.
\end{equation*}
Assume further that $\nu$ satisfies the $(\|\cdot\|,r)$-pseudo-Poincar\'e
inequality. Then there exists $\eta>0$ such that for all $n$ and $g$ with $%
\|g\|\le \eta r(n)$ we have 
\begin{equation*}
\nu^{(n)}(g)\simeq V(r(n))^{-1}.
\end{equation*}
\end{theo}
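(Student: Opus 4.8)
The plan is to dispose of the (easy) upper bound first and then devote the work to the near-diagonal lower bound, which is where the pseudo-Poincar\'e inequality is used. For the upper bound, Cauchy--Schwarz together with the symmetry of $\nu^{(n)}$ gives, at even times, $\nu^{(2m)}(g)=\sum_x\nu^{(m)}(x)\nu^{(m)}(x^{-1}g)\le \nu^{(2m)}(e)$, and a similar estimate at odd times via $\nu^{(2m+1)}(g)\le \sqrt{\nu^{(2m)}(e)\nu^{(2m+2)}(e)}$. Combined with the on-diagonal hypothesis $\nu^{(2n)}(e)\simeq V(r(n))^{-1}$ and the doubling of $r$ and $V$, this yields $\nu^{(n)}(g)\le CV(r(n))^{-1}$ for \emph{every} $g$. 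Everything then reduces to proving the matching lower bound $\nu^{(n)}(g)\ge cV(r(n))^{-1}$ when $\|g\|\le \eta r(n)$ for a suitably small $\eta$.

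The starting point for the lower bound is the elementary identity, valid for any symmetric probability measure,
\[
\nu^{(2n)}(e)-\nu^{(2n)}(g)=\tfrac12\sum_{x\in G}\bigl(\nu^{(n)}(xg)-\nu^{(n)}(x)\bigr)^2,
\]
which follows by expanding the square and using the symmetry of $\nu^{(n)}$ together with the translation invariance of the counting measure. I would then apply the $(\|\cdot\|,r)$-pseudo-Poincar\'e inequality (\ref{pseudo}) with $f=\nu^{(n)}$ to bound the right-hand side by $\tfrac{C}{2}\rho(\|g\|)\,\mathcal E_\nu(\nu^{(n)},\nu^{(n)})$.

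The crux is to show that the Dirichlet energy of $\nu^{(n)}$ decays like $1/n$ times the return probability: $\mathcal E_\nu(\nu^{(n)},\nu^{(n)})=\nu^{(2n)}(e)-\nu^{(2n+1)}(e)\le \tfrac{C}{n}\,V(r(n))^{-1}$. I would obtain this from the spectral representation $\nu^{(k)}(e)=\int\lambda^{k}\,d\mu(\lambda)$, writing the energy as $\int\lambda^{2n}(1-\lambda)\,d\mu$ and using the elementary bound $\lambda^{2n}(1-\lambda)\le \tfrac{C}{n}\lambda^{n}$ on $[0,1]$. Here the hypothesis $\nu(e)>0$ is essential: it forces the spectrum into $[-1+2\nu(e),1]$, so the negative part of the spectrum contributes only exponentially small terms, and it simultaneously guarantees $\nu^{(n)}(e)\simeq \nu^{(2n)}(e)\simeq V(r(n))^{-1}$ with no parity obstruction. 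I expect this energy-decay estimate, and the care needed to absorb the negative spectrum, to be the main obstacle.

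Assembling the pieces gives $\nu^{(2n)}(e)-\nu^{(2n)}(g)\le C\,\tfrac{\rho(\|g\|)}{n}\,V(r(n))^{-1}$. Since $r$, hence $\rho$, is doubling, $\rho$ admits a polynomial lower bound, so that $\rho(\eta r(n))\le C\eta^{\theta}\rho(r(n))=C\eta^{\theta}n$ for some $\theta>0$; thus when $\|g\|\le \eta r(n)$ the difference is at most $C\eta^{\theta}V(r(n))^{-1}$. Choosing $\eta$ small enough that $C\eta^{\theta}$ lies below half the constant in the on-diagonal lower bound $\nu^{(2n)}(e)\ge cV(r(n))^{-1}$ yields $\nu^{(2n)}(g)\ge \tfrac{c}{2}V(r(n))^{-1}$. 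Finally I would pass from even to odd times using the one-step comparison $\nu^{(2n+1)}(g)\ge \nu(e)\,\nu^{(2n)}(g)$ (retaining the $e$-term in the convolution), again invoking the doubling of $r$ and $V$ to match the scales, which completes the proof.
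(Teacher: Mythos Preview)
Your proof is correct and follows essentially the same route as the paper's, which simply invokes the argument of \cite[Theorem 4.2]{HSC} to get $|\nu^{(2n+N)}(x)-\nu^{(2n+N)}(e)|\le C(\rho(\|x\|)/N)^{1/2}\nu^{(2n)}(e)$ and then takes $N=2n$. Your version is in fact slightly more direct: the exact identity $\nu^{(2n)}(e)-\nu^{(2n)}(g)=\tfrac12\|\nu^{(n)}(\cdot\,g)-\nu^{(n)}\|_2^2$, combined with the pseudo-Poincar\'e inequality and the spectral energy-decay $\mathcal E_\nu(\nu^{(n)},\nu^{(n)})\le Cn^{-1}\nu^{(n)}(e)$, yields a linear (rather than square-root) bound in $\rho(\|g\|)/n$, but the underlying mechanism is the same.
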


\begin{proof}
The hypothesis (\ref{pseudo}) and the argument of \cite[Theorem 4.2]{HSC}
gives
$$|\nu^{(2n+N)}(x)-\nu^{(2n+N)}(e)|\le 
C \left(\frac{\rho(\|x\|)}{N}\right)^{1/2}\nu^{(2n)}(e).$$
Fix $x$ and $n$ such that  $\rho(\|x\|)\le \eta n $ and use the above
inequality with  $N=2n$ to obtain
$$\nu^{(4n)}(x)\ge \left(1-  (2C'\eta)^{1/2}\right) \nu^{(4n)}(e).$$
Hence, we can  choose $\eta>0$ such that
$$\nu^{(4n)}(x)\ge c \nu^{(4n)}(e).$$
Since $\nu(e)>0$, this also holds for $4n+i$, $i=1,2,3$, 
at the cost of changing the value of the positive constant $c$.
\end{proof}

\subsection{Strong control}

\begin{defin}
\label{wellc} We say that $\|\cdot\|$ is well-connected if there exists $%
b\in (0,\infty)$ such that, for any $r>0$ and $x\in G$ with $\|x\|\le r$
there exists a finite sequence of points $(x_0)_1^N \in G$
with $\|x_i\|\le 2r$, $\|x_i^{-1}x_{i+1}\|\le b$, $x_0=e$ and $x_N=x$.
\end{defin}
Note that in this definition, the number $N$ of points in the sequence 
is finite but that it depend on $x$ and no upper bound in terms of $r$ 
is required. 

\begin{lem}
\label{lem-cov} Assume that $\|\cdot\|$ is well-connected and $V$ is
doubling. Then for any fixed $\epsilon>0$ there exists $M_\epsilon$ such
that for any $r\ge 8b /\epsilon$ and any $\|x\|\le r$ we can find $(z_i)_0^M$%
, $z_0=e$, $z_M=x$, $M\le M_\epsilon$, such that $\|z_{i}^{-1}z_{i+1}\|\le
\epsilon r$.
\end{lem}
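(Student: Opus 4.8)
The plan is to use well-connectedness to produce a fine path from $e$ to $x$ and then to ``thin it out'' with the help of a net in the ball $B(e,2r)=\{z:\|z\|\le 2r\}$, the whole difficulty being to bound the number of retained points by a quantity $M_\epsilon$ that does not depend on $r$ or $x$. First I would apply Definition \ref{wellc} to $x$ (with $\|x\|\le r$): this yields a sequence $e=x_0,x_1,\dots,x_N=x$ with $\|x_i\|\le 2r$ and $\|x_i^{-1}x_{i+1}\|\le b$ for all $i$. The length $N$ is uncontrolled, so this path cannot be used directly; it will only serve as a ``guide''. Note that the hypothesis $r\ge 8b/\epsilon$ gives $b\le \epsilon r/8$, which is exactly what makes the small steps of the guide negligible at scale $\epsilon r$.

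Next I would fix $\delta=\epsilon r/4$ and choose a maximal subset $\mathcal N\subset B(e,2r)$ whose points are pairwise at distance $>\delta$. Two features of $\mathcal N$ are needed. By maximality, $\mathcal N$ is $\delta$-dense: every point of $B(e,2r)$, and in particular every $x_i$, lies within distance $\delta$ of some point $\pi(x_i)\in\mathcal N$. By the separation property, the balls $B(y,\delta/2)$, $y\in\mathcal N$, are pairwise disjoint; since $d(x,y)=\|x^{-1}y\|$ is left-invariant, each such ball has the same cardinality $V(\delta/2)$, and they all lie in $B(e,2r+\delta/2)$. Hence $\#\mathcal N\le V(2r+\delta/2)/V(\delta/2)$, and because the two radii $2r+\delta/2$ and $\delta/2$ differ by the fixed factor $16/\epsilon+1$, the doubling hypothesis bounds this ratio by a constant $K_\epsilon$ depending only on $\epsilon$ and $C_{VD}$ --- crucially independent of $r$.

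Finally I would pass from the guide to a short path. Projecting the guide onto $\mathcal N$ gives a walk $\pi(x_0),\dots,\pi(x_N)$ whose consecutive points satisfy $\|\pi(x_i)^{-1}\pi(x_{i+1})\|\le 2\delta+b\le \epsilon r$ (here is where $b\le\epsilon r/8$ enters), so every step has length at most $\epsilon r$; however this walk may revisit points of $\mathcal N$. I would then perform loop-excision: whenever a net point reappears, delete the intervening segment. Each excision preserves the bound $\le\epsilon r$ on all step-lengths (the new step copies an old one, since its two endpoints coincide) and strictly shortens the walk, so the process terminates in a walk visiting distinct points of $\mathcal N$, hence using at most $\#\mathcal N\le K_\epsilon$ points. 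Prepending the step $e\to\pi(x_0)$ and appending $\pi(x_N)\to x$ (both of length $\le\delta\le\epsilon r$) produces the desired sequence $(z_i)_0^M$ with $M\le \#\mathcal N+1\le K_\epsilon+1=:M_\epsilon$. The main obstacle is precisely the point the naive guide fails to address: securing a bound on the number of large steps that is uniform in $r$ and $x$. It is resolved by the packing estimate for $\mathcal N$ together with loop-excision, which trades the uncontrolled length $N$ for the controlled cardinality of a net.
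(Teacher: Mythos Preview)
Your proof is correct and follows essentially the same approach as the paper's: build a maximal $\epsilon r/4$-separated net in $B(e,2r)$, bound its cardinality via doubling, project the well-connected path onto the net, and extract a repetition-free subsequence of net points. Your write-up is in fact somewhat more explicit than the paper's, spelling out the loop-excision step and the packing estimate carefully.
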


\begin{proof}  Let $\{y_i: 1\le i\le M'\}$ be a maximal 
$\epsilon  r/4$-separated 
set of points in $B(e, 2r)=\{\|g\|\le 2r\}$. 
The ball $B_i=\{y_i, \epsilon r/9\}$ are disjoints and have volume 
$V(\epsilon r/9)$ comparable to $V(2r)$. Hence $M'\le M'_\epsilon$ for some 
finite $M'_\epsilon$ independent of $r$.   The union of the balls 
$B'_i=\{y_i,\epsilon r/4)$
covers $B(2r)$ (otherwise, $\{y_i: 1\le i\le M'\}$ would not be maximal).
In particular, these balls cover the sequence $(x_i)_0^N$ 
and we can extract a sequence $B^*_i= B'_{j_i}$, $1\le i\le M\le M'_\epsilon$,  
such that  $B^*_1\ni e$, $B^*_M \ni x$ and 
$$\inf\{\|h^{-1}g\|: h\in B^*_i,g\in B^*_{i+1}\}\le b.$$
Set $z_0=e$, $z_i=y_{j_i}$, $1\le i\le N$, $z_{N+1}=z$.
Then $\|z_{i-1}^{-1}z_i\|\le 2 \epsilon r/4 +b \le \epsilon r$ as desired.
\end{proof}

\begin{pro}
\label{pro-SC} Assume that the norm $\|\cdot\|$ is such that $V$ is doubling
and $\|\cdot\|$ is well-connected. Let $r$ be a positive continuous
increasing doubling function. Let $\nu$ be a symmetric probability measure
that is $(\|\cdot\|,r)$-controlled and satisfies $\nu(e)>0$ and a pointwise $%
(\|\cdot\|,r)$-pseudo-Poincar\'e inequality. Then $\nu$ is also strongly $%
(\|\cdot\|,r)$-controlled.
\end{pro}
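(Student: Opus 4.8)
The plan is to verify the two requirements of Definition \ref{controlled*} (strong control), working throughout with the discrete-time walk and invoking Remark \ref{rem-disc} whenever the tightness estimates of Section \ref{sec-Dav} are needed. Property 1 splits into an upper and a lower near-diagonal bound. The upper bound $\nu^{(2n)}(g)\le CV(r(n))^{-1}$ holds for \emph{every} $g$ and is immediate: by symmetry and Cauchy--Schwarz, $\nu^{(2n)}(g)=\sum_z\nu^{(n)}(z)\nu^{(n)}(z^{-1}g)\le\nu^{(2n)}(e)$, and $\nu^{(2n)}(e)\simeq V(r(n))^{-1}$ is exactly the first property of control. The matching lower bound is furnished by Theorem \ref{theo-PP}, whose hypotheses (doubling $V$, $\nu(e)>0$, $\nu^{(2n)}(e)\simeq V(r(n))^{-1}$, and the pseudo-Poincar\'e inequality) are all assumed here, but only in the regime $\|g\|\le\eta r(n)$, i.e.\ for $\kappa\le\eta$. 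The substance of property 1 is therefore to upgrade this to an arbitrary $\kappa$.

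To extend the lower bound, fix $\kappa$ and $g$ with $\|g\|\le\kappa r(n)$ and apply Lemma \ref{lem-cov} with radius $\kappa r(n)$ and separation parameter chosen so that consecutive points are at distance $\le(\eta/2)r(n)$; this gives a chain $e=z_0,\dots,z_M=g$ with $M\le M_\epsilon$ depending only on $\kappa$. Forcing the walk literally through the $z_i$ yields only the useless bound $V(r(n))^{-M}$, so instead I would restrict the Chapman--Kolmogorov expansion of $\nu^{(2n)}(g)$ to trajectories passing, at the equally spaced times $2n/M,\dots$, through the balls $B_i=B(z_i,\delta r(n))$ for a small fixed $\delta$. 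Each one-step factor then has the form $\nu^{(2n/M)}(w)$ with $\|w\|\le(\eta/2+2\delta)r(n)\le\eta\,r(n/M)$ (using that $r$ is doubling and $M$ bounded), hence is $\ge c\,V(r(n))^{-1}$ by Theorem \ref{theo-PP}; summing over the $\prod_i|B_i|\simeq V(r(n))^{M-1}$ choices of intermediate points, doubling of $V$ cancels all but one factor and gives $\nu^{(2n)}(g)\ge c^M V(r(n))^{-1}=c(\kappa)V(r(n))^{-1}$. Divisibility of $2n$ by $M$ and the small values of $n$ (where $\kappa r(n)<8b/\epsilon$) are absorbed via $\nu(e)>0$, exactly as at the end of the proof of Theorem \ref{theo-PP}. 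This integrated chaining --- engineering the count $V(r(n))^{M-1}$ to cancel the product $V(r(n))^{-M}$ --- is the main point, and I expect it to be the principal technical obstacle.

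For property 2, note that in the range $\tfrac12\rho(\tau/\gamma_1)\le n\le\rho(\tau/\gamma_1)$ the doubling of $r$ gives $\tau\simeq\gamma_1 r(n)$. Fix $x$ with $\|x\|\le\tau$, split $n=n'+(n-n')$ with $n'=\lfloor n/2\rfloor$, and choose $\gamma_1$ large. Conditioning on $X_{n'}$ and using the Markov property,
$$\mathbf{P}_x\Big(\sup_{k\le n}\|X_k\|\le\gamma_2\tau,\ \|X_n\|\le\tau\Big)\ge(1-\epsilon)\,\mathbf{P}_x\Big(\sup_{k\le n'}\|X_k\|\le\gamma_2\tau,\ X_{n'}\in B(e,\rho_0 r(n))\Big),$$
where the factor $1-\epsilon$ bounds below the inner probability uniformly for starting points $y$ with $\|y\|\le\rho_0 r(n)$: by Corollary \ref{cor-tight} (and Remark \ref{rem-disc}) the displacement of the walk from $e$ over $\le n/2$ steps is $\le\gamma r(n/2)\simeq\gamma r(n)$ with probability $\ge1-\epsilon$, and for $\gamma_1$ large both $\tau$ and $\gamma_2\tau$ dominate $\rho_0 r(n)+\gamma r(n/2)$. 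For the remaining factor the all-$\kappa$ bound of property 1 applies with $\kappa\simeq2\gamma_1$: since $\|x^{-1}z\|\le\tau+\rho_0 r(n)\le2\tau$ for $z\in B(e,\rho_0 r(n))$, we get $\mathbf{P}_x(X_{n'}\in B(e,\rho_0 r(n)))\ge V(\rho_0 r(n))\,c(2\gamma_1)V(r(n))^{-1}\ge c'>0$ by doubling; intersecting with the tube event costs at most $\mathbf{P}_x(\sup_{k\le n'}\|X_k\|>\gamma_2\tau)\le\epsilon$, which follows from tightness applied to the walk started at $x$ once $\gamma_2$ is large enough that $\gamma r(n')\le(\gamma_2-1)\tau$. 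Taking $\epsilon<c'/2$ yields the uniform bound $(1-\epsilon)c'/2>0$, which is property 2. The only delicacy is to fix the constants in the order $\gamma_1$ (large), then $\gamma_2$ (large), then $\epsilon$ (small), so that the near-diagonal constant $c(2\gamma_1)$ from property 1 remains fixed throughout.
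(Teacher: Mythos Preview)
Your proposal is correct and follows essentially the same route as the paper: for property~1 you chain via Lemma~\ref{lem-cov} through intermediate balls using the $\eta$-scale bound from Theorem~\ref{theo-PP}, and for property~2 you combine the all-$\kappa$ near-diagonal lower bound with tightness. One minor difference worth noting: your argument for property~2 is more elaborate than necessary. The paper does not split $n$ into two halves; it simply writes
\[
\mathbf{P}_x\!\left(\sup_{k\le n}\|X_k\|\le\gamma_2\tau,\ \|X_n\|\le\tau\right)
=\mathbf{P}_x(\|X_n\|\le\tau)-\mathbf{P}_x\!\left(\sup_{k\le n}\|X_k\|>\gamma_2\tau,\ \|X_n\|\le\tau\right),
\]
bounds the first term below by $\epsilon_0$ via the all-$\kappa$ lower bound (exactly as in your computation $\mathbf{P}_x(X_{n'}\in B(e,\rho_0 r(n)))\ge c'$, applied with target $B(e,\tau)$ and time $n$), and bounds the second term above by $\mathbf{P}_e(\sup_{k\le n}\|X_k\|\ge(\gamma_2-1)\tau)\le\epsilon_0/2$ directly from the tightness in Definition~\ref{controlled}. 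The conditioning on $X_{n'}$ and the appeal to Corollary~\ref{cor-tight} are not needed. Similarly, for property~1 the paper chains with $M$ blocks of length $n$ (total time $Mn$) and then passes back to time $n$ via $\nu(e)>0$ and doubling of $r$, rather than pre-dividing $2n$ into blocks of length $2n/M$; the two are equivalent.
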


\begin{proof} First, we show that for any $\kappa>0$ there exists 
$c_\kappa>0$ such that
$\|x\|\le \kappa r(n)$ implies
$$\nu^{(n)}(x) \ge c V(r(n))^{-1}.$$   
By Theorem \ref{theo-PP}, there exists 
$\eta$ such that  $\nu^{(n)}(x) \ge c_1 V(r(n))^{-1}$ for all 
$\|x\|\le \eta r(n)$.   By Lemma \ref{lem-cov}, 
for any fixed $\kappa$ there exists $M_\kappa$ such that for any  $\|x\|\le \kappa r(n)$ we can find $(z_i)_0^M$, $z_0=e$, $z_M=x$, $M\le M_\kappa$, such that  $\|z_{i}^{-1}z_{i+1}\|\le \eta r(n)/4$.
Write $B_i=\{\|z_i^{-1}g\|\le \eta r(n)/4\}$ and 
\begin{eqnarray*}
\nu^{(M n)}(x)&\ge &\sum_{(y_1,\dots, y_{M})\in \otimes_1^M B_i}
\nu^{(n)}(y_1) \cdots  \nu^{(n)}(y^{-1}_iy_{i+1}) \dots \nu^{(n)}(y_{M}^{-1}x)
\\
&\ge & c_1^{M+1} V(\eta r(n)/4)^MV(r(n))^{-M-1}\simeq c'_1 V(r(n))^{-1}.
\end{eqnarray*}
Since $\nu(e)>0$, this shows that
$\|x\|\le \kappa r(n)$ implies
$\nu^{(n)}(x) \ge c V(r(n))^{-1}$ as stated.    
In particular, for any fixed $\kappa$, there exists $\epsilon>0$ such that 
for any $x,n$ with $\kappa r(n)\le \tau$ and $\|x\|\le \tau$,
$$ \mathbf P_x(\|X_n\|\le \tau)\ge \epsilon.$$ 
Now,  fix  $\gamma_1\in (1,\infty)$. Let $\epsilon_0>0$ be such that,
for any $x$, $n$, $\tau$ with   $$\|x\|\le \tau \le 
\gamma_1 r(2n),$$ we have  $\mathbf P_x(\|X_n\|\le \tau)\ge \epsilon_0.$ 
Let $\gamma\ge 1$ be given by Definition \ref{controlled} so that
$$\mathbf P_e\left(\sup_{k\le n}\{\|X_k\|\}\ge \gamma 
r(n)\right)\ge \epsilon_0/2.$$  
Set $\gamma_2=\gamma/\gamma_1+1$ and, for any $x,n,\tau$ with $\|x\|\le \tau$ and
$\frac{1}{2}\rho(\tau/\gamma_1)\le n\le \rho(\tau/\gamma_1)$, 
write
\begin{eqnarray*} 
\lefteqn{\mathbf P_x\left( \sup_{k\leq n}\{\left\Vert X_{k}\right\Vert\} 
\leq \gamma_2 \tau ,\left\Vert X_{n}\right\Vert \leq \tau\right)} \hspace{.5in} && \\
&=&\mathbf P_x\left( \left\Vert X_{t}\right\Vert \leq \tau\right)
-\mathbf P_x\left( \sup_{k\leq n}\left\Vert X_{k}\right\Vert \geq \gamma_2
\tau,\left\Vert X_{n}\right\Vert \leq \tau\right) \\
&\geq & 
\epsilon_0
-\mathbf P_e\left( \sup_{k\leq n}\left\Vert X_{k}\right\Vert \geq \gamma
\tau/\gamma_1 \right) \ge \epsilon_0/2.
\end{eqnarray*}%
This proves that $\nu$ is  strongly
$(\|\cdot\|,r)$-controlled.  \end{proof} 

As a simple illustration of these techniques, consider the case of of an
arbitrary symmetric measure $\nu$ with generating support and finite second
moment (with respect to the word-length $|\cdot|$) on a group with
polynomial volume growth of degree $D(G)$. It follows from \cite{PSCstab}
that $\nu^{(n)}(e)\simeq n^{-D(G)/2}$ and satisfies a pointwise classical
pseudo-Poincar\'e inequality (with $\rho(t)=t^2$). Proposition \ref{pro-SC} yields the following
result.

\begin{theo}
\label{th-eq5} Let $G$ be a finitely generated group with polynomial volume
growth with word-length $|\cdot|$. Assume that $\nu$ is symmetric, satisfies 
$\mu(e)>0$, has generating support and satisfies $\sum|g|^2\nu(g)<\infty$.
Then $\nu$ is strongly $(|\cdot|, t\mapsto t^{1/2})$-controlled.
\end{theo}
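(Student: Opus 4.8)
The plan is to verify that the measure $\nu$ satisfies all the hypotheses of Proposition \ref{pro-SC} and then invoke that proposition with the word-length norm $|\cdot|$ and the scaling function $r(t)=t^{1/2}$. There are four things to check: that $V$ is doubling and $|\cdot|$ is well-connected (geometric facts about the word metric), that $r(t)=t^{1/2}$ is doubling (trivial), that $\nu$ is $(|\cdot|,t\mapsto t^{1/2})$-controlled, and that $\nu$ satisfies the pointwise $(|\cdot|,t\mapsto t^{1/2})$-pseudo-Poincar\'e inequality. The inverse function here is $\rho(t)=t^2$, so the pseudo-Poincar\'e inequality reads $\sum_x |f(xg)-f(x)|^2 \le C|g|^2\,\mathcal E_\nu(f,f)$, which is the \emph{classical} pseudo-Poincar\'e inequality.

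First I would dispose of the geometric hypotheses. Since $G$ has polynomial volume growth, $V_S$ is doubling, and the word-length norm is well-connected with $b=1$: any $x$ with $|x|\le r$ is, by definition of word length, a product $u_1\cdots u_N$ of generators, so setting $x_i=u_1\cdots u_i$ gives a path from $e$ to $x$ with unit steps $|x_i^{-1}x_{i+1}|=1$ and $|x_i|\le i\le r$. Thus Definition \ref{wellc} holds (indeed with the sharper bound $|x_i|\le r$ rather than $2r$). This is routine and I would state it briefly.

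The substantive inputs both come from the cited work \cite{PSCstab} on random walks with finite second moment on groups of polynomial volume growth. That reference supplies the two-sided on-diagonal bound $\nu^{(2n)}(e)\simeq n^{-D(G)/2}\simeq V_S(\sqrt n)^{-1}=V_S(r(n))^{-1}$, which is the first requirement of $(|\cdot|,r)$-control (Definition \ref{controlled}, item 1). For the second requirement — the tightness estimate $\mathbf P_e(\sup_{k\le n}|X_k|\ge \gamma\sqrt n)\le\epsilon$ — I have two options: either cite it directly from the finite-second-moment theory, or derive it from Proposition \ref{tightness} using the on-diagonal bound $m(n)=n^{-D/2}$ together with the finiteness of the second moment (which makes $\delta_R$ and $\mathcal G(R)$ well-behaved and lets me take $R(t)$ large); I would take whichever is cleaner, most likely the direct citation since the classical finite-second-moment setting is exactly where such maximal inequalities are standard. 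Finally, \cite{PSCstab} also yields the classical pointwise pseudo-Poincar\'e inequality with $\rho(t)=t^2$; this is where the finite second moment is genuinely used, since $\sum_x|f(xg)-f(x)|^2$ is controlled by $|g|^2\mathcal E_\nu(f,f)$ precisely when $\nu$ has a finite second moment (one interpolates $f(xg)-f(x)$ along a geodesic from $e$ to $g$ of length $|g|$ and sums the one-step Dirichlet-form contributions).

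With all four hypotheses in hand, Proposition \ref{pro-SC} immediately gives that $\nu$ is strongly $(|\cdot|,t\mapsto t^{1/2})$-controlled, which is the desired conclusion. The main obstacle, such as it is, is not really a difficulty in the proof but rather making sure the two inputs from \cite{PSCstab} are cited in the exact form needed — namely the two-sided on-diagonal estimate and the classical pseudo-Poincar\'e inequality — since everything else is either trivial geometry or a direct application of the machinery already assembled in the preceding sections. In effect this theorem is a \emph{consistency check}: it recovers the classical Gaussian-type behavior (scaling $\sqrt n$) as the $\beta\to 2^-$ (finite second moment) endpoint of the family treated by Theorem \ref{th-phi}, confirming that the notion of strong control specializes correctly to the well-understood case.
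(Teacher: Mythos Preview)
Your proposal is correct and follows essentially the same route as the paper: cite \cite{PSCstab} for the two-sided on-diagonal bound $\nu^{(n)}(e)\simeq n^{-D(G)/2}$ and for the classical pseudo-Poincar\'e inequality with $\rho(t)=t^2$, then apply Proposition~\ref{pro-SC}. You are in fact more careful than the paper, which leaves the tightness half of $(|\cdot|,\sqrt{t})$-control implicit; your observation that it can be obtained either by direct citation or via Proposition~\ref{tightness} (using that $\mathcal G(R)$ is bounded by the finite second moment) is exactly right.
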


\section{Measures supported on powers of generators}\label{sec-pow}

\setcounter{equation}{0}

\subsection{The measure $\protect\mu_{S,a}$}

In this subsection we consider the special case when $G$ is a nilpotent
group equipped with a generating $k$-tuple $S=(s_1,\dots,s_k)$ and 
\begin{equation}  \label{JmuSa}
J(x,y)=\mu_{S,a}(x^{-1}y),\;\;a=(\alpha_1,\dots,\alpha_k)\in (0,\infty)^k
\end{equation}
with $\mu_{S,a}$ given by (\ref{muSa}). Our aim is to prove the first 
statement in Theorem \ref{th-muSa}. The study of the random walks driven
by this class of measure was initiated by the authors in \cite{SCZ-nil} and
we will refer to and use some of the main results of \cite{SCZ-nil}.

Following \cite[Definition 1.3]{SCZ-nil}, let $\mathfrak{w}$ be the power
weight system on the formal commutators on the alphabet $S$ associated with
setting $w_i=1/\widetilde{\alpha}_i$, $\widetilde{\alpha}_i=\min\{\alpha_i,2%
\}$. Namely, The weight of any commutator $c$ using the sequence of letters $%
(s_{i_1},\dots,s_{i_m})$ from $S$ (or their formal inverse) is $%
w(c)=\sum_1^m w_{i_j}$. In \cite{SCZ-nil}, the authors proved the following
result.

\begin{theo}[\protect\cite{SCZ-nil}]
\label{th-stab} Referring to the above setting and notation, assume that the
subgroup of $G$ generated by $\{ s_i: \alpha_i<2\}$ is of finite index. Then
there exists a real $D_{S,a}=D(S,\mathfrak{w})$ such that 
\begin{equation*}
Q_{S,a}(r) \simeq r^{D_{S,a}},\;\; \mu_{S,a}^{(n)}(e)\simeq n^{-D_{S,a}}.
\end{equation*}
The real $D_{S,a}=D(S,\mathfrak{w})$ is given by \emph{\cite[Definition 1.7]%
{SCZ-nil}}. Further, there exists a $k$-tuple $b=(\beta_1,\cdots,\beta_k)\in
(0,2)^k$ such that $\beta_i=\alpha_i$ is $\alpha_i<2$, $D(S,a)=D(S,b)$, and 
\begin{equation*}
\forall\, g\in G, \|g\|_{S,a}^{\alpha_*}\simeq \|g\|_{S,\beta}^{\beta_*}.
\end{equation*}
In addition, $\mu_{S,a}$ satisfies a pointwise $(\|\cdot\|_{S,a},t\mapsto
t^{1/\alpha_*})$-pseudo-Poincar\'e inequality.
\end{theo}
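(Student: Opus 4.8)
The plan is to establish the four assertions in turn, all resting on the dilation structure that the power weight system $\mathfrak{w}$ attaches to the associated graded of $G$. First I would fix a Malcev-type basis $(\xi_1,\dots,\xi_q)$ adapted to the lower central series and use $\mathfrak{w}$ to assign to each basis commutator $c=c(s_{i_1},\dots,s_{i_m})$ its weight $w(c)=\sum_\ell w_{i_\ell}$, together with the one-parameter family of dilations scaling the coordinate of $\xi_\ell$ by $t^{w(\xi_\ell)}$. The degree $D_{S,a}=D(S,\mathfrak{w})$ of \cite[Definition 1.7]{SCZ-nil} is then the sum of the weights of the basis commutators, a weighted analogue of the Bass--Guivarc'h formula. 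To prove $Q_{S,a}(r)\simeq r^{D_{S,a}}$, equivalently $\#\{g:\|g\|_{S,a}\le R\}\simeq R^{\alpha_* D_{S,a}}$, I would show that $\|\cdot\|_{S,a}^{\alpha_*}$ is comparable to the homogeneous quasi-norm of this dilation structure and then count group elements in a dilated ball by the usual Jacobian/lattice-point argument, which produces the exponent $\alpha_* D_{S,a}$. The comparison is where the combinatorics enter: starting from a representation $g=\prod_j s_{i_j}^{\epsilon_j}$ realizing the budget $\#\{j:i_j=i\}\le R^{\alpha_*/\widetilde\alpha_i}$ of \eqref{normSa}, one runs the commutator collecting process and checks that each Malcev coordinate of $g$ is bounded by the power of $R$ prescribed by its weight. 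I expect this geometric comparison to be the main obstacle: it is exactly the delicate interplay between the sub-additive budget in \eqref{normSa} and commutator collecting in the nilpotent group, and it is the technical core to which the bulk of \cite{SCZ-nil} is devoted.

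Next I would turn to the pseudo-Poincar\'e inequality, i.e. $\sum_x|f(xg)-f(x)|^2\le C\,\|g\|_{S,a}^{\alpha_*}\,\mathcal{E}_{\mu_{S,a}}(f,f)$, corresponding to $\rho(t)=t^{\alpha_*}$. The mechanism is telescoping combined with a one-generator estimate. For a single generator the part of $\mu_{S,a}$ living on the powers of $s_i$ is an $\alpha_i$-stable-type step, so the classical $1$-D pseudo-Poincar\'e argument gives $\sum_x|f(xs_i^{M})-f(x)|^2\le C|M|^{\widetilde\alpha_i}\,\mathcal{E}_{\mu_{S,a}}(f,f)$, the cap at $2$ reflecting the passage to a finite second moment once $\alpha_i\ge2$. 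Using a normal-form representation adapted to the weight system, I would group the displacement by $g$ into blocks $s_i^{M_i}$ (together with the commutators they generate) with $|M_i|\lesssim\|g\|_{S,a}^{\alpha_*/\widetilde\alpha_i}$, apply the one-generator bound to each block, and note that $|M_i|^{\widetilde\alpha_i}\lesssim\|g\|_{S,a}^{\alpha_*}$ because $\widetilde\alpha_i\cdot(\alpha_*/\widetilde\alpha_i)=\alpha_*$. Summing over the $k$ generators collapses every term to a multiple of $\|g\|_{S,a}^{\alpha_*}\,\mathcal{E}_{\mu_{S,a}}(f,f)$. The nonabelian complication, namely that the blocks cannot be freely reordered and the commutator corrections must be carried along with the same weighted bookkeeping, is again handled by the dilation structure from the first step.

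For the on-diagonal estimate $\mu_{S,a}^{(n)}(e)\simeq n^{-D_{S,a}}$ I would split into the two bounds. The upper bound follows from the volume lower bound $V_{S,a}(R)\gtrsim R^{\alpha_* D_{S,a}}$ together with the scaling of the Dirichlet form $\mathcal{E}_{\mu_{S,a}}$: these combine into a Faber--Krahn/Nash inequality which, fed into the standard Nash iteration, yields $\mu_{S,a}^{(n)}(e)\le Cn^{-D_{S,a}}$. For the matching lower bound the cleanest route in the present framework is to invoke the pseudo-Poincar\'e inequality just proved: it supplies \emph{control} in the sense of Definition~\ref{controlled} via the machinery of Section~\ref{sec-Dav}, and Proposition~\ref{tightness} then gives $\mu_{S,a}^{(n)}(e)\ge c\,V_{S,a}(n^{1/\alpha_*})^{-1}\simeq c\,n^{-D_{S,a}}$.

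Finally, for the normalization producing the tuple $b$, I would use the standing hypothesis that $\{s_i:\alpha_i<2\}$ generates a finite-index subgroup, so that every diffusive generator ($\alpha_i\ge2$, hence $\widetilde\alpha_i=2$) can be rewritten through the jumpy ones. For each such $i$ I would choose $\beta_i\in(0,2)$ close enough to $2$ that a direct step $s_i^m$ never beats a path reaching $s_i^m$ through the finite-index subgroup generated by the jumpy generators; setting $\beta_i=\alpha_i$ when $\alpha_i<2$. A budget comparison then shows that the diffusive generators are invisible to the geometry once the appropriate power normalization is applied, giving $\|g\|_{S,a}^{\alpha_*}\simeq\|g\|_{S,\beta}^{\beta_*}$, and that the dominant contributions to the weighted volume come only from the jumpy generators, so $D(S,a)=D(S,b)$.
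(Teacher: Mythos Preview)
This theorem is not proved in the present paper: it is quoted verbatim from \cite{SCZ-nil} (note the citation in the theorem header and the sentence ``In \cite{SCZ-nil}, the authors proved the following result'' immediately preceding it). So there is no paper proof to compare against here; the paper uses Theorem~\ref{th-stab} as a black-box input and then combines it with Lemma~\ref{lem-stab} and the machinery of Sections~\ref{sec-Dav}--\ref{sec-psc} to obtain Theorem~\ref{th-muSa}. Your outline for the volume growth, the construction of the tuple $b$, and the pseudo-Poincar\'e inequality is a reasonable sketch of the kind of argument carried out in \cite{SCZ-nil}, and you correctly identify the commutator-collecting/weighted-dilation comparison as the technical heart.

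There is, however, a genuine confusion in your treatment of the lower bound $\mu_{S,a}^{(n)}(e)\ge c\,n^{-D_{S,a}}$. You write that the pseudo-Poincar\'e inequality ``supplies control in the sense of Definition~\ref{controlled} via the machinery of Section~\ref{sec-Dav}, and Proposition~\ref{tightness} then gives\ldots''. This misidentifies the mechanism. Proposition~\ref{tightness} \emph{is} the Section~\ref{sec-Dav} machinery, and it does not use pseudo-Poincar\'e at all: its inputs are the on-diagonal upper bound~\eqref{odub} together with explicit estimates on $\delta_R$ and $\mathcal{G}(R)$ (cf.\ \eqref{def-dG}). In the present setting those estimates are the content of Lemma~\ref{lem-stab}, which you do not mention and which in turn requires knowing $\|s_i^n\|_{S,b}\simeq |n|^{\beta'_i/\beta_*}$ from the norm geometry. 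The pseudo-Poincar\'e inequality enters only \emph{after} control is established, in Proposition~\ref{pro-SC}, to upgrade control to strong control. So the correct logical flow for the lower bound is: upper bound $+$ $\delta_R,\mathcal{G}(R)$ estimates $\Rightarrow$ Proposition~\ref{tightness} $\Rightarrow$ control (hence the on-diagonal lower bound); then control $+$ pseudo-Poincar\'e $\Rightarrow$ Proposition~\ref{pro-SC} $\Rightarrow$ strong control. Note also that the introduction attributes only the \emph{upper} bound to \cite{SCZ-nil}; the matching lower bound is precisely what the present paper supplies via this route as part of Theorem~\ref{th-muSa}.
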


The volume estimate $Q_{S,a}(r) \simeq Q_{S,b}(r)\simeq r^{D_{S,a}}$ shows,
in particular, that $(G,\|\cdot\|_{S,b})$ has the volume doubling property.
The upper bound $\mu_{S,a}^{(n)}(e)\le C n^{-D_{S,a}}$ implies that the
continuous time process with jump kernel $J$ defined above satisfies 
\begin{equation*}
\forall\,t>0, x\in G,\;\;p(t,x,x)\le m(t)=Ct^{-D_{S,a}}.
\end{equation*}

Note that $\|\cdot\|$ is clearly well-connected (Definition \ref{wellc}). In
order to apply Propositions \ref{tightness} and \ref{pro-SC} to the present
case and prove Theorem \ref{th-muSa}, it clearly suffices to prove the
following lemma which provides estimates for $\delta_R$ and $\mathcal{G}(R)$.

\begin{lem}\label{lem-stab}
Referring to the setting and hypotheses of \emph{Theorem \ref{th-stab}}, for 
$J$ given by \emph{(\ref{JmuSa})}, let $\|\cdot\|=\|\cdot\|_{S,b}$, $%
D=D_{S,b}=D_{S,a}$, we have 
\begin{eqnarray*}
V(r) &=&\#\{g\in G: \|g\|\le r\} \simeq r^{D\beta _*}, \\
\delta _{R} &\simeq &R^{-\beta _{*}}, \\
\mathcal{G}(R) &\simeq &R^{2-\beta_*}.
\end{eqnarray*}
\end{lem}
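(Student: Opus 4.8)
The plan is to reduce all three estimates to the one-dimensional scaling of the norm $\|\cdot\|_{S,b}$ along the cyclic subgroups $\langle s_i\rangle$, since $\mu_{S,a}$ is supported on the powers of the generators. First I would settle the volume estimate directly from Theorem \ref{th-stab}. Writing $V_{S,a}(\rho)=\#\{\|g\|_{S,a}\le\rho\}$, the identity $Q_{S,a}(r)=V_{S,a}(r^{1/\alpha_*})\simeq r^{D}$ gives $V_{S,a}(\rho)\simeq\rho^{\alpha_*D}$. Theorem \ref{th-stab} also supplies $\|g\|_{S,b}\simeq\|g\|_{S,a}^{\alpha_*/\beta_*}$, so $\{\|g\|_{S,b}\le r\}$ is trapped between two $\|\cdot\|_{S,a}$-balls of radii comparable to $r^{\beta_*/\alpha_*}$; because $V_{S,a}$ is a pure power the multiplicative constants wash out and
\[
V(r)=\#\{\|g\|_{S,b}\le r\}\simeq V_{S,a}(r^{\beta_*/\alpha_*})\simeq r^{D\beta_*}.
\]

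The crux is the two-sided estimate $\|s_i^m\|_{S,b}\simeq|m|^{\widetilde{\alpha}_i/\beta_*}$. The upper bound is immediate: writing $s_i^m$ as a product of $|m|$ copies of $s_i^{\pm1}$ shows $\|s_i^m\|_{S,a}\le|m|^{\widetilde{\alpha}_i/\alpha_*}$ from the definition (\ref{normSa}), and raising to the power $\alpha_*/\beta_*$ through the norm equivalence gives $\|s_i^m\|_{S,b}\lesssim|m|^{\widetilde{\alpha}_i/\beta_*}$. The matching lower bound --- that $s_i^m$ cannot be produced appreciably more cheaply using commutator shortcuts --- is where the detailed geometry of \cite{SCZ-nil} is genuinely needed, and I regard it as the main obstacle. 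For a generator whose image in the abelianization is a free direction one gets it elementarily by projecting any representing word onto that coordinate (forcing at least $|m|$ occurrences of $s_i$); in general it must be extracted from the analysis of $\|\cdot\|_{S,a}$ in \cite{SCZ-nil}.

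With this scaling available, $\delta_R$ and $\mathcal{G}(R)$ are direct sums over the support of $\mu_{S,a}$. For the tail, $\|s_i^m\|_{S,b}>R$ is equivalent to $|m|\gtrsim R^{\beta_*/\widetilde{\alpha}_i}$, so summing the tails of $\sum_m(1+|m|)^{-1-\alpha_i}$ yields
\[
\delta_R=\frac1k\sum_{i=1}^k\kappa_i\!\!\sum_{\|s_i^m\|_{S,b}>R}\!\!\frac{1}{(1+|m|)^{1+\alpha_i}}\simeq\sum_{i=1}^kR^{-\alpha_i\beta_*/\widetilde{\alpha}_i}.
\]
Since $\alpha_i/\widetilde{\alpha}_i=1$ for $\alpha_i\le2$ and exceeds $1$ for $\alpha_i>2$, every generator with $\alpha_i\le2$ contributes exactly $R^{-\beta_*}$ while those with $\alpha_i>2$ contribute strictly lower-order terms; as generators with $\alpha_i<2$ exist by hypothesis, $\delta_R\simeq R^{-\beta_*}$. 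For $\mathcal{G}(R)$ I would bound $\|s_i^m\|_{S,b}^2$ above by $\min\{R^2,|m|^{2\widetilde{\alpha}_i/\beta_*}\}$ (using only the norm upper bound) and split the sum at $|m|\simeq R^{\beta_*/\widetilde{\alpha}_i}$; both pieces are of power type and the $i$-th term is $\simeq R^{2-\beta_*\alpha_i/\widetilde{\alpha}_i}$. Exactly as before, the generators with $\alpha_i\le2$ give the dominant exponent $2-\beta_*$, those with $\alpha_i>2$ give $R^{2-\beta_*\alpha_i/2}$ of strictly lower order (or $O(1)$ when the partial sum converges), and the matching lower bound comes from a single $\alpha_i<2$ generator via the scaling of the previous paragraph, so $\mathcal{G}(R)\simeq R^{2-\beta_*}$.
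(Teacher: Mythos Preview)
Your overall structure mirrors the paper's proof: derive the volume estimate from Theorem~\ref{th-stab}, then reduce $\delta_R$ and $\mathcal{G}(R)$ to one-dimensional sums along each $\langle s_i\rangle$ via a two-sided scaling of $\|s_i^m\|$. The gap is in that scaling. You assert $\|s_i^m\|_{S,b}\simeq|m|^{\widetilde{\alpha}_i/\beta_*}$; only the upper bound holds in general. The correct statement, taken from \cite[Proposition~2.17]{SCZ-nil} and used in the paper, is $\|s_i^m\|_{S,b}\simeq|m|^{\beta'_i/\beta_*}$ for some $0<\beta'_i\le\beta_i$ that may be strictly smaller than $\widetilde{\alpha}_i$. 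Concretely, in the Heisenberg group with generators $s_1,s_2,s_3=[s_1,s_2]$ and $\alpha_1=\alpha_2=\alpha_3=1$, one has $\widetilde{\alpha}_3=\beta_3=1$ but $\|s_3^m\|\simeq|m|^{1/2}$, so $\beta'_3=\tfrac12$; your claimed lower bound $\|s_3^m\|\gtrsim|m|$ fails.

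This does not damage your upper bounds: from the valid inequality $\|s_i^m\|\le C|m|^{\widetilde{\alpha}_i/\beta_*}$ you correctly get $\{m:\|s_i^m\|>R\}\subseteq\{|m|\gtrsim R^{\beta_*/\widetilde{\alpha}_i}\}$ and hence $\delta_R\lesssim R^{-\beta_*}$, and similarly $\mathcal{G}(R)\lesssim R^{2-\beta_*}$. But your lower bounds break: with the true exponent the $i$-th tail contributes $R^{-\beta_*\alpha_i/\beta'_i}$, which equals $R^{-\beta_*}$ only when $\alpha_i=\beta'_i$, not for every $i$ with $\alpha_i<2$ as you claim. The paper closes this by invoking two facts from \cite{SCZ-nil}: that $\alpha_i\ge\beta'_i$ for every $i$ (so each term is $O(R^{-\beta_*})$), and that equality $\alpha_i=\beta'_i$ is attained for at least one $i$ (so the dominant term really is $R^{-\beta_*}$). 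Your elementary abelianization argument does supply such a generator in favourable cases, but in general both facts require the structural analysis of \cite{SCZ-nil}, and the exponent fed into the computation must be $\beta'_i$, not $\widetilde{\alpha}_i$.
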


\begin{proof} The volume estimate follows immediately from Theorem \ref{th-stab}.
Let $\mathfrak v$ be the power weight system associated with $b$
(in particular, $v_i=1/\beta_i >1/2$). 
By \cite[Proposition 2.17]{SCZ-nil},  for each $i$ there exists 
$0<\beta'_i\le\beta_i\le \beta_*<2$ such that
\[ 
\left\Vert s_{i}^{n}\right\Vert \simeq |n|^{\beta'_i/\beta_*}. 
\]%
In the notation of \cite{SCZ-nil}, 
$\beta_i= \overline{v}_{j_{\mathfrak v}(s_i)}$.We have
\begin{eqnarray*}
\bigskip \delta _{R} &=&\sum_{\left\Vert x\right\Vert >R}\mu
_{S,a}(x) 
=\sum_{i=1}^{k}\sum_{\left\Vert s_{i}^{n}\right\Vert>R}\frac{\kappa_i}{(1+|n|)^{1+\alpha_i}}\\
&\simeq &\sum_{i=1}^{k}\sum_{n >R^{\beta_*/\beta'_i}}
\frac{\kappa_i}{(1+|n|)^{1+\alpha_i}}
\simeq \sum_{i=1}^{k} R^{-\beta_*\alpha_i/\beta'_i}
\simeq R^{-\beta_*}.
\end{eqnarray*}%
The last estimate use that fact that there must be some  
$i\in \{1,\dots, k\}$ such that
$\alpha_i=\beta'_i$ and that, always, $\alpha_i\ge \beta'_i$.

Similarly, since $\alpha_i\ge \beta'_i$ and $\beta_*<2$, we have
$2\beta'_i/\beta_*-\alpha_i>0$. This yields
\begin{eqnarray*}
\mathcal{G}(R) &=&\sum_{\left\Vert x\right\Vert \le R}\left\Vert
x\right\Vert ^{2}\mu _{S,a}(x) 
=\sum_{i=1}^{k}\sum_{\left\Vert s_{i}^{n}\right\Vert \leq
R}\frac{\kappa_i \left\Vert s_{i}^{n}\right\Vert^{2}}{(1+|n|)^{1+\alpha_i}} \\
&\simeq&\sum_{i=1}^{k}\sum_{0\le n \leq
R^{\beta_*/\beta'_i}}\frac{\kappa_i |n|^{2\beta'_i/\beta_*}}{(1+|n|)^{1+\alpha_i}}
\simeq \sum_{i=1}^{k}R^{2-\beta_*\alpha_i/\beta'_i} 
\simeq R^{2-\beta_*}.
\end{eqnarray*}
This proves Lemma \ref{lem-stab}.
\end{proof}

\subsection{Some regular variation variants of $\protect\mu_{S,a}$}

Consider the class of measure $\mu$ of the form 
\begin{equation}  \label{slow-li}
\mu(g)= \frac{1}{k}\sum_1^k\sum_{m\in \mathbb{Z}} \frac{\kappa_i \ell_i(|n|)%
}{(1+|n|)^{1+\alpha_i}}
\end{equation}
where each $\ell_i$ is a positive slowly varying function satisfying $%
\ell_i(t^b)\simeq \ell_i(t)$ for all $b>0$ and $\alpha_i\in (0,2)$. For each 
$i$, let $F_i$ be the inverse function of $r\mapsto r^{\alpha_i}/\ell_i(r)$.
Note that $F_i$ is regularly varying of order $1/\alpha_i$ and that $%
F_i(r)\simeq [r\ell_i(r)]^{1/\alpha_i}$, $r\ge 1$, $i=1,\dots,k$. We make
the fundamental assumption that the functions $F_i$ have the property that
for any $1\le i,j\le k$, either $F_i(r)\le C F_j(r)$ of $F_j(r)\le CF_i(r)$.
For instance, this is clearly the case if all $\alpha_i$ are distinct. Set $%
a=(\alpha_1,\dots,\alpha_k)\in (0,2)^k$ and consider also the power weight
system $\mathfrak{v}$ generated by $v_i=1/\alpha_i$, $1\le i\le k$, as in 
\cite[Definition 1.3]{SCZ-nil}. Fix $\alpha_0\in (0,2)$ such that 
\begin{equation*}
\alpha_0>\max\{\alpha_i: 1\le i\le k\}
\end{equation*}
and $\alpha_0/\alpha_i \not\in \mathbb{N}$, $i=1,\dots, k$. Observe that
there are convex functions $K_i\ge 0$, $i=0,\dots,k$, such that $K_i(0)=0$
and 
\begin{equation}  \label{conv}
\forall r\ge 1,\;\; F_i(r^{\alpha_0}) \simeq K_i(r).
\end{equation}
Indeed, $r\mapsto F_i(r^{\alpha_0})$ is regularly varying of index $%
\alpha_0/\alpha_i$ with $1<\alpha_0/\alpha_i\not\in \mathbb{N}$. By \cite[%
Theorems 1.8.2-1.8.3]{BGT} there are smooth positive convex functions $%
\tilde{K}_i$ such that $\tilde{K}_i(r)\sim F_i(r^{\alpha_0})$. If $\tilde{K}%
_i(0)>0$, it is easy to construct a convex function $K_i:[0,\infty)%
\rightarrow [0,\infty)$ such that $K_i\simeq \widetilde{K}_i$ on $[1,\infty)$
and $K_i(0)=0$. Let use $\mathfrak{K}$ to denote the collection $(K_i)_1^r$.

Now, set 
\begin{equation*}
\|g\|=\|g\|_{\mathfrak{K}} = \min\left\{ r: g= \prod _{j=1}^m
s_{i_j}^{\epsilon_j}: \epsilon_j=\pm 1,\;\; \#\{j: i_j =i\}\le
K_i(r)\right\}.
\end{equation*}
Because of the convexity property of the $K_i$, $\|\cdot\|$ is a norm. Note
also that it is well-connected. The following theorem is proved in \cite%
{SCZ-nil}.

\begin{theo}
Referring to the above notation and hypothesis, there exists a real $%
D=D_{S,a}=D(S,\mathfrak{v})$ and a positive slowly varying function $L$
(explicitly given in \emph{\cite[Theorem 5.15]{SCZ-nil}} and which satisfies
$L(t^a)\simeq L(t)$ for all $a>0$) such that:

\begin{itemize}
\item For all $r\ge 1$, $V(r)=\#\{g:\|g\|\le r\}\simeq r^{\alpha_0D}L(r)$

\item For a each $1\le i\le k$, there exists a regularly varying
function $\widetilde{F}_i$ such that $%
\|s_i^n\|^{\alpha_0}\leq C \widetilde{F}^{-1}_i(n)$ where $\widetilde{F}%
_i\ge F_i$ and with equality for some $1\le i \le r$.

\item For all $n\ge 1$, $\mu^{(2n)}(e)\le C(n^{D} L(n))^{-1}.$

\item The measure $\mu$ satisfies a pointwise $(\|\cdot\|,t\mapsto
t^{1/\alpha_0})$-pseudo-Poincar\'e inequality.
\end{itemize}
\end{theo}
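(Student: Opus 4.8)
The plan is to treat the measure $\mu$ of (\ref{slow-li}) as a slowly varying perturbation of the pure-power measure $\mu_{S,a}$ and to run the geometric analysis of \cite{SCZ-nil} for the weight system $\mathfrak v$ ($v_i=1/\alpha_i$), now with the convex collection $\mathfrak K=(K_i)$ playing the role that the pure powers $r\mapsto r^{\alpha_*/\widetilde\alpha_i}$ play in the definition (\ref{normSa}). The structural point is that $\|\cdot\|_{\mathfrak K}$ is engineered through $F_i(r^{\alpha_0})\simeq K_i(r)$ so that $\|g\|^{\alpha_0}$ behaves exactly as $\|g\|_{S,a}^{\alpha_*}$ does in Theorem \ref{th-stab}, namely as a weighted word-length for $\mathfrak v$. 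Since the $K_i$ are convex, $\|\cdot\|_{\mathfrak K}$ is a norm and is well-connected (as already noted above), so that Lemma \ref{lem-cov} and the machinery of Sections \ref{sec-Dav} and \ref{sec-psc} become available once the on-diagonal and pseudo-Poincar\'e inputs are in hand.

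For the single-generator estimate (second bullet), the direct representation $s_i^n=s_i\cdots s_i$ uses $n$ copies of $s_i$ and is therefore admissible at radius $K_i^{-1}(n)$, which gives the baseline bound $\|s_i^n\|^{\alpha_0}\le C F_i^{-1}(n)$. The content of the statement is the refinement to a possibly smaller exponent governed by $\widetilde F_i\ge F_i$: in a nilpotent group, commutator relations may allow one to build $s_i^n$ from cheaper generators, so that the effective cost is $\widetilde F_i^{-1}(n)\le F_i^{-1}(n)$, with equality holding precisely for the generators that cannot be shortcut. I would obtain this from the stability analysis of one-parameter subgroups in \cite{SCZ-nil}, the analogue of \cite[Proposition 2.17]{SCZ-nil}.

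The volume estimate (first bullet) is exactly the counting computation of \cite[Theorem 5.15]{SCZ-nil}: enumerating $\{\|g\|_{\mathfrak K}\le r\}$ amounts to counting group elements of weighted length at most $r^{\alpha_0}$ for $\mathfrak v$, which produces the homogeneous dimension $D=D(S,\mathfrak v)$, while the slowly varying factors $\ell_i$ accumulate through the Malcev coordinates into the single slowly varying correction $L$ with $L(t^a)\simeq L(t)$. Granting $V(r)\simeq r^{\alpha_0 D}L(r)$, the on-diagonal bound (third bullet) follows the standard route: one establishes a Nash-type inequality for $\mathcal E_\mu$ matched to this volume profile and runs the standard Nash/Coulhon iteration, yielding $\mu^{(2n)}(e)\le C\,V(n^{1/\alpha_0})^{-1}\simeq C\,(n^D L(n))^{-1}$, where the final comparison uses $L(t^a)\simeq L(t)$.

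Finally, for the pointwise pseudo-Poincar\'e inequality (fourth bullet), with $\rho(t)=t^{\alpha_0}$, I would expand $g$ along a near-geodesic for $\|\cdot\|_{\mathfrak K}$ as a product of powers $s_{i_j}^{m_j}$, telescope $f(xg)-f(x)$ along this product, and reduce to the one-dimensional cost of translating by a single $s_i^{m}$. Since $\mu(s_i^{j})\simeq \kappa_i\ell_i(j)\,j^{-1-\alpha_i}$ realizes a stable-like profile of index $\alpha_i$ in the $s_i$ direction, the sum $\sum_x|f(xs_i^{m})-f(x)|^2$ is controlled by $\mathcal E_\mu(f,f)$ with a factor comparable to $m^{\alpha_i}/\ell_i(m)\simeq F_i^{-1}(m)$, and the decomposition is chosen so that, by the very definition of $\|\cdot\|_{\mathfrak K}$ and the convexity of the $K_i$, these segment costs sum to a quantity comparable to $\|g\|^{\alpha_0}$. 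I expect the on-diagonal bound to be the main obstacle, since the slowly varying factors must be carried through the functional-inequality iteration without degrading the exponent $D$, and it is exactly the hypothesis $\ell_i(t^b)\simeq \ell_i(t)$, hence $L(t^a)\simeq L(t)$, that keeps the Nash profile self-consistent under the power rescalings the iteration requires.
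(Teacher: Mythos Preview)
The paper does not prove this theorem at all: it is stated with the preface ``The following theorem is proved in \cite{SCZ-nil}'' and is used as a black box input for Theorem \ref{th-eq3}. So there is nothing in the present paper to compare your proposal against.

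That said, your outline is consistent with how the paper signals the result is obtained in \cite{SCZ-nil}: the volume estimate is attributed there to \cite[Theorem 5.15]{SCZ-nil}, the single-generator bound is the regular-variation analogue of \cite[Proposition 2.17]{SCZ-nil} (as used in Lemma \ref{lem-stab}), the on-diagonal bound follows from the Nash-type/functional-inequality machinery referenced via \cite[Theorem 4.1]{SCZ-nil}, and the pseudo-Poincar\'e inequality is the analogue of \cite[Lemma 4.4]{SCZ-nil}. Your sketch matches these pointers, so as a reconstruction of the cited argument it is on the right track; just be aware that in the context of \emph{this} paper no proof is expected or given.
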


Here, we prove the following result.

\begin{theo}
\label{th-eq3} Let $G$ be a finitely generated nilpotent group equipped with
a generating $k$-tuple $(s_1,\dots,s_k)$. Assume that $\mu$ is a probability
measure on $G$ of the form \emph{(\ref{slow-li})}. Let $\ell_i$, $F_i$, $L$, 
$D=D_{S,a}$, $\alpha_0\in (0,2)$ and $\|\cdot\|$ be as described above. Then 
$\mu$ is strongly $(\|\cdot\|, t\mapsto t^{1/\alpha_0})$-controlled.
\end{theo}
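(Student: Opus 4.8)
The plan is to deduce Theorem \ref{th-eq3} from Proposition \ref{pro-SC} by checking its hypotheses with the scaling function $r(t)=t^{1/\alpha_0}$. Four of these are read off directly from the results recalled above: since $V(\rho)\simeq \rho^{\alpha_0 D}L(\rho)$ with $L$ slowly varying and $L(t^a)\simeq L(t)$, we have $V(2\rho)/V(\rho)\simeq 2^{\alpha_0 D}$, so $V$ is doubling; the norm $\|\cdot\|$ has already been observed to be well-connected; $r(t)=t^{1/\alpha_0}$ is continuous, increasing and doubling; the measure $\mu$ of (\ref{slow-li}) satisfies $\mu(e)\ge\tfrac1k\sum_i\kappa_i\ell_i(0)>0$ because the powers $s_i^{0}=e$ contribute; and the pointwise $(\|\cdot\|,t\mapsto t^{1/\alpha_0})$-pseudo-Poincar\'e inequality is exactly the last assertion of the theorem from \cite{SCZ-nil} quoted above. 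Hence the only remaining point is to show that $\mu$ is $(\|\cdot\|,r)$-controlled in the sense of Definition \ref{controlled}.

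To obtain control I would follow the route used for Theorem \ref{th-muSa}, applying Proposition \ref{tightness} and Corollary \ref{cor-tight} and then passing to discrete time through Remark \ref{rem-disc}. The on-diagonal bound (\ref{odub}) holds with $m(t)=C\min\{1,(t^{D}L(t))^{-1}\}$, which is regularly varying of index $-D<0$, and $V(r(t))\simeq t^{D}L(t)\simeq m(t)^{-1}$, so the two-sided estimate of Definition \ref{controlled}(1) is in force as soon as the tightness of Definition \ref{controlled}(2) is established. The input still needed by Proposition \ref{tightness} is control of the truncation quantities
\[
\delta_R=\sum_{\|x\|>R}\mu(x),\qquad \mathcal{G}(R)=\sum_{\|x\|\le R}\|x\|^2\mu(x),
\]
and supplying these is the heart of the matter.

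The key step, and the main obstacle, is thus to establish the analogue of Lemma \ref{lem-stab} in the present regularly varying setting, namely
\[
\delta_R\simeq R^{-\alpha_0},\qquad \mathcal{G}(R)\simeq R^{2-\alpha_0}.
\]
Because $\mu$ is carried by the powers of the generators, both quantities split into a finite sum over $i$ of one-dimensional sums of the weights $\kappa_i\ell_i(|n|)(1+|n|)^{-1-\alpha_i}$. Here I would use the estimate $\|s_i^{n}\|^{\alpha_0}\le C\,\widetilde F_i^{-1}(n)$ from \cite{SCZ-nil}, where each $\widetilde F_i$ is regularly varying, $\widetilde F_i\ge F_i$, with two-sided equivalence for at least one (``dominant'') index. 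The condition $\|s_i^n\|>R$ then reads $n\gtrsim \widetilde F_i(cR^{\alpha_0})$, and Karamata's theorem gives $\sum_{n>N}\ell_i(n)n^{-1-\alpha_i}\simeq \ell_i(N)N^{-\alpha_i}$; evaluating at $N=\widetilde F_i(cR^{\alpha_0})$ and using $F_i(\rho)\simeq[\rho\,\ell_i(\rho)]^{1/\alpha_i}$ together with the scaling invariances $\ell_i(t^b)\simeq\ell_i(t)$, every term is $\lesssim R^{-\alpha_0}$, with equality for the dominant index, whence $\delta_R\simeq R^{-\alpha_0}$. For $\mathcal{G}(R)$ I would decompose into dyadic shells $\|x\|\simeq 2^{-j}R$; each shell carries $\mu$-mass $\simeq (2^{-j}R)^{-\alpha_0}$ by the regular variation of $\delta$, so its contribution is $\simeq (2^{-j}R)^{2-\alpha_0}$, and since $2-\alpha_0>0$ the geometric sum over $j\ge0$ is dominated by the top shell, giving $\mathcal{G}(R)\simeq R^{2-\alpha_0}$. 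The delicate points are the bookkeeping of the slowly varying factors (this is exactly where $L(t^a)\simeq L(t)$ and $\ell_i(t^b)\simeq\ell_i(t)$ enter) and the check that the non-dominant indices, for which $\widetilde F_i>F_i$ keeps $\|s_i^n\|$ smaller, cannot alter the order of magnitude, just as in the closing remark of the proof of Lemma \ref{lem-stab}.

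With these two estimates the hypotheses of Proposition \ref{tightness} are met by taking, for each prescribed $\varepsilon>0$, $R(t)=r(t)=C_\varepsilon t^{1/\alpha_0}$ with $C_\varepsilon$ large: then $t\delta_{R(t)}\simeq C_\varepsilon^{-\alpha_0}$ and $t/(R(t)^2/\mathcal{G}(R(t)))\simeq C_\varepsilon^{-\alpha_0}$ are both small, while $r(t)/R(t)=1$ keeps $m(t)V(r(t))e^{-r(t)/6R(t)}\simeq C_\varepsilon^{\alpha_0 D}e^{-1/6}$ bounded. Proposition \ref{tightness} and Corollary \ref{cor-tight} then give the continuous-time tightness, Remark \ref{rem-disc} transfers it to the discrete walk, and absorbing $C_\varepsilon$ into the free constant $\gamma$ (legitimate because volume doubling makes control insensitive to the multiplicative constant in $r$) shows that $\mu$ is $(\|\cdot\|,t\mapsto t^{1/\alpha_0})$-controlled. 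Finally Proposition \ref{pro-SC} upgrades control to strong control, which is the conclusion of Theorem \ref{th-eq3}.
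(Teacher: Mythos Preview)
Your proposal is correct and follows essentially the same route as the paper: the whole argument reduces to the estimates $\delta_R\simeq R^{-\alpha_0}$ and $\mathcal G(R)\simeq R^{2-\alpha_0}$, after which Proposition~\ref{tightness} (passed to discrete time via Remark~\ref{rem-disc}) gives $(\|\cdot\|,t\mapsto t^{1/\alpha_0})$-control and Proposition~\ref{pro-SC} upgrades this to strong control. The only minor difference is in the computation of $\mathcal G(R)$: the paper sums directly over each generator to obtain $\mathcal G(R)\simeq\sum_i R^2/\big(F_i^{-1}\!\circ\widetilde F_i(R^{\alpha_0})\big)\simeq R^{2-\alpha_0}$, whereas you deduce it from the already-established behavior of $\delta_R$ via a dyadic shell decomposition; both arguments are valid and equally short.
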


\begin{proof} It suffices to estimate the quantities $\delta_R$ and $\mathcal G(R)$
in the present context.
For $\delta_R$, we
have
$$\delta_R \simeq \sum _1^k \sum_{n\ge \widetilde{F}_i(R^{\alpha_0})}\frac{1}{nF^{-1}_i(n)}
\simeq  \sum_1^k \frac{1}{F_i^{-1}\circ \widetilde{F}_i(R^{\alpha_0})}
\simeq R^{-\alpha_0}. $$  
A similar computation gives
$$\mathcal G(R)\simeq \sum_1^k 
\frac{ R^2}{F_i^{-1}\circ \widetilde{F}_i(R^{\alpha_0})}\simeq R^{2-\alpha_0}.$$
\end{proof}

\subsection{The critical case when $\protect\alpha_i=2$, $1\le i\le k$}

When $a=\mathbf{2}=(2,\dots, 2)$, that is, $\alpha_i=2$ for all $1\le i\le k$%
, we work with the usual word-length function $|g|$ associated with the
generating set $\mathcal{S}=\{s_1^{\pm 1},\dots, s_k^{\pm 1}\}$. In this
case, $V(r)=\#\{g:|g|\le r\}\simeq r^{D(G)}$ where $D(G)$ is the classical
degree of polynomial growth for the nilpotent group $G$. It is proved in 
\cite{SCZ-nil} that $\mu_{S,\mathbf{2}}^{(n)}(e)\le C (n\log n)^{-D/2}$ and
that $\mu_{S,\mathbf{2}}$ satisfies a pointwise $(|\cdot|,t\mapsto (t\log
t)^{1/2})$-pseudo-Poincar\'e inequality. Further, $|s_i^n|\simeq
|n|^{1/\beta_i}$ with $\beta_i\ge 1$ and $\beta_i=1$ for some $i$. From this
it easily follows that 
\begin{equation*}
\delta_R\simeq R^{-2},\;\;\mathcal{G}(R)\simeq \log R .
\end{equation*}
Applying Proposition \ref{pro-SC} with $r(t)=(t\log t)^{1/2}$ yields the
following theorem.

\begin{theo}
\label{th-eq4} Let $G$ be a finitely generated nilpotent group equipped with
a generating $k$-tuple $(s_1,\dots,s_k)$. Let $D(G)$ be the volume growth
degree of $G$. Then $\mu_{S,\mathbf{2}}$ is strongly $(|\cdot|,t\mapsto
(t\log t)^{1/2})$-controlled.
\end{theo}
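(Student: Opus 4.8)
The plan is to verify that $\mu_{S,\mathbf 2}$ satisfies all the hypotheses of Proposition \ref{pro-SC} for the norm $\|\cdot\|=|\cdot|$ and the scale function $r(t)=(t\log t)^{1/2}$, so that strong control follows at once. All the analytically heavy inputs — the volume estimate $V(r)\simeq r^{D(G)}$, the on-diagonal upper bound $\mu_{S,\mathbf 2}^{(n)}(e)\le C(n\log n)^{-D/2}$, the pointwise $(|\cdot|,(t\log t)^{1/2})$-pseudo-Poincar\'e inequality, and the one-parameter growth $|s_i^n|\simeq|n|^{1/\beta_i}$ with $\beta_i\ge1$ and $\beta_i=1$ for some $i$ — are borrowed from \cite{SCZ-nil}. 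The word-length $|\cdot|$ is trivially well-connected in the sense of Definition \ref{wellc} (take $b=1$), $V$ is doubling since $V(r)\simeq r^{D(G)}$, the function $r(t)=(t\log t)^{1/2}$ is continuous, increasing and doubling for large $t$, and $\mu_{S,\mathbf 2}(e)>0$ because the $m=0$ term charges the identity. Thus the only point left to establish is that $\mu_{S,\mathbf 2}$ is $(|\cdot|,r)$-controlled in the sense of Definition \ref{controlled}; the pseudo-Poincar\'e inequality then upgrades control to strong control via Proposition \ref{pro-SC}.

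To obtain control I would invoke Proposition \ref{tightness} together with Remark \ref{rem-disc}. First I would record the two geometric quantities from \eqref{def-dG}: using $|s_i^n|\simeq|n|^{1/\beta_i}$ and $\mu_{S,\mathbf 2}(s_i^m)\simeq(1+|m|)^{-3}$, direct summation gives $\delta_R\simeq\sum_i R^{-2\beta_i}\simeq R^{-2}$ (the minimal exponent $\beta_i=1$ dominating) and $\mathcal G(R)\simeq\sum_i\sum_{|n|\le R^{\beta_i}}|n|^{2/\beta_i-3}\simeq\log R$ (again the term $\beta_i=1$, with borderline exponent $-1$, dominating, while each $\beta_i>1$ contributes only $O(1)$). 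Taking $m(t)=C(t\log t)^{-D/2}$, which is regularly varying of index $-D/2$, I would choose $R(t)=A(t\log t)^{1/2}$ with $A$ a large constant. Then $2t\delta_{R(t)}\simeq 2/(A^2\log t)\to0$ and $t\,\mathcal G(R(t))/R(t)^2\simeq 1/(2A^2)$, so for $A$ large both requirements of Proposition \ref{tightness} hold for all large $t$ (the finitely many small $t$, where the logarithm misbehaves, being absorbed into constants). Since $m(t)V(R(t))\simeq A^{D}$ is bounded, the remaining hypothesis is met with $r=R$, and Proposition \ref{tightness} yields both the escape bound (property 2 of Definition \ref{controlled}) and the lower bound $p(t,e,e)\ge(1-\varepsilon)/V(\gamma r(t))\simeq c\,(t\log t)^{-D/2}$; matched against the given upper bound this gives property 1. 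Remark \ref{rem-disc} transfers everything to the discrete walk.

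The one genuinely delicate point — and the reason the critical exponent is $\beta_*=2$ — is that here $\mathcal G(R)$ grows only logarithmically rather than like a power of $R$, so the natural truncation scale $R(t)$ is forced to be comparable to, rather than much smaller than, the diffusive scale $(t\log t)^{1/2}$. Consequently one cannot make the exponential factor $e^{-r(t)/6R(t)}$ in Proposition \ref{tightness} decay; one must instead rely on $m(t)V(r(t))$ staying bounded, and on the fact that the condition $t/(R^2/\mathcal G(R))<e^{-1}$ can only be arranged by taking $R(t)=A(t\log t)^{1/2}$ with $A$ strictly larger than $1$. I would therefore apply Proposition \ref{tightness} with $r=R=A(t\log t)^{1/2}$ and then observe that, since $V$ is doubling and the constant $\gamma$ in Definition \ref{controlled} may depend on $\varepsilon$, being $(|\cdot|,A(t\log t)^{1/2})$-controlled is the same as being $(|\cdot|,(t\log t)^{1/2})$-controlled. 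Feeding this into Proposition \ref{pro-SC} finishes the proof. The logarithmic correction in $r(t)=(t\log t)^{1/2}$ is precisely what compensates for $\mathcal G(R)\simeq\log R$, which is why this critical regime does not arise in the pure power-law analysis of Lemma \ref{lem-stab}.
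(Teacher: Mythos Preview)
Your proposal is correct and follows essentially the same route as the paper: borrow the on-diagonal upper bound, the pseudo-Poincar\'e inequality, and the asymptotics $|s_i^n|\simeq|n|^{1/\beta_i}$ from \cite{SCZ-nil}, compute $\delta_R\simeq R^{-2}$ and $\mathcal G(R)\simeq\log R$, feed these into Proposition~\ref{tightness} with $R(t)\simeq r(t)=(t\log t)^{1/2}$ to get $(|\cdot|,r)$-control, and then invoke Proposition~\ref{pro-SC}. Your discussion of why one is forced to take $r\simeq R$ (so that the exponential factor in Proposition~\ref{tightness} is merely bounded, with boundedness of $m(t)V(r(t))$ doing the work) is a correct and useful elaboration of a point the paper leaves implicit.
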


\section{Norm-radial measures} \label{sec-rad}

\setcounter{equation}{0}

In this section we assume that $G$ is a finitely generated group with
polynomial volume growth of degree $D(G)$ and we consider norm-radial
symmetric probability measures.

\subsection{Radial measures with stable-like exponent $\alpha\in (0,2)$}
This subsection treats probability measures of the form 
\begin{equation}  \label{Jrad}
\nu_\alpha (x) \simeq \frac{1}{(1+\left\Vert x\right\Vert) ^{\alpha }
V(\|x\| )}, \;\; J(x,y)\simeq \nu_{\alpha}(x^{-1}y),
\end{equation}
where $\alpha \in (0,2]$, $\left\Vert \cdot \right\Vert $ is a norm on $G$
and $V(r)=\#\{g:\|g\|\le r\}$.  
The case when $\alpha\in (0,2)$ and 
$V(r)\simeq r^d$ for some $d$ is treated
in \cite{BBK,BGK,MSC} where global matching upper and lower bounds are obtained.
We note that \cite{BBK,BGK,MSC} are set in  more general contexts where the group
structure play no role.   
We start with the following easy observation.
\begin{lem}
\label{DGrad} Referring the situation described above, assume that $V$
satisfies $V(2r)\le C_{DV} V(r)$ for all $r>0$. Then $\delta_R \simeq
R^{-\alpha}$ and 
\begin{equation*}
\mathcal{G}(R)\simeq \left\{%
\begin{array}{ll}
R^{2-\alpha} & \mbox{ if } \alpha\in (0,2) \\ 
\log R & \mbox{ if } \alpha=2.%
\end{array}%
\right.
\end{equation*}
\end{lem}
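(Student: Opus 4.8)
The plan is to reduce both $\delta_R$ and $\mathcal G(R)$ to sums of $\nu_\alpha$ over dyadic norm-annuli and then to recognize the resulting series. Throughout I use that, by the doubling hypothesis $V(2r)\le C_{DV}V(r)$, the annulus $A_j(s)=\{x:2^{j}s<\|x\|\le 2^{j+1}s\}$ has cardinality $\#A_j(s)\le V(2^{j+1}s)\simeq V(2^{j}s)$, while on $A_j(s)$ (with $2^{j}s\ge 1$) one has $\nu_\alpha(x)\simeq (2^{j}s)^{-\alpha}V(2^{j}s)^{-1}$. Hence the total $\nu_\alpha$-mass of $A_j(s)$ is comparable to $(2^{j}s)^{-\alpha}$, a quantity in which the volume has cancelled out; this cancellation is what makes the lemma ``easy.''

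For $\delta_R=\sum_{\|x\|>R}\nu_\alpha(x)$ I would write $\{\|x\|>R\}=\bigcup_{j\ge 0}A_j(R)$ and sum the annular masses,
\[
\delta_R\simeq \sum_{j\ge 0}(2^{j}R)^{-\alpha}=R^{-\alpha}\sum_{j\ge 0}2^{-j\alpha}\simeq R^{-\alpha},
\]
the geometric series converging precisely because $\alpha>0$. The upper bound uses only doubling; for the matching lower bound it suffices to retain the single innermost annulus $A_0(R)$.

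For $\mathcal G(R)=\sum_{\|x\|\le R}\|x\|^{2}\nu_\alpha(x)$ I would decompose $\{1\le\|x\|\le R\}$ into annuli $A_{-j}(R)=\{2^{-j-1}R<\|x\|\le 2^{-j}R\}$, $0\le j\lesssim \log_2 R$, on which the per-point value is $\|x\|^{2}\nu_\alpha(x)\simeq (2^{-j}R)^{2-\alpha}V(2^{-j}R)^{-1}$, so that the annular contribution is $\simeq (2^{-j}R)^{2-\alpha}$. When $\alpha\in(0,2)$ the exponent $2-\alpha$ is positive, the series $\sum_j 2^{-j(2-\alpha)}$ converges, the outer scale $j=0$ dominates, and $\mathcal G(R)\simeq R^{2-\alpha}$. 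When $\alpha=2$ each annulus contributes a bounded amount and there are $\simeq\log_2 R$ of them, so $\mathcal G(R)\simeq\log R$; equivalently $\mathcal G(R)\simeq\sum_{1\le\|x\|\le R}V(\|x\|)^{-1}\simeq\int_1^R dV(r)/V(r)=\log\bigl(V(R)/V(1)\bigr)\simeq\log R$.

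The one point that goes beyond bookkeeping is the lower bound, which needs the norm-annuli to be genuinely fat, i.e.\ a reverse volume-doubling estimate $V(2r)\ge (1+c)V(r)$ (equivalently $V(r)\gtrsim r^{c}$); this is available because $G$ has honest polynomial volume growth. For $\delta_R$, and for $\mathcal G$ when $\alpha<2$, a single dominant scale carries the estimate, so reverse doubling is only needed at that scale. The critical case $\alpha=2$ is the most delicate, and is where I expect the real work to sit: there the logarithm is produced by aggregating comparable contributions across all $\simeq\log R$ scales, so one genuinely relies on the two-sided polynomial control $\log V(R)\simeq\log R$ rather than on a single annulus.
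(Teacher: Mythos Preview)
Your dyadic-annulus computation is correct and is exactly the routine verification the paper omits: its own proof reads, in full, ``This follows by inspection.'' Your observation that the lower bounds (and the identification $\log V(R)\simeq\log R$ when $\alpha=2$) need reverse doubling is well taken; this is tacitly supplied by the section's standing polynomial-growth assumption on $G$ and by the regular-variation hypothesis on $V$ added in the results immediately following the lemma.
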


\begin{proof} This follows by inspection.
\end{proof}

The next lemma follows by application of Proposition \ref{tightness}.
However, in this lemma, we make a significant 
hypothesis on $\nu_{\alpha}^{(n)}(e)$.
\begin{lem}
\label{lem-strange} Set $r_\alpha(t)= t^{1/\alpha}$ if $\alpha\in (0,2)$, $%
r_2(t)=(t\log t)^{1/2}$. Referring the situation described above, assume
that $V$ is regularly varying of positive index and 
\begin{equation}\label{hyp-alpha}
\nu_\alpha^{(n)}(e)\le C V(r_\alpha(n))^{-1}.
\end{equation}
Then $\nu_\alpha$ is $(\|\cdot\|,r_\alpha)$-controlled.
\end{lem}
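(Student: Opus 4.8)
The plan is to verify the hypotheses of Proposition \ref{tightness} for the measure $\nu_\alpha$ with the scaling function $r_\alpha$, since that proposition directly yields $(\|\cdot\|,r_\alpha)$-control once its assumptions are met. The volume doubling hypothesis $V(2\rho)\le C_{VD}V(\rho)$ follows from the assumption that $V$ is regularly varying of positive index. The on-diagonal upper bound \eqref{odub} is precisely what we assume in \eqref{hyp-alpha}, after translating $\nu_\alpha^{(2n)}(e)\le CV(r_\alpha(n))^{-1}$ into a statement about the continuous-time kernel $p_t(e)\le m(t)$ with $m(t)=CV(r_\alpha(t))^{-1}$; here $m$ is regularly varying of negative index because $V$ is regularly varying of positive index and $r_\alpha$ is a power (times a slowly varying factor when $\alpha=2$). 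Remark \ref{rem-disc} then guarantees the conclusion passes between continuous and discrete time.

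The core of the argument is to exhibit functions $R(t)$ and $r(t)=r_\alpha(t)$ satisfying the three conditions of Proposition \ref{tightness}. First I would use Lemma \ref{DGrad}, which gives $\delta_R\simeq R^{-\alpha}$ and $\mathcal G(R)\simeq R^{2-\alpha}$ (for $\alpha<2$) or $\mathcal G(R)\simeq \log R$ (for $\alpha=2$). The natural choice is to take $R(t)$ proportional to $r_\alpha(t)$, say $R(t)=\theta r_\alpha(t)$ for a small constant $\theta$ to be fixed, so that all three conditions become comparisons between powers (and logarithms) of $t$. The condition $2t\delta_{R(t)}<\varepsilon$ amounts to $t\cdot R(t)^{-\alpha}\simeq t\cdot t^{-1}=1$ for $\alpha<2$ (and $t\cdot(t\log t)^{-1}\to 0$ for $\alpha=2$), so it holds for $\theta$ chosen large enough inside $\delta$; I would need to check the constant works out in the right direction. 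The middle condition $t/(R^2/\mathcal G(R))<e^{-1}$ similarly reduces to $t\mathcal G(R)/R^2\simeq t\cdot R^{-\alpha}$, again of constant order, controllable by the choice of $\theta$. Finally, the condition $\sup_t\{m(t)V(r(t))e^{-r(t)/6R(t)}\}<\infty$ becomes $\sup_t\{e^{-r_\alpha(t)/(6\theta r_\alpha(t))}\}=\sup_t e^{-1/(6\theta)}<\infty$, since $m(t)V(r_\alpha(t))\simeq 1$ by our on-diagonal hypothesis; this is trivially finite.

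The main subtlety—and the step I expect to need the most care—is the interplay of the constants. The three conditions pull $\theta$ in competing directions: making $R(t)=\theta r_\alpha(t)$ large helps $2t\delta_{R(t)}<\varepsilon$ and the middle condition but does nothing harmful to the last condition (which holds for every $\theta$). So I would first fix $\theta$ large enough to satisfy the first two inequalities with room to spare, then observe the third holds automatically. The case $\alpha=2$ requires slightly more attention because the relevant quantities carry logarithmic factors: with $r_2(t)=(t\log t)^{1/2}$ and $\mathcal G(R)\simeq \log R\simeq \log t$, one checks $t\mathcal G(R)/R^2\simeq t\log t/(t\log t)=1$ and $t\delta_R\simeq t/(t\log t)\to 0$, so the logarithms cancel precisely as designed. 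Once all three conditions of Proposition \ref{tightness} are verified and $V(r_\alpha(t))\simeq m(t)$ holds by construction, that proposition delivers the tightness estimate and the matching return lower bound, which is exactly the assertion that $\nu_\alpha$ is $(\|\cdot\|,r_\alpha)$-controlled.
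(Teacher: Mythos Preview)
Your approach matches the paper's: the lemma is obtained by applying Proposition~\ref{tightness} with the estimates $\delta_R\simeq R^{-\alpha}$ and $\mathcal G(R)$ supplied by Lemma~\ref{DGrad}.

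One technical point you glossed over deserves attention. Proposition~\ref{tightness} requires $r(t)\ge R(t)$, but for $\alpha\in(0,2)$ the condition $2t\delta_{R(t)}<\varepsilon$ with $R(t)=\theta r_\alpha(t)$ reads $c\theta^{-\alpha}<\varepsilon$, forcing $\theta$ large (and $\theta\to\infty$ as $\varepsilon\to 0$); taking $r=r_\alpha$ literally then violates $r\ge R$. The easy fix is to apply the proposition with $r(t)=R(t)=\theta r_\alpha(t)$, obtain the conclusion $\mathbf P_e(\sup_{s\le t}\|X_s\|\ge \gamma\theta\, r_\alpha(t))<\varepsilon$, and absorb $\theta$ into $\gamma$. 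Volume doubling gives $V(\theta r_\alpha(t))\simeq V(r_\alpha(t))$, so the return-probability lower bound and the final equivalence $V(r(t))^{-1}\simeq m(t)$ transfer back to $r_\alpha$, yielding both parts of Definition~\ref{controlled}. (For $\alpha=2$ this tension does not arise, since $t\delta_{R(t)}\simeq (\theta^2\log t)^{-1}\to 0$ for any fixed $\theta$.)
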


The next theorem provides a basic class of  examples when the hypothesis 
(\ref{hyp-alpha}) regarding $\nu_\alpha$ can indeed be verified. 
Note that the result is restricted to the case $\alpha\in (0,2)$.

\begin{theo}
\label{th-Mat} Referring the situation described above, assume that $V$ is
regularly varying of positive index and $\alpha\in (0,2)$. Then 
\begin{equation*}
\nu_\alpha^{(n)}(e)\simeq V(n^{1/\alpha})^{-1}
\end{equation*}
and $\nu_\alpha$ is $(\|\cdot\|,t\mapsto t^{1/\alpha})$ controlled.
\end{theo}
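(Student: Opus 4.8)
The plan is to prove the upper bound $\nu_\alpha^{(n)}(e)\simeq V(n^{1/\alpha})^{-1}$ and then invoke Lemma \ref{lem-strange} to conclude control. The upper bound direction (\ref{hyp-alpha}) is the substantive content; the matching lower bound is automatic from Lemma \ref{lem-strange} (via the tightness argument of Proposition \ref{tightness}, which gives $p(t,e,e)\ge (1-\varepsilon)V(\gamma r(t))^{-1}$), so the real work is establishing $\nu_\alpha^{(n)}(e)\le CV(n^{1/\alpha})^{-1}$. The natural route is to feed the on-diagonal bound into the Davies machinery of Section \ref{sec-Dav}, but that machinery \emph{assumes} (\ref{odub}); here we must first produce it. Since $\alpha\in(0,2)$ and $V$ is regularly varying of positive index, the jumping kernel $\nu_\alpha(x)\simeq [(1+\|x\|)^\alpha V(\|x\|)]^{-1}$ is genuinely ``stable-like'', and the standard approach is a Nash-type or Faber--Krahn argument: bound the Dirichlet form $\mathcal E_{\nu_\alpha}(f,f)$ from below by a fractional-type Sobolev expression and then run the Nash iteration to get $p_t(e)\le m(t)$ with $m(t)\simeq V(t^{1/\alpha})^{-1}$.

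Concretely, first I would record the elementary volume and truncation estimates: by Lemma \ref{DGrad}, $\delta_R\simeq R^{-\alpha}$ and $\mathcal G(R)\simeq R^{2-\alpha}$, so $R^2/\mathcal G(R)\simeq R^\alpha$. Next I would establish the on-diagonal upper bound. The cleanest way is to exploit the group structure and the doubling of $V$ to prove a Nash inequality of the form
\begin{equation*}
\|f\|_2^{2+2\alpha/d}\le C\,\mathcal E_{\nu_\alpha}(f,f)\,\|f\|_1^{2\alpha/d}
\end{equation*}
adapted to regularly varying $V$ (replacing the pure power $d$ by the regular-variation index and carrying a slowly varying factor), and then apply the classical Nash argument to the continuous-time semigroup $p_t=e^{-t(I-\nu_\alpha*)}$ to obtain $p_t(e)\le m(t)$ with $m$ regularly varying of negative index and $V(m(t)^{-1/\cdots})\simeq t^{1/\alpha}$, i.e.\ $m(t)\simeq V(t^{1/\alpha})^{-1}$. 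The key lower bound on $\mathcal E_{\nu_\alpha}$ comes from comparing $\nu_\alpha$ with a truncated kernel and using $\int_{\|y\|\le R}\|y\|^2\nu_\alpha(y)\simeq R^{2-\alpha}$ together with the tail $\delta_R\simeq R^{-\alpha}$; these are exactly the quantities already computed.

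With (\ref{hyp-alpha}) in hand, the rest is immediate: Lemma \ref{lem-strange} asserts that under $V$ regularly varying of positive index and $\nu_\alpha^{(n)}(e)\le CV(r_\alpha(n))^{-1}$ with $r_\alpha(t)=t^{1/\alpha}$, the measure $\nu_\alpha$ is $(\|\cdot\|,r_\alpha)$-controlled. Control yields the matching lower bound $\nu_\alpha^{(n)}(e)\ge cV(n^{1/\alpha})^{-1}$ (the second property of Definition \ref{controlled} forces the walk to stay in a ball of radius $\gamma n^{1/\alpha}$ with probability bounded below, whence $\nu^{(2n)}(e)\ge V(\gamma n^{1/\alpha})^{-1}$, and doubling absorbs the $\gamma$), giving $\nu_\alpha^{(n)}(e)\simeq V(n^{1/\alpha})^{-1}$ and simultaneously the stated control.

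The main obstacle is the on-diagonal upper bound (\ref{hyp-alpha}) itself, since everything downstream is packaged in the lemmas already proved. The delicate point is that $V$ is only regularly varying, not a pure power, so the Nash/Faber--Krahn inequality carries slowly varying corrections and one must check that the resulting $m(t)$ is genuinely comparable to $V(t^{1/\alpha})^{-1}$ and is regularly varying of negative index (so that Proposition \ref{tightness} applies). This is where the restriction $\alpha\in(0,2)$ is essential: when $\alpha<2$ the second-moment quantity $\mathcal G(R)\simeq R^{2-\alpha}$ grows, which is what makes the stable-like scaling $r_\alpha(t)=t^{1/\alpha}$ the correct one and lets the functional inequality close; at $\alpha=2$ the logarithmic divergence of $\mathcal G(R)$ breaks this simple scaling, which is precisely why Theorem \ref{th-Mat} is stated only for $\alpha\in(0,2)$ and why the critical case is deferred to the more elaborate Theorem \ref{th-phi}.
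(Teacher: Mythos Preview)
Your overall architecture is right: the only substantive step is the on-diagonal upper bound $\nu_\alpha^{(n)}(e)\le CV(n^{1/\alpha})^{-1}$, after which Lemma~\ref{lem-strange} delivers both control and the matching lower bound. Your route to the upper bound, however, differs from the paper's. You propose a Nash/Faber--Krahn inequality with slowly varying corrections and then the standard Nash iteration. The paper instead uses an elementary decomposition: one checks that
\[
\nu_\alpha(x)\simeq \sum_{m\ge 0}\frac{1}{(1+m)^{1+\alpha}}\,\frac{\mathbf 1_{B(m)}(x)}{V(m)},
\]
so $\nu_\alpha$ is (up to constants) a mixture of uniform measures on balls $B(m)$, and then applies the subordination-type technique of \cite[Section~4.2]{BSClmrw} to read off $\nu_\alpha^{(n)}(e)\le CV(n^{1/\alpha})^{-1}$ directly from the known behavior of convolutions of uniform ball measures on doubling groups.

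Both approaches are valid; indeed the paper itself notes (in the remark following Proposition~\ref{up1}) that the Nash-type route of \cite{HK,BBK,BGK,MSC} also yields the upper bound automatically from the volume hypothesis. The trade-off is that the decomposition approach is shorter and sidesteps exactly the obstacle you flag---tracking the slowly varying factor through a Nash inequality and verifying that the resulting $m(t)$ is genuinely $\simeq V(t^{1/\alpha})^{-1}$---because it reduces everything to the classical finite-second-moment theory on each ball. Your Nash route is more self-contained (no appeal to \cite{BSClmrw}) and generalizes more readily to non-group settings, but requires more care with regular variation. Neither is wrong; the paper simply chose the lighter tool.
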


\begin{proof}It suffices to prove the upper bound 
$\nu_\alpha^{(n)}(e)\le CV(n^{1/\alpha})^{-1}$.  Start by checking that
$$\nu_{\alpha}(x)\simeq \sum_0^\infty \frac{1}{(1+m)^{1+\alpha}}
\frac{\mathbf1 _{B(m)}(x)}{V(m)}.
$$ 
where $B(m)=\{x\in G: \|x\|\le m\}$. Then apply the elementary technique 
of \cite[Section 4.2]{BSClmrw} to derive the desired upper 
bound on $\nu_\alpha^{(n)}(e)$.
\end{proof} 

\begin{rem}
In the context of Theorem \ref{th-Mat}, we do not know if $\|\cdot\|$ is
well-connected and we also do not know if $\nu_\alpha$ satisfies a \emph{%
pointwise} $(\|\cdot\|,r_\alpha)$-pseudo-Poincar\'e inequality. Hence, the
techniques used in this paper do not suffice to obtain strong control.
However, if $\|\cdot\|$ is well-connected and $\nu_\alpha$ satisfies a
pointwise $(\|\cdot\|,r_\alpha)$-pseudo-Poincar\'e inequality then the
strong $(\|\cdot\|,r_\alpha)$-control follows by Proposition \ref{pro-SC}.
This proves the second statement in Theorem \ref{th-muSa}.
\end{rem}

\subsection{Word-length radial measures}

As noticed above, the study of $\nu_\alpha$ in the  case $\alpha=2$ 
is significantly more difficult than in the case $\alpha\in (0,2)$.
In fact, we do not
know how to treat this case in the generality described in the previous 
subsection. The following
theorem treats the case when $\|\cdot\|$ is the usual word-length function $%
\|\cdot\|=|\cdot|$ on $G$.

\begin{theo}\label{th-alpha2}
Assume that $G$ is a group of polynomial volume growth equipped with
generating $k$-tuple $S=(s_1,\dots,s_k)$ 
and the associated word-length $|\cdot|$ and volume function $V$. Let $D(G)$ be
the degree of polynomial volume growth of $G$. Let $\nu_2$ be a symmetric 
probability measure such that 
$$\nu_2(g)\simeq ((1+|g|)^2V(|g|))^{-1}.$$
Then the have
\begin{equation*}
\nu_2^{(n)}(e)\simeq (n\log n )^{-D(G)/2}.
\end{equation*}
Further, $\nu_2$ is strongly $(|\cdot|,t\mapsto (t\log t)^{1/2})$-controlled.
\end{theo}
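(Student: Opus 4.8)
The plan is to isolate the two analytic inputs that the general machinery of Sections~\ref{sec-Dav} and~\ref{sec-psc} requires for $\nu_2$, namely the on-diagonal upper bound $\nu_2^{(2n)}(e)\le C(n\log n)^{-D(G)/2}$ and a pointwise $(|\cdot|,t\mapsto (t\log t)^{1/2})$-pseudo-Poincar\'e inequality, and then to run the scheme. Since $G$ has polynomial volume growth it contains, by Gromov's theorem, a nilpotent subgroup of finite index; by the standard comparison arguments for finite-index subgroups (which preserve the growth degree $D(G)$, the word metric up to bi-Lipschitz equivalence, and the estimates we seek) I would reduce to the case where $G$ is nilpotent with generating tuple $S=(s_1,\dots,s_k)$, so that the results of \cite{SCZ-nil} on the measure $\mu_{S,\mathbf 2}$ recalled in Theorem~\ref{th-eq4} and its preamble become available.

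The key step is the Dirichlet form comparison $\mathcal E_{\mu_{S,\mathbf 2}}(f,f)\le A\,\mathcal E_{\nu_2}(f,f)$ for all finitely supported $f$. Using the mixture representation $\nu_2(x)\simeq \sum_{m\ge 1}m^{-3}V(m)^{-1}\mathbf 1_{B(m)}(x)$ (as in the proof of Theorem~\ref{th-Mat}), where $B(m)=\{|g|\le m\}$, one gets $\mathcal E_{\nu_2}(f,f)\gtrsim \sum_{m\ge 1}m^{-3}\mathcal E_{U_m}(f,f)$ with $U_m$ the normalized uniform measure on $B(m)$. Since $\mu_{S,\mathbf 2}$ is carried by the powers $s_i^n$, with $|s_i^n|\simeq |n|^{1/\beta_i}$ and weight $\simeq |n|^{-3}$, the comparison reduces, generator by generator, to dominating the energy $\sum_n |n|^{-3}\|f(\cdot\, s_i^n)-f\|_2^2$ of a critical one-dimensional jump sequence by $\sum_m m^{-3}\mathcal E_{U_m}(f,f)$. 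A naive single-scale bound diverges exactly at this critical index (reflecting $\sum_g|g|^2\nu_2(g)=\infty$), so an efficient multiscale/telescoping estimate distributing each long jump $s_i^n$ across all dyadic scales up to $|s_i^n|$ is required; this is precisely where the fine analysis of the norms $\|\cdot\|_{S,a}$ and the associated weight systems in \cite{SCZ-nil} enters. I expect this comparison to be the main obstacle.

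Granting the comparison, the rest is routine. First, $\mu_{S,\mathbf 2}$ satisfies the pointwise $(|\cdot|,t\mapsto (t\log t)^{1/2})$-pseudo-Poincar\'e inequality of \cite{SCZ-nil}, so with $\rho$ the inverse of $t\mapsto (t\log t)^{1/2}$ (thus $\rho(s)\simeq s^2/\log s$) the comparison transfers it to $\nu_2$:
\begin{equation*}
\sum_x|f(xg)-f(x)|^2\le C\rho(|g|)\,\mathcal E_{\mu_{S,\mathbf 2}}(f,f)\le CA\,\rho(|g|)\,\mathcal E_{\nu_2}(f,f).
\end{equation*}
Combined with the polynomial growth $V(r)\simeq r^{D(G)}$, this pseudo-Poincar\'e inequality yields, by the classical implication (pseudo-Poincar\'e inequality plus volume growth $\Rightarrow$ Nash inequality $\Rightarrow$ on-diagonal decay), the upper bound $\nu_2^{(2n)}(e)\le C\,V((n\log n)^{1/2})^{-1}\simeq (n\log n)^{-D(G)/2}$; alternatively one transfers the on-diagonal bound of $\mu_{S,\mathbf 2}$ directly through the comparison theorem for return probabilities.

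Finally I run the control machinery with $r(t)=(t\log t)^{1/2}$ and $m(t)\simeq (t\log t)^{-D(G)/2}\simeq V(r(t))^{-1}$. By Lemma~\ref{DGrad} with $\alpha=2$ one has $\delta_R\simeq R^{-2}$ and $\mathcal G(R)\simeq \log R$, so taking $R(t)\simeq (t\log t)^{1/2}$ meets all the hypotheses of Proposition~\ref{tightness} ($2t\delta_{R(t)}<\varepsilon$, $t/(R(t)^2/\mathcal G(R(t)))<e^{-1}$, $R(t)\le r(t)$, and $\sup_t m(t)V(r(t))e^{-r(t)/6R(t)}<\infty$). Proposition~\ref{tightness} (together with Remark~\ref{rem-disc} for the passage to discrete time) then shows that $\nu_2$ is $(|\cdot|,r)$-controlled and supplies the matching lower bound, giving $\nu_2^{(2n)}(e)\simeq (n\log n)^{-D(G)/2}$, whence the stated estimate for $\nu_2^{(n)}(e)$ since $\nu_2(e)>0$. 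Since $|\cdot|$ is well-connected, $V$ is doubling, $\nu_2(e)>0$, the map $r$ is doubling, and $\nu_2$ satisfies the pseudo-Poincar\'e inequality above, Proposition~\ref{pro-SC} upgrades this to strong $(|\cdot|,t\mapsto (t\log t)^{1/2})$-control, completing the proof.
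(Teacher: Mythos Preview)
Your proposal is correct and follows essentially the same route as the paper: reduce to the nilpotent case via Gromov's theorem and the comparison results of \cite{PSCstab}, import from \cite{SCZ-nil} the Dirichlet form comparison $\mathcal E_{\mu_{S,\mathbf 2}}\le C\mathcal E_{\nu_2}$ (which you rightly flag as the main obstacle and which the paper simply cites as \cite[Theorem 5.7]{SCZ-nil}) to obtain both the on-diagonal upper bound and the pointwise $(|\cdot|,t\mapsto(t\log t)^{1/2})$-pseudo-Poincar\'e inequality for $\nu_2$, and then apply Lemma~\ref{lem-strange}/Proposition~\ref{tightness} followed by Proposition~\ref{pro-SC}. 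The only cosmetic difference is that the paper first proves the nilpotent case and then transfers to general $G$, whereas you reduce at the outset; the content is identical.
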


\begin{proof} We apply Lemma \ref{lem-strange} and Proposition \ref{pro-SC}.
When $G$ is nilpotent, 
the upper bound $\nu_2^{(n)}(e)\le (n\log n)^{-D(G)/2}$ 
follows from Theorems 4.8 and 5.7 of \cite{SCZ-nil}. 
Namely, \cite[Theorem 5.7]{SCZ-nil}
shows that 
$$\nu_2^{(n)}(e) \le C\mu_{S,\mathbf 2}^{(Kn)}(e)$$ and 
\cite[Theorem 4.8]{SCZ-nil} gives 
$\mu_{S,\mathbf 2}^{(n)}(e)\le C(n\log n)^{-D(G)/2}$. 
Further, \cite[Theorem 5.7]{SCZ-nil} shows that $\nu_2$ satisfies 
a pointwise $(|\cdot|,t\mapsto (t\log t)^{1/2})$-pseudo-Poincar\'e inequality.

Since any group of polynomial volume growth of degree $D(G)$ contains a
nilpotent subgroup of finite index 
(hence, with the same degree of polynomial volume growth)
the upper bound $\nu_2^{(n)}(e)\le C(n\log n )^{-D(G)/2}$
follows from the comparison theorem \cite[Theorem 2.3]{PSCstab}.
By direct inspection, the desired pseudo-Poincar\'e inequality also follows.
\end{proof}

Note that Theorem \ref{th-phi} includes the result stated in Theorem 
\ref{th-alpha2} as a special case and  provides a very satisfactory result covering the 
behaviors of word-length radial measures across the  second moment threshold.

\begin{proof}[Proof of Theorem \ref{th-phi}] 
The same technique of proof as for Theorem \ref{th-alpha2}
gives the much more complete and subtle result
stated in the introduction as Theorem \ref{th-phi}. Namely, let $\phi:[0,\infty)\ra [1,\infty)$ be
a continuous increasing regularly function of index $2$ and  
let $\nu_\phi$ be as in (\ref{nuphi}), that is, assume that 
$\nu_\phi$ is symmetric and satisfies $\nu(g)\simeq [\phi(|g|)V(|g|)]^{-1}$.  
First, assume that $G$ is nilpotent and 
let $\mu_{S,\phi}$ be the measure given by
$$\mu_{S,\phi}(g)=\frac{1}{k}\sum_1^k\frac{\kappa}{(1+|n|)\phi(n)}\mathbf 1_{s_i^n}(g).$$  Let $r$ be the inverse function of 
$t\mapsto t^2/\int_0^t\frac{sds}{\phi(s)}$.  By \cite[Lemma 4.4]{SCZ-nil},
the measure $\mu_{S,\phi}$ satisfies the pointwise 
$(|\cdot|,r)$-pseudo-Poincar\'e 
inequality. By \cite[Theorem 4.1]{SCZ-nil}, it follows that
$\mu_{S,\phi}^{(n)}(e)\le V(r(n))^{-1}$. 
By \cite[Theorem 5.7]{SCZ-nil}, we have the Dirichlet form comparison
$\mathcal E_{\mu_{S,\phi}}\le C\mathcal E_{\nu_\phi}$. 

Now, if $G$ has polynomial volume growth then it contains  
a  nilpotent subgroup with finite index, $G_0$. By inspection, quasi-isometry 
and comparison of Dirichlet forms (see \cite{PSCstab}),  
it is easy to transfer both the pointwise
 $(|\cdot|,r)$-pseudo-Poincar\'e inequality
and the decay $\nu^{(n)}_\phi(e)\le V(r(n))^{-1}$ 
from $G_0$ to  $G$.    Further, one checks that the functions $\delta_R$
and $\mathcal G(R)$ satisfy  $\delta_R \simeq 1/\phi(R)$ and 
$\mathcal G(R)\simeq  \int_0^R\frac{tdt}{\phi(t)}$.  Proposition
\ref{tightness} with  $r(t)=R(t)$ equals to the inverse function of 
$s\mapsto s^2/ \int_0^s\frac{tdt}{\phi(t)}$ shows that $\nu_\phi$ is
$(|\cdot|,r)$-controlled.  By Proposition \ref{pro-SC}, 
$\nu_\phi$ is strongly
$(|\cdot|,r)$-controlled.
\end{proof}
 
\subsection{Assorted further applications}
The approach presented here is applicable even in cases were we are not able
to obtain sharp results and we illustrate this by an example. Le $G$ be a
nilpotent group equipped with a generating $k$-tuple $S=(s_1,\dots,s_k)$.
Fix $a\in (0,2]^k$ and set $\alpha_*=\max\{\alpha_i, 1\le i\le k\}\le 2$.
Consider the norm $\|\cdot\|_{S,a}$ defined at (\ref{normSa}). Let $\nu_*$
be any symmetric probability measure such that 
\begin{equation*}
\nu_*(g)\simeq \frac{1}{(1+\|g\|_{S,a})^2V(\|g\|_{S,a})},\;\;
V(r)=\#\{g:\|g\|_{S,a}\le r\}.
\end{equation*}
Theorems 3.2, 4.8 and 5.7 of \cite{SCZ-nil} gives the following information.
There exists two reals $D=D_{S,a}$ and $d=d_{S,a}$ and a constant $C_1\in
(0,\infty)$ such that 
\begin{eqnarray}
\nu_*^{(n)}(e) &\le & C_1 n^{-\alpha_* D /2} (\log n)^{-d}  \label{*1} \\
V(r)&\simeq & r^{\alpha_* D} .  \label{*2}
\end{eqnarray}

\begin{theo}
For the probability measure $\nu_*$ on a finitely generated nilpotent group
as described above, we have 
\begin{equation*}
c(\log \log n)^{-\alpha_* D} (n\log n )^{-\alpha_* D /2}\le
\nu_*^{(n)}(e)\le C n^{-\alpha_* D/2} (\log n)^{-d}.
\end{equation*}
\end{theo}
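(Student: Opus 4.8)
The upper bound here is exactly \eqref{*1}, so the plan is to establish only the matching lower bound $\nu_*^{(n)}(e)\ge c(\log\log n)^{-\alpha_* D}(n\log n)^{-\alpha_* D/2}$, and the tool will be the tightness mechanism of Proposition \ref{tightness} applied to the continuous-time process with jumping kernel $J(x,y)=\nu_*(x^{-1}y)$ relative to the norm $\|\cdot\|=\|\cdot\|_{S,a}$. The on-diagonal bound \eqref{*1} reads $p(t,e,e)\le m(t)$ with $m(t)=Ct^{-\alpha_* D/2}(\log t)^{-d}$, which is regularly varying of negative index $-\alpha_* D/2$, while \eqref{*2} shows that $V(r)\simeq r^{\alpha_* D}$ is doubling; thus the structural hypotheses of Proposition \ref{tightness} are all in place.

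First I would compute the two auxiliary quantities $\delta_R$ and $\mathcal G(R)$ attached to $\nu_*$. Decomposing into dyadic annuli $\{2^j<\|x\|\le 2^{j+1}\}$ and using $\nu_*(x)\simeq (1+\|x\|)^{-2}V(\|x\|)^{-1}$ together with $V(r)\simeq r^{\alpha_* D}$, the mass of each annulus is $\simeq 2^{-2j}$, so that $\delta_R\simeq R^{-2}$ and $\mathcal G(R)\simeq \log R$. This places us squarely in the critical regime already encountered in Theorems \ref{th-eq4} and \ref{th-alpha2}, where the natural scale is of order $\sqrt{t\log t}$.

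The calibration step is the heart of the argument. I would set $R(t)=\sqrt{A\,t\log t}$ with $A$ a large constant; then $2t\delta_{R(t)}\simeq 2/(A\log t)$ and $t\,\mathcal G(R(t))/R(t)^2\simeq 1/(2A)$, so both requirements $2t\delta_{R(t)}<\varepsilon$ and $t/(R(t)^2/\mathcal G(R(t)))<e^{-1}$ hold once $A$ is large. The delicate point is the choice of the genuine scale $r(t)$: I would take $r(t)=C_0(\log\log t)\,R(t)$, so that $e^{-r(t)/6R(t)}\simeq(\log t)^{-C_0/6}$ and $V(r(t))\simeq(\log\log t)^{\alpha_* D}(t\log t)^{\alpha_* D/2}$, whence
\begin{equation*}
m(t)V(r(t))e^{-r(t)/6R(t)}\simeq (\log\log t)^{\alpha_* D}(\log t)^{\alpha_* D/2-d-C_0/6}.
\end{equation*}
Choosing $C_0>\max\{0,\,3\alpha_* D-6d\}$ makes the exponent of $\log t$ strictly negative and forces this supremum to be finite. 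Proposition \ref{tightness} then yields $p(t,e,e)\ge (1-\varepsilon)V(\gamma r(t))^{-1}\simeq (\log\log t)^{-\alpha_* D}(t\log t)^{-\alpha_* D/2}$, and passing to the discrete walk through Remark \ref{rem-disc} gives the asserted lower bound for $\nu_*^{(n)}(e)$.

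I expect the main obstacle to be precisely this calibration of the extra $\log\log$ factor. It is forced upon us because the on-diagonal upper bound carries a factor $(\log t)^{-d}$ whereas the volume at the natural scale $\sqrt{t\log t}$ grows like $(t\log t)^{\alpha_* D/2}$, producing a surplus power $(\log t)^{\alpha_* D/2-d}$ that the Gaussian-type factor $e^{-r(t)/6R(t)}$ in Proposition \ref{tightness} can only absorb at the cost of inflating $r(t)$ by a slowly growing factor. This mismatch is exactly why the resulting two-sided bound is not sharp: the lower bound is off from $(n\log n)^{-\alpha_* D/2}$ by the unavoidable $(\log\log n)^{\alpha_* D}$, and no sharper conclusion seems available with these techniques.
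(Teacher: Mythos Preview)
Your proposal is correct and follows essentially the same route as the paper: compute $\delta_R\simeq R^{-2}$ and $\mathcal G(R)\simeq\log R$ (the paper cites Lemma \ref{DGrad} rather than doing the dyadic decomposition by hand), take $R(t)\simeq(t\log t)^{1/2}$ and $r(t)=A'R(t)\log\log t$, and choose the constant so that the residual power of $\log t$ in $m(t)V(r(t))e^{-r(t)/6R(t)}$ is beaten by the $\log\log$ factor, then invoke Proposition \ref{tightness}. Your calibration and the paper's are the same up to renaming $C_0/6\leftrightarrow A$.
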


\begin{proof}
The volume estimate (\ref{*2}) and Lemma \ref{DGrad}  gives 
$\delta_R\simeq R^{-2}$ and $\mathcal G(R)\simeq \log R$. 
In order to apply Proposition \ref{tightness}, 
we set $R(t)\simeq (t\log t)^{1/2}$.  Further, we use (\ref{*1})--(\ref{*2})
to verify that the choice  $r(t)= 6A R(t) \log\log t$ with $A$ large enough 
satisfies the condition of Proposition \ref{tightness}. Indeed, we have 
$m(t)\simeq t^{-\alpha_* D/2}(\log t)^{-d}$, $V(r)\simeq r^{\alpha_* D}$ so that
$$m(t) V(r(t))e^{-r(t)/6R(t)}  \le C
(\log t)^{-d + \alpha_* D/2} (\log\log t)^{\alpha_* D}e^{-A \log \log t}.$$ 
Clearly, for $A$ large enough, the right-hand side is bounded above by 
a constant as required by Proposition \ref{tightness} which now gives the 
stated lower bound on $\nu_*^{(n)}(e)$. 
\end{proof}

\subsection{Complementary off-diagonal upper bounds}
In contrast with the case (\ref{muSa}) of measures supported on powers of
generators, for norm-radial kernels of type (\ref{Jrad}), we can use Meyer's
construction to derive good off-diagonal bounds for $p(t,e,x)$.

\begin{pro}
\label{up1} Let $G$ be a finitely generated group equipped with a norm $%
\|\cdot \|$. For $\alpha\in (0,2)$, let $\nu_\alpha$ be a symmetric
probability measure on $G$ satisfying \emph{(\ref{Jrad})}. Assume that there
exist a positive slowly varying function $\ell$ and a real $D>0$ such that:
\begin{enumerate}
\item $\forall\, r>1,\;\; V(r)\simeq r^D\ell(r)$;
\item $\forall\,t>0,\;x\in G,\;\;p(t,x,x)\le m(t)\simeq [(1+t)^{D/\alpha} \ell(t^{1/\alpha})]^{-1}$.
\end{enumerate}
Then there exists $C$ such that, for all $t>1$ and $x\in G$, we have
\begin{equation*}
p(t,e,x)\leq Cm(t)\min \left\{\left( \frac{t}{\| x\|^\alpha} \right) ^{1+D
/\alpha }\frac{\ell_1(t^{1/\alpha})}{\ell_1(\|x\|)},1\right\} .
\end{equation*}
\end{pro}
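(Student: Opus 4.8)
The plan is to combine the truncated off-diagonal estimate of Proposition \ref{Truncated} with Meyer's construction, exactly as in the control arguments of Section \ref{sec-Dav}, but now tracking the spatial decay quantitatively rather than merely using it to prove tightness. First I would record, via Lemma \ref{DGrad}, that for a kernel of type (\ref{Jrad}) with $\alpha\in(0,2)$ one has $\delta_R\simeq R^{-\alpha}$ and $\mathcal G(R)\simeq R^{2-\alpha}$, so that $R^2/\mathcal G(R)\simeq R^\alpha$. With this, Proposition \ref{Truncated} reads
\begin{equation*}
p_R(t,e,x)\le Ce^{4\delta_R t}\,m(t)\left(\frac{t}{R^\alpha}\right)^{\|x\|/3R}.
\end{equation*}
The natural choice is $R\simeq\|x\|$, which makes the exponent $\|x\|/3R$ a constant of order one and turns the factor $(t/R^\alpha)^{\|x\|/3R}$ into something comparable to $t/\|x\|^\alpha$; simultaneously $e^{4\delta_R t}\simeq e^{Ct/\|x\|^\alpha}$ is bounded as long as we are in the regime $t\lesssim\|x\|^\alpha$ (the only interesting regime, since off this range the trivial bound $p(t,e,x)\le m(t)$ together with $t/\|x\|^\alpha\gtrsim 1$ already gives the claim with the $\min$ achieving its second argument).

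Next I would pass from the truncated density $p_R$ to the full density $p$ using Meyer's construction. The decomposition underlying (\ref{Meyer}) expresses $p(t,e,x)$ as the truncated contribution plus the contribution of trajectories that make at least one jump of size exceeding $R$. The one-big-jump term is the delicate piece and is where the exponent $1+D/\alpha$ (rather than $D/\alpha$) and the slowly varying ratio $\ell_1(t^{1/\alpha})/\ell_1(\|x\|)$ are produced: one writes the single large jump as a convolution $\int p_R(s,e,z)\,J'_R(z,w)\,p(t-s,w,x)\,\cdots$, bounds $J(z,w)=\nu_\alpha(z^{-1}w)\simeq \|z^{-1}w\|^{-\alpha}V(\|z^{-1}w\|)^{-1}$ using hypothesis (1), and integrates. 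The product of the on-diagonal factor $m(t)$ with the tail $\|x\|^{-\alpha}V(\|x\|)^{-1}\simeq \|x\|^{-\alpha-D}\ell(\|x\|)^{-1}$ and a factor of $t$ from integrating in $s$ reassembles, after dividing and multiplying by $m(t)\simeq t^{-D/\alpha}\ell(t^{1/\alpha})^{-1}$, into $m(t)\,(t/\|x\|^\alpha)^{1+D/\alpha}\,\ell_1(t^{1/\alpha})/\ell_1(\|x\|)$, where $\ell_1$ is the slowly varying function attached to $V$ (here $\ell_1=\ell$, up to the usual Karamata bookkeeping). I would organize this as an induction or iteration on the number of large jumps, as in \cite{BGK,Mim}, checking that the geometric series in the number of big jumps converges because each big jump contributes a small factor of order $t\delta_R\lesssim t/\|x\|^\alpha\le 1$ in the relevant regime.

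The main obstacle I anticipate is the careful bookkeeping of the slowly varying factors and of the volume doubling when summing over dyadic annuli in the large-jump term, so that the final slowly varying ratio comes out as $\ell_1(t^{1/\alpha})/\ell_1(\|x\|)$ rather than some cruder expression; this is precisely the point where the regular variation of $V$ and the property $m(t)\simeq[(1+t)^{D/\alpha}\ell(t^{1/\alpha})]^{-1}$ must be used in tandem, invoking Potter-type bounds (see \cite[Chap.\ I]{BGT}) to control $\ell(\lambda r)/\ell(r)$ uniformly. A secondary point requiring care is the matching of constants in the exponent $\|x\|/3R$ when $R\simeq\|x\|$: one must choose the comparability constant in $R\simeq\|x\|$ so that $\|x\|/3R$ is bounded below by a fixed positive number, guaranteeing that $(t/R^\alpha)^{\|x\|/3R}\lesssim (t/\|x\|^\alpha)^{1+D/\alpha}$ with the correct power $1+D/\alpha$ emerging from the combination of the truncated exponent and the single-big-jump tail, rather than treating these two sources of decay separately.
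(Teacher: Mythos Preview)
Your approach would work, but it is considerably more elaborate than what the paper actually does, and you have slightly mislocated where the exponent $1+D/\alpha$ comes from. The paper uses only the crude pointwise Meyer bound
\[
p(t,e,x)\le p_R(t,e,x)+t\,\|\nu'_R\|_\infty,
\]
with no iteration on the number of large jumps and no convolution analysis. The point is that for a kernel of type (\ref{Jrad}) one has $\|\nu'_R\|_\infty\simeq R^{-\alpha}V(R)^{-1}\simeq R^{-\alpha-D}\ell_1(R)^{-1}$, so the big-jump term alone already equals
\[
t\,\|\nu'_R\|_\infty\simeq \frac{t}{R^{\alpha+D}\ell_1(R)}
= m(t)\left(\frac{t}{R^\alpha}\right)^{1+D/\alpha}\frac{\ell_1(t^{1/\alpha})}{\ell_1(R)}.
\]
Thus the exponent $1+D/\alpha$ and the slowly varying ratio are produced entirely by this single sup-norm term, not by any ``combination'' with the truncated exponent. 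The paper then chooses $R=R(x,t)$ to equate the truncated bound $m(t)(t/R^\alpha)^{\|x\|/3R}$ with $t\|\nu'_R\|_\infty$; for $t\le \eta R^\alpha$ with $\eta$ small this forces $R\simeq\|x\|$, and the result follows. Your convolution/iteration scheme (as in \cite{BGK,Mim}) would certainly succeed and is the right tool in settings without group invariance or without such a clean form for $\nu'_R$, but here it is unnecessary overhead: the Potter-bound bookkeeping and the dyadic summation you anticipate as the ``main obstacle'' simply do not arise.
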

\begin{rem}This proposition is stated here mostly for comparison with the 
next proposition. In fact, for the measure $\nu_\alpha$ with $\alpha\in (0,2)$,
the hypothesis (1) implies automatically that (2) is satisfied as well.
See \cite{BBK,BGK,HK,MSC}. See \cite{MSC} for a complete study of this case including two-sided discrete time estimates. 
\end{rem}

\begin{pro}
\label{up2} Let $G$ be a finitely generated group equipped with a norm $%
\|\cdot \|$ with volume $V$. Let $\nu_2$ be a symmetric probability measure
on $G$ satisfying \emph{(\ref{Jrad})} with $\alpha=2$. Assume that:
\begin{enumerate}
\item $\forall\, r>1,  V(r)\simeq r^D$,
\item $\forall t>1,\;x\in G,\;\;p(t,x,x)\le m(t)\simeq (t\log t)^{-D/2}$.
\end{enumerate}
Then there exists $C$ such that, for all $t>1$ and $x\in G$, we have
\begin{equation*}
p(t,e,x)\leq Cm(t)\min \left\{\left( \frac{t \log \|x\|}{\| x\|^2 }\right)
^{1+D /2},1\right\}
\end{equation*}
Further, for any $\gamma\in (0,2)$, there exist $C_\gamma$ such that if 
$1\le t\le \|x\|^\gamma$ then 
\begin{equation*}
p(t,e,x)\le \frac{C_\gamma}{t^{D/2}}\left( \frac{t}{\|x\|^2}\right)^{1+D/2}.
\end{equation*}
\end{pro}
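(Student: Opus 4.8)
The plan is to combine the truncated off-diagonal bound of Proposition \ref{Truncated} with Meyer's construction (see \cite{Meyer} and \cite[Lemma 3.1]{BGK}), paralleling Proposition \ref{up1}; the only genuinely new feature is the logarithmic factor carried by $\mathcal G(R)$ when $\alpha=2$. By Lemma \ref{DGrad} we have $\delta_R\simeq R^{-2}$ and $\mathcal G(R)\simeq \log R$, so $R^2/\mathcal G(R)\simeq R^2/\log R$. Meyer's construction splits the kernel as $J=J_R+J_R'$ and produces $X$ from the truncated process $X^R$ by inserting the big jumps governed by $J_R'$; on the level of densities this yields the first-jump inequality $p(t,e,x)\le p_R(t,e,x)+\int_0^t\sum_{w,z}p_R(t-s,e,w)J_R'(w,z)p(s,z,x)\,ds$, whose second term I will show contributes the single-big-jump quantity $\simeq t\,\nu_2(x)\simeq t\|x\|^{-(2+D)}$.

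For the first displayed bound I would take $R=c\|x\|$ with $c=1/(3(1+D/2))$, so that the exponent $\|x\|/3R$ in Proposition \ref{Truncated} equals exactly $1+D/2$. It suffices to treat the range $t<\|x\|^2/\log\|x\|$, since otherwise $\min\{\cdot,1\}=1$ and the trivial bound $p(t,e,x)\le p_t(e)\le m(t)$ applies. In that range $\delta_R t\lesssim t/\|x\|^2\le 1/\log\|x\|$ is bounded, so $e^{4\delta_R t}\le C$ and Proposition \ref{Truncated} gives $p_R(t,e,x)\le C\,m(t)\bigl(t\log\|x\|/\|x\|^2\bigr)^{1+D/2}$. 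A direct comparison shows the one-big-jump term $t\|x\|^{-(2+D)}$ is dominated by this expression (their ratio contains a positive power of $\log\|x\|$ throughout the range), so adding the two terms yields the stated bound.

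For the refined bound in the regime $1\le t\le\|x\|^\gamma$ with $\gamma<2$, the point is to make the truncated contribution negligible so that only the single-big-jump term $t\|x\|^{-(2+D)}=t^{-D/2}(t/\|x\|^2)^{1+D/2}$ survives. Here I would choose $R=c_\gamma\|x\|$ with $c_\gamma$ small, so that the exponent $\theta=\|x\|/3R=1/(3c_\gamma)$ is as large as we please. Since $t\le\|x\|^\gamma$ with $\gamma<2$, straightforward power counting shows that for $\theta$ large enough (depending only on $\gamma$ and $D$) one has $m(t)(t\log\|x\|/\|x\|^2)^\theta\le t\|x\|^{-(2+D)}$ once $\|x\|$ is large, the constraint $t<R^2/\mathcal G(R)$ being automatic in this range; bounded $\|x\|$ are absorbed into $C_\gamma$. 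This leaves only the one-big-jump term, giving the asserted estimate.

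The main obstacle is the sharp evaluation of the Meyer correction term: the crude bound used in Section \ref{sec-Dav}, namely $\mathbf P_e(\exists\,s\le t: X_s\ne X_s^R)\le t\delta_R\simeq t/R^2$, carries no spatial decay and is far too weak. To extract $\simeq t\,\nu_2(x)$ I would decompose the integral $\int_0^t\sum_{w,z}p_R(t-s,e,w)J_R'(w,z)p(s,z,x)\,ds$ according to the location of the big jump. On the event $\|w\|\le\|x\|/4$ and $\|z^{-1}x\|\le\|x\|/4$ the jump has size $\|w^{-1}z\|\ge\|x\|/2$, whence $\nu_2(w^{-1}z)\lesssim\|x\|^{-(2+D)}$, and conservativeness of $p_R$ and $p$ integrates this contribution to $\lesssim t\|x\|^{-(2+D)}$. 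The complementary events force either the truncated process to travel a distance $\gtrsim\|x\|$, which is controlled by the tail in Proposition \ref{Truncated} and is of strictly lower order, or the full process to do so after the jump; this last term is circular and I expect the genuine difficulty to lie here. It can be closed either by substituting the first-jump inequality once more into $p(s,z,x)$, so that two big jumps are needed to travel $\gtrsim\|x\|$ and a negligible factor $\nu_2(x)^2$ appears, or by a bootstrap argument on the quantity being estimated, as in the iteration of Meyer's construction in \cite{BGK}.
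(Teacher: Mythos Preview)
Your strategy matches the paper's: combine Proposition~\ref{Truncated} with Meyer's construction, and in particular your choice $R=\|x\|/(3(1+D/2))$ for the first bound is exactly what the paper does. The only real divergence is in how you handle the Meyer correction term, and here you are making life much harder than necessary. The first-jump inequality you write down immediately yields the cruder pointwise bound
\[
p(t,e,x)\le p_R(t,e,x)+t\,\|\nu_R'\|_\infty,
\]
obtained simply by replacing $J_R'(w,z)$ by its supremum $\|\nu_R'\|_\infty$ and using that $p_R$ and $p$ are probability kernels in the remaining variables. Since $\nu_2(g)\simeq\|g\|^{-(2+D)}$ for $\|g\|>R$, we have $\|\nu_R'\|_\infty\simeq R^{-(2+D)}$, and because $R\simeq\|x\|$ this gives directly $t\,\|\nu_R'\|_\infty\simeq t\,\|x\|^{-(2+D)}$. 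This is exactly the bound the paper uses (and it already appears verbatim in the proof of Proposition~\ref{up1}); no spatial decomposition of the jump location and no bootstrap is needed. For the refined estimate in the range $t\le\|x\|^\gamma$, the paper chooses $R\simeq\|x\|$ so as to \emph{balance} the truncated term against $t/R^{2+D}$, rather than making the truncated term negligible by taking a large exponent as you propose; either method works and leads to the same conclusion.
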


\begin{proof}[Proof of Proposition \ref{up1}]
Under the stated hypothesis, we have $\delta_R\simeq R^{-\alpha}$ and  $\mathcal G(R)\simeq R^{2-\alpha}$ and, for $1\le t\le \eta R^{\alpha}$ (with $\eta$ 
to be fixed later, small enough),   
Proposition\ref{Truncated} gives
\[p_{R}(t,e,x)\leq Cm(t)\left( \frac{t}{\| x\|^{\alpha}}\right) ^{\|x\|/3R}.\] 
By Meyer's construction, we have 
\begin{eqnarray*}
p(t,x,y)&\leq & p_{R}(t,x,y)+t\left\Vert \nu _{R}^{\prime }\right\Vert _{\infty
}\\
&\le& C_1\frac{1}{t^{D/\alpha}\ell_1(t^{1/\alpha})}
\left( \frac{t}{R^{\alpha}}\right) ^{\|x\|/3R}
+\frac{t}{R^{\alpha(1+D/\alpha)} \ell_1(R) }.
\end{eqnarray*}
Choose $R=R(x,t)$ such that the two terms of the sum on the left-hand side are 
essentially equal, namely, set
$$\left(\log \frac{R^\alpha}{t}\right) 
\frac{\|x\|}{3R} = \left(\log \frac{R^\alpha}{t}\right)\left(1+\frac{D}{\alpha}\right) +\log \frac{\ell_1(R)}{\ell_1(t^{1/\alpha})}.$$ 
As long as $\eta$ is small enough, this choice of $R$ gives
$\|x\|\simeq  R$ and
$$
p(t,x,y)\leq \frac{2t}{\| x\|^{\alpha(1+D/\alpha)} \ell_1(\|x\|) }
\simeq  
\frac{1}{t^{D/\alpha}\ell_1(t^{1/\alpha})} 
\left( \frac{t}{\| x\|^{\alpha}}\right) ^{1+D/\alpha}\frac{\ell_1(t^{1/\alpha})}{\ell_1(\|x\|)}.$$
For any $t$ (in particular, $t\geq \eta R^\alpha$) 
we can also use $m(t)$ for an easy upper bound. 
This gives
$$
p(t,e,x) \leq  C m(t)\min 
\left\{ \left( \frac{t}{\left\Vert x\right\Vert^\alpha
}\right) ^{1+D/\alpha }
\frac{\ell_1(t^{1/\alpha})}{\ell_1(\|x\|)}
,1\right\}$$
or, equivalently,
$$p(t,e,x) \leq  C \min 
\left\{t\nu_\alpha(\|x\|)
,m(t)\right\}.$$
\end{proof}

\begin{proof}[Proof of Proposition \ref{up2}]
In the context of proposition \ref{up2}, we have $\delta_R\simeq R^2$
and $\mathcal G(R)\simeq \log R$.  For $1\le t\le \eta R^2$, $\eta>0$ small enough,
Proposition \ref{Truncated} and Meyer's decomposition gives
\begin{eqnarray*}
p(t,x,y)&\leq & p_{R}(t,x,y)+t\left\Vert \nu _{R}^{\prime }\right\Vert _{\infty
}\\
&\le& C(t\log t)^{-D/2}\left( \frac{t \log R}{R^{2}}\right) ^{\|x\|/3R}
+ \frac{t}{R^{2+D}}.
\end{eqnarray*}
If $ R^2/\log R \le t\le R^2$ then this  bound is not better than the easy
bound $p(t,x,y)\le m(t)$. By taking $R$ such that $\|x\|=3R(1+D/2)$, we obtain
$$
p(t,x,y)\leq C m(t)\min
\left\{ \left(\frac{t\log \|x\|}{\|x\|^2}\right)^{1+D/2},1\right\}.$$
However, if $1\le t\le \eta \|x\|^\gamma$ with $\gamma\in (0,2)$ and 
$\eta$ small enough, then we can 
choose $R \simeq \|x\|$ so that
$$(t\log t)^{-D/2}\left( \frac{t \log R}{R^{2}}\right) ^{\|x\|/3R}
= \frac{t}{R^{2+D}},$$
equivalently,
$$\left( \frac{R^2}{t \log R}\right) ^{\|x\|/3R}
= \left(\frac{R^{2}}{t\log R}\right)^{1+D/2} \frac{(\log R)^{1+D/2}}
{(\log t)^{D/2}}. $$
In the region $t\le \|x\|^\gamma$, this yields
$$p(t,e,x)\le  \frac{2t}{\|x\|^{2+D}}\simeq  t^{-D/2}
\left(\frac{ t}{\|x\|^2}\right)^{1+D/2} .$$
\end{proof}

\section{Random walks on wreath products} \label{sec-wp}

In this subsection, we illustrate how to use Proposition \ref{pro-StC} to
derive a lower bound for return probability of certain classes of random
walks on wreath products.

First we briefly review the definition of wreath products and a special type of
random walks on them. Our notation follows \cite{Pittet2002}. Let $H$, $K$
be two finitely generated groups. Denote the identity element of \ $K$ by $%
e_K$ and identity element of $H$ by $e_H$ Let $K_{H}$ denote the direct sum:%
\begin{equation*}
K_{H}=\sum_{h\in H}K_{h}.
\end{equation*}%
The elements of $K_{H}$ are functions $f:H\rightarrow K$, $h\mapsto f(h)=k_h$%
, which have finite support in the sense that $\{h\in H: f(h)=k_h\neq e_K\}$
is finite. Multiplication on $K_H$ is simply coordinate-wise multiplication.
The identity element of $K_H$ is the constant function $\boldsymbol{e}%
_K:h\mapsto e_K$ which, abusing notation, we denote by $e_K$. The group $H$
acts on $K_{H}$ by translation:%
\begin{equation*}
\tau _{h}f(h')=f(h^{-1}h^{\prime }),\;\;h,h^{\prime }\in H.
\end{equation*}%
The wreath product $K\wr H$ is defined to be semidirect product%
\begin{equation*}
K\wr H=K_{H}\rtimes _{\tau }H,
\end{equation*}%
\begin{equation*}
(f,h)(f^{\prime },h^{\prime })=(f \tau _{h}f^{\prime },hh^{\prime }).
\end{equation*}%
In the lamplighter interpretation of wreath products, $H$ corresponds to the
base on which the lamplighter lives and $K$ corresponds to the lamp. We
embed $K$ and $H$ naturally in $K\wr H$ via the injective homomorphisms%
\begin{eqnarray*}
k &\longmapsto &\underline{k}=(\boldsymbol{k}_{e_H},e_H), \;\;\boldsymbol{k}%
_{e_H}(e_H)= k,\; \boldsymbol{k}_{e_H}(h)=e_K \mbox{ if } h\neq e_H \\
h &\longmapsto &\underline{h}=(\boldsymbol{e}_K,h).
\end{eqnarray*}%
Let $\mu $ and $\eta $ be probability measures on $H$ and $K$ respectively.
Through the embedding, $\mu $ and $\eta $ can be viewed as probability
measures on $K\wr H.$ Consider the measure 
\begin{equation*}
q=\eta \ast \mu \ast \eta
\end{equation*}%
on $K\wr H$. This is called the switch-walk-switch measure on $K\wr H$ with
switch-measure $\eta$ and walk-measure $\mu$.

Let $(X_{i})$ be the random walk on $H$ driven by $\mu ,$ and let $l(n,h)$
denote the number of visits to $h$ in the first $n$ steps:%
\begin{equation*}
l(n,h)=\#\{i:0\leq i\leq n,\text{ }X_{i}=h\}.
\end{equation*}%
Set also 
\begin{equation*}
l^{g}_*(n,h) =\left\{%
\begin{array}{ll}
l(n,h) & \mbox{ if } h\not\in \{ e_H,g\} \\ 
l(n,e_H)-1/2 & \mbox{ if } h=g \\ 
l(n,e_H)-1 & \mbox{ if } h=e_H.%
\end{array}
\right.
\end{equation*}

From \cite{Pittet2002}, probability that the random walk on $K\wr H$ driven
by $q$ is at $(h,g)\in K\wr H$ at time $n$ is given by 
\begin{equation*}
q^{(n)}((f,g))=\mathbf{E}\left( \prod\limits_{h\in H}\eta
^{(2l^g_*(n,h))}(f(h))\boldsymbol{1}_{\{X_{n}=g\} }\right)
\end{equation*}%
Note that $\mathbf{E}$ stands for expectation with respect to the random
walk $(X_i)_0^\infty$ on $H$ started at $e_H$.

From now on we assume that $\eta$ satisfies $\eta(e_K)=\epsilon >0$ so that 
\begin{equation*}
\epsilon \eta^{(n-1)}(e_K) \le \eta^{(n)}(e_K)\le \epsilon^{-1}
\eta^{(n-1))}(e_K).
\end{equation*}
Write $f\overset{C}{\asymp} g$ if $C^{-1}f\le g\le Cf$. Under these
circumstances, we have 
\begin{equation*}
q^{(n)}((\boldsymbol{e}_K,g)) \overset{1/\epsilon^{3}}{\asymp} \mathbf{E}%
\left( \prod\limits_{h\in H}\eta ^{(2l(n,h))}(e_K)\boldsymbol{1}%
_{\{X_{n}=g\} }\right)
\end{equation*}
so that we can essentially ignore the difference between $l$ and $l_*$.

Set%
\begin{equation*}
F_{K}(n):=-\log \eta ^{(2n)}(e_K)
\end{equation*}%
so that, for any $g\in H$, 
\begin{equation}  \label{trick}
q^{(n)}((\boldsymbol{e}_K,g))\simeq \mathbf{E}\left( e^{
-\sum_{H}F_{K}(l(n,h))}\boldsymbol{1}_{\{X_{n}=g\}}\right).
\end{equation}

\begin{pro}
\label{WreathLower} Let $H$ be a finitely generated group equipped with a
symmetric measure $\mu$ with $\mu(e_H)>0$. Let $K$ be a finitely generated
group equipped with a symmetric measure $\eta$ with $\eta(e_K)>0$ . Let $%
\|\cdot\|$ be a norm with volume function $V$. Let $r$ be a positive
continuous increasing function. Assume that:

\begin{enumerate}
\item The measure $\mu$ is strongly $(\|\cdot\|,r)$-controlled and $V$
satisfies $V(t)\simeq t^D$.

\item The function $r$ satisfies $r(t)= t^{1/\beta} \ell_1(t)$ where $\ell_1$
is a positive continuous  slowly varying function.

\item The function $F_K(n)= -\log \eta^{(2n)}(e_K)\simeq n^\gamma \ell_2(n)$
where $\gamma\in [0,1)$ and $\ell_2$ is a positive continuous slowly varying function.
\end{enumerate}

Assume also that the slowly varying functions $\ell_i$, $i=1,2$, are such
that $\ell_i(t^a)\simeq \ell_i(t)$ for all $a>0$. Then the
switch-walk-switch measure $q$ on $K\wr H$ associated with the pair $\eta$, $%
\mu$ satisfies 
\begin{equation*}
q ^{(n)}(e)\ge \exp \left( -C n^{\frac{D(1-\gamma)+\gamma\beta}{%
D(1-\gamma)+\beta}} \ell_1(n)^{\frac{\beta D(1-\gamma)}{D(1-\gamma)+\beta}}
\ell_2(n)^{\frac{\beta}{D(1-\gamma)+\beta}} \right).
\end{equation*}
\end{pro}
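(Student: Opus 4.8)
The plan is to obtain the lower bound by restricting the expectation in (\ref{trick}) to a single well-chosen event on the base random walk $(X_i)$ on $H$, and then optimizing a free parameter that balances the entropic cost of confining the walk against the lamp-configuration cost. The starting point is the identity
$$q^{(n)}(e) \simeq \mathbf{E}\left(e^{-\sum_{h\in H} F_K(l(n,h))}\mathbf{1}_{\{X_n=e_H\}}\right),$$
so it suffices to produce \emph{one} favorable scenario that already carries enough probability. The natural strategy is to force the walk $(X_i)$ to stay inside a ball $B(e,\rho)=\{\|h\|\le\rho\}$ of some radius $\rho$ (to be optimized) for the entire time interval $[0,n]$ and to return to $e_H$ at time $n$. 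On that event the local times $l(n,h)$ are supported on the $V(\rho)\simeq \rho^D$ points of the ball, so by convexity of $F_K$ (Jensen applied to $\sum_h l(n,h)=n$ spread over $V(\rho)$ sites) the exponent $\sum_h F_K(l(n,h))$ is minimized, up to constants, when the local time is roughly equidistributed, giving
$$\sum_{h} F_K(l(n,h)) \lesssim V(\rho)\, F_K\!\left(\frac{n}{V(\rho)}\right) \simeq \rho^D \left(\frac{n}{\rho^D}\right)^\gamma \ell_2\!\left(\frac{n}{\rho^D}\right).$$

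\textbf{The confinement probability is where strong control enters.} To estimate $\mathbf{P}_{e_H}(\sup_{k\le n}\|X_k\|\le \rho;\;X_n=e_H)$ from below I would invoke Proposition \ref{pro-StC}: since $\mu$ is strongly $(\|\cdot\|,r)$-controlled with $r(t)=t^{1/\beta}\ell_1(t)$ and inverse $\rho(\tau)$ satisfying $\rho(\tau)\simeq \tau^\beta \ell_1(\tau)^{-\beta}$ (using $\ell_1(t^a)\simeq\ell_1(t)$), the strong-control estimate (\ref{strong*}) gives, for $\tau\simeq\rho$ and $\gamma_1 r(2n)\ge\tau$,
$$\mathbf{P}_{e_H}\left(\sup_{k\le n}\|X_k\|\le\gamma_2\rho;\;\|X_n\|\le\rho\right)\ge \epsilon^{1+2n/\rho(\rho/\gamma_1)} \ge \exp\!\left(-C\,\frac{n}{\rho^\beta}\,\ell_1(\rho)^{\beta}\right).$$
To upgrade ``$\|X_n\|\le\rho$'' to the pointwise ``$X_n=e_H$'' one uses $\mu(e_H)>0$ together with the strong-control lower bound $\mu^{(m)}(e_H)\gtrsim V(r(m))^{-1}$ near the origin, paying only a polynomial factor that is negligible on the exponential scale. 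Thus the restricted expectation is bounded below by $\exp(-C\,n\rho^{-\beta}\ell_1(\rho)^\beta)$ times the lamp factor $\exp(-C\rho^D(n/\rho^D)^\gamma\ell_2(\cdots))$.

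\textbf{The final step is the optimization over $\rho$.} Collecting exponents, I need to minimize
$$\Phi(\rho)= \frac{n}{\rho^\beta}\,\ell_1(\rho)^\beta \;+\; \rho^{D(1-\gamma)}n^\gamma\,\ell_2\!\left(\frac{n}{\rho^D}\right).$$
Ignoring slowly varying factors, the balance $n\rho^{-\beta}\simeq \rho^{D(1-\gamma)}n^\gamma$ forces $\rho^{\beta+D(1-\gamma)}\simeq n^{1-\gamma}$, i.e. $\rho\simeq n^{(1-\gamma)/(\beta+D(1-\gamma))}$, and substituting back yields the claimed exponent $n^{\frac{D(1-\gamma)+\gamma\beta}{D(1-\gamma)+\beta}}$. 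Reinserting the slowly varying pieces $\ell_1,\ell_2$ and using the stability property $\ell_i(t^a)\simeq\ell_i(t)$ to evaluate them at $\rho$ and at $n/\rho^D$ in terms of $\ell_i(n)$ produces the precise powers $\ell_1(n)^{\frac{\beta D(1-\gamma)}{D(1-\gamma)+\beta}}$ and $\ell_2(n)^{\frac{\beta}{D(1-\gamma)+\beta}}$ in the statement.

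\textbf{The main obstacle} I anticipate is the upper bound on the lamp cost $\sum_h F_K(l(n,h))$ on the confinement event: the equidistribution heuristic via Jensen gives the right order only for the \emph{expected} local time, whereas here I am working on a fixed favorable event and need a genuine deterministic or high-probability bound. The clean way around this is to not optimize the lamp cost at all but simply use the crude deterministic bound $\sum_h l(n,h)=n$ together with subadditivity/concavity of $F_K$ restricted to the at most $V(\gamma_2\rho)$ occupied sites, which suffices because $\gamma<1$ makes $F_K$ sublinear; the worst case is again the equidistributed one, so the estimate $\sum_h F_K(l(n,h))\lesssim V(\rho)F_K(n/V(\rho))$ holds on the whole confinement event and not merely in expectation. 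Verifying this convexity-type inequality carefully, and checking that the slowly varying factors genuinely satisfy the required stability so that they can be pulled out at the optimal scale, are the two places where the routine-looking calculation must be done with care.
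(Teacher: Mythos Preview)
Your strategy is the paper's strategy: confine the base walk to a ball of radius $\rho$, bound the lamp cost by $V(\rho)F_K(n/V(\rho))$ via concavity, control the confinement probability through Proposition~\ref{pro-StC}, and balance the two exponents. Two points where the paper's execution is cleaner than your sketch are worth noting.

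First, your wording ``convexity'' and ``minimized'' is backwards: the bound $\sum_h F_K(l(n,h))\le V(\rho)\,F_K(n/V(\rho))$ requires $F_K$ to be \emph{concave}, and equidistribution is then the \emph{maximizer}. You flag this as something to verify; the paper dispatches it in one line by extending $F_K$ to $F(x)=-\log\int_{[0,1]} t^{x}\,d\mathfrak m(t)$ via the spectral measure $\mathfrak m$ of $\eta^{(2)}$, which is automatically concave by H\"older. This makes the Jensen step a genuine deterministic inequality on the whole confinement event, exactly as you wanted in your last paragraph.

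Second, the paper avoids your ``upgrade $\|X_n\|\le\rho$ to $X_n=e_H$'' detour altogether: since $q^{(2n)}(e)\ge q^{(2n)}(x)$ for every $x\in K\wr H$, one averages over $x=(\boldsymbol e_K,g)$ with $g\in B(\tau)$ and obtains directly
\[
q^{(n)}(e)\ \ge\ \frac{1}{V(\tau)}\,\mathbf E\!\left(e^{-\sum_h F_K(l(n,h))}\,\mathbf 1_{\{\|X_n\|\le\tau\}}\right),
\]
so only the event $\{\sup_{k\le n}\|X_k\|\le\tau\}$ is needed and Proposition~\ref{pro-StC} applies without any pointwise endpoint argument. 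The extra $1/V(\tau)$ prefactor is polynomial and absorbed into the exponential. Your optimization and handling of the slowly varying factors then match the paper's.
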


\begin{proof} 
Let $\mathfrak{m}$ be the spectral measure of $\eta^{(2)}$ in the sense that
\[
\int_{\lbrack 0,1]}t^{n}d\mathfrak{m}(t)=\nu ^{(2n)}(o). 
\]%
For $x\in [0,\infty)$, set \[
F(x):=-\log \int_{[0,1]}t^{x}d\mathfrak{m}(t).
\]%
Observe that $F_K(n)=F(n)$ and that $F$ is a concave function.  For $\tau>0$, let
 $B(\tau)=\{h\in H: \|h\|\le \tau\}$. Since $q^{(2n)}(e)\ge q^{(2n)}(x)$
for any $x\in K\wr H$, (\ref{trick}) yields
\begin{eqnarray*}
q^{(n)}(e) &\geq & \frac{1}{\#B(\tau)} \mathbf{E}\left( e^{
-\sum_{H}F_{K}(l(n,h))}\boldsymbol{1}_{\{\|X_n\|\le \tau\}}\right)\\
&\geq & \frac{1}{\#B(\tau)} \mathbf{E}\left( e^{
-\sum_{H}F_{K}(l(n,h))}\boldsymbol{1}_{\{\max_{1\le k\le n}\{\|X_k\|\}\le \tau\}}
\right).
\end{eqnarray*} 
Using the concavity of $F$ and the confinement of the walk $(X_n)$
on $H$ in the ball $B(\tau)$ in the last expression, this yields
$$q^{(n)}(e)\geq 
\frac{1}{V(\tau)}e^{ -V(\tau)F\left(n/V(\tau)\right) } 
\mathbf P_e\left(\max_{1\le k\le n}\{\|X_k\|\}\le \tau\right)
.$$
Let $\tau_n$ be such  that 
$$ V(\tau_n)F(n/V(\tau_n))= n/\rho(\tau_n)$$
where $\rho$ is the inverse of $r$. By our various assumption, this means
$$ (\tau_n^D/n)^{1-\gamma} \ell_2(n/\tau_n^D)
= \tau_n ^{-\beta}\ell_1(\tau_n)^{\beta}.
$$
Hence  $n\ra \tau_n$ is a regularly varying function 
of order $(1-\gamma)/(\beta+D(1-\gamma)) < 1/\beta$. 
This shows that $r(n)\gg \tau_n$ and,
since $\mu$ is strongly $(\|\cdot\|,r)$-controlled, Proposition \ref{pro-StC}
yields
$$q^{(n)}(e)\geq 
\frac{1}{V(\tau_n)}e^{ -V(\tau_n)F\left(n/V(\tau_n)\right) } 
e^{-C n/\rho(\tau_n)} \ge e^{-C_1 n/\rho(\tau_n)}
$$
and
$$\frac{n}{\rho(\tau_n)}=
n^{\frac{D(1-\gamma)+\gamma\beta}{D(1-\gamma)+\beta}} \ell_1(n)^{\frac{\beta
D(1-\gamma)}{D(1-\gamma)+\beta}}  \ell_2(n)^{\frac{\beta}{D(1-\gamma)+\beta}} .$$
This gives the stated lower bound on $q^{(n)}(e)$.
\end{proof}

\begin{rem}
The case $\gamma =1$ is excluded from this computations. It can be
treated by the same method but  $\tau _{n}$
become a slowly varying function of $n$.
\end{rem}

\begin{rem}
In the setting of Proposition \ref{WreathLower}, suppose in addition we have 
$H=\mathbb{Z}^{d}$ and $\mu $ is in the domain of attraction of an
operator-stable law $\nu $ on $\mathbb{R}^{d}$, that is there exists a
normalizing sequence $B_{n}\in GL_{d}(\mathbb{R})$ such that $B_{n}^{-1}\mu
^{\ast n}\Rightarrow \nu $. Then the lower bound in Proposition \ref%
{WreathLower} is sharp and agrees with \cite[Theorem 4.2]{SCZ-dv1}. Note
that in this case, $\det B_{n}\simeq V(r(n))$, the scaling relation in \cite[%
Theorem 4.2]{SCZ-dv1}  reads
\begin{equation*}
\frac{a_{n}\det B_{a_{n}}}{n}F_{K}\left( \frac{n}{\det B_{a_{n}}}\right)
\simeq 1
\end{equation*}%
and it agrees with $$ V(\tau_n)F(n/V(\tau_n))= n/\rho(\tau_n),$$
with $a_{n}\simeq
\rho (\tau _{n})$.
\end{rem}

\begin{exa}
Consider the symmetric probability measure $\mu $ on $\mathbb{Z}$ of the form 
\begin{equation*}
\mu (n)=\sum_{m\in \mathbb{Z}}\frac{\kappa \ell_1 (|n|)}{(1+|n|)^{1+\alpha }}
\end{equation*}%
where  and $\alpha \in (0,2)$ and $\ell_1 $ is a positive continuous 
slowly varying function satisfying $\ell_1
(t^{b})\simeq \ell_1 (t)$ for all $b>0$. We have
\begin{equation*}
\delta _{R}:=\sum_{\left\vert n\right\vert >R}\mu (n)\sim \frac{\kappa \ell_1
(R)}{\alpha R^{\alpha }}\mbox{ and }\;\;\mathcal{G}(R)=\sum_{\left\vert
n\right\vert \leq R}\left\vert n\right\vert ^{2}\mu (n)\sim \frac{\kappa }{%
2-\alpha }R^{2-\alpha }\ell_1(R).
\end{equation*}%
Therefore $R^{2}\delta _{R}/\mathcal{G}(R)\rightarrow (2-\alpha )/\alpha $.
By a classical result (see \cite{FelB5}), 
$\mu $ is in the domain of attraction of an $\alpha $-stable law on 
$\mathbb{R}$. The normalizing sequence $b_{n}$ can be chosen as the solution
to the equation $nb_{n}^{-2}\mathcal{G}(b_{n})=1$, that is $b_{n}\sim \left( 
\frac{\kappa }{2-\alpha }n\ell_1 (n)\right) ^{1/\alpha }$. Let $K$ be a finitely
generated group equipped with a symmetric measure $\eta $ with $\eta
(e_{K})>0$ . Suppose that the function $F_{K}(n)=-\log \eta
^{(2n)}(e_{K})\simeq n^{\gamma }\ell _{2}(n)$ where $\gamma \in \lbrack 0,1)$
and $\ell _{2}$ is a positive continuous  slowly varying function. Assume also that $%
\ell _{2}(t^{a})\simeq \ell _{2}(t)$ for all $a>0$. Then  \cite[Theorem 4.2]%
{SCZ-dv1} (and the remark following that statement in \cite{SCZ-dv1}) 
implies that the switch-walk-switch
measure $q$ on $K\wr H$ associated with the pair $\eta $, $\mu $ satisfies 
\begin{equation*}
-\log q^{(n)}(e)\simeq n/a_{n}\simeq n^{\frac{(1-\gamma )+\gamma \beta }{%
(1-\gamma )+\beta }}\ell_1 (n)^{\frac{(1-\gamma )}{(1-\gamma )+\beta }}\ell
_{2}(n)^{\frac{\beta }{(1-\gamma )+\beta }},
\end{equation*}%
where $a_{n}$ is computed from the scaling relation%
\begin{equation*}
\frac{a_{n}b_{a_{n}}}{n}F_{K}\left( \frac{n}{b_{a_{n}}}\right) \simeq 1.
\end{equation*}%
This agrees with the lower bound in Proposition \ref{WreathLower}. Note that,
by Proposition \ref{pro-SC}, the measure $\mu $ is strongly $(\left\vert
\cdot \right\vert ,r)$-controlled where $r(n)=\left( n\ell_1(n)\right) ^{1/\alpha
}$.
\end{exa}

\noindent{\bf Acknowledgments:} The authors thank Mathav Murugan for his comments and
 useful remarks.


\providecommand{\bysame}{\leavevmode\hbox to3em{\hrulefill}\thinspace}
\providecommand{\MR}{\relax\ifhmode\unskip\space\fi MR }
\providecommand{\MRhref}[2]{%
  \href{http://www.ams.org/mathscinet-getitem?mr=#1}{#2}
}
\providecommand{\href}[2]{#2}

\end{document}